\definecolor{astral}{RGB}{46,116,181}
\newtheorem{theorem}{Theorem}[section]
\newtheorem{lemma}[theorem]{Lemma}
\newtheorem{definition}[theorem]{Definition}
\newtheorem{example}[theorem]{Example}
\newtheorem{thm}{Theorem}[section]
\newtheorem{corollary}[thm]{Corollary}
\definecolor{darkslategray}{rgb}{0.18, 0.31, 0.31}
\definecolor{warmblack}{rgb}{0.0, 0.26, 0.26}
\definecolor{thrdfc}{rgb}{0.36, 0.54, 0.66}
\definecolor{bole}{rgb}{0.55, 0.71, 0.0}
\journal{arXiv.org}
\newcommand{\mc}{\mathcal}
\newcommand{\mb}{\mathbb}
\newcommand{\dg}{{\dagger}}
\newcommand{\m}{{*_M}}
\newcommand{\n}{{*_N}}
\newcommand{\1}{{*_1}}
\newcommand{\2}{{*_2}}
\newcommand{\3}{{*_3}}
\begin{document}

\begin{frontmatter}
\title{ \textcolor{warmblack}{\bf A note on numerical ranges of tensors}}


\author{Nirmal Chandra Rout$^{\dag a}$, Krushnachandra Panigrahy$^{\dag b}$, Debasisha Mishra{$^{\dag c}$}}

\address{$^{\dag}$ Department of Mathematics,\\
National Institute of Technology Raipur,\\
Raipur, Chhattisgarh, India.\\
\textit{E-mail$^a$}: \texttt{nrout89\symbol{'100}gmail.com}\\
 \textit{E-mail$^b$}: \texttt{kcp.224\symbol{'100}gmail.com }\\
\textit{E-mail$^c$}: \texttt{dmishra\symbol{'100}nitrr.ac.in.}}

\begin{abstract}
\textcolor{warmblack}{
Theory of numerical range and numerical radius for tensors is not studied much in the literature. In 2016, Ke {\it et al.}  [Linear Algebra Appl., 508 (2016) 100-132]  introduced first the notion of  numerical range of a tensor via the $k$-mode product. However, the convexity of the numerical range via the $k$-mode product was not proved by them.
In this paper, the notion of numerical range and numerical radius for even-order square tensors using inner product via the Einstein product are introduced first. We provide some sufficient conditions using numerical radius for a tensor to being unitary. The convexity of the numerical range is also proved. We also provide an algorithm to plot the numerical range of a tensor. Furthermore, some properties of the numerical range for the Moore--Penrose inverse of a tensor are discussed.
} 
\end{abstract}

\begin{keyword}
Tensor, Einstein product, Numerical range,  Numerical radius, Moore--Penrose inverse.
\end{keyword}

\end{frontmatter}

\section{Introduction}\label{sec1}
The concepts of numerical range and numerical radius have been studied extensively over the last few decades. This is because they are very useful in studying and understanding the role of matrices and operators \cite{bonsall1971, bonsall1973, halmos1967, horn1991} in applications such as numerical analysis and differential equations \cite{axelsson1994, cheng1999, eiermann1993, fiedler1995, goldberg1982, kirkland2001, maroulas2002, psarrakos2002}. The numerical radius is frequently employed as a more reliable indicator of the rate of convergence of iterative methods than the spectral radius \cite{axelsson1994, eiermann1993}. Recently, tensor numerical ranges have been introduced by Ke {\it et al.} \cite{ke2016} on the basis of tensor inner products and tensor norms via $k$-mode product. These have the same properties as those of the numerical ranges of matrices, except the normality, projection, and unitary invariance properties. 

The numerical range is a set of complex numbers associated with a given $n\times n$ matrix $A$:
\begin{equation}\label{eq:nrmtrx}
    W(A)=\{\left\langle Ax,x\right\rangle:\;x\in\mb{C}^{n},\|x\|=1\},
\end{equation}
where $\left\langle x,y\right\rangle=y^{*}x$ for $x,y\in \mb{C}^{n}$ and $\|x\|=\left\langle x,x\right\rangle^{1/2}$. Note that the notion of the numerical range of a matrix is applicable for square matrices and it uses the conjugate transpose. So, to extend the notion of the numerical range of matrices to tensor case, we need a square tensor and the notion of tensor transpose.

Tensors are generalizations of scalars (that have no index), vectors (that have exactly one index), and matrices (that have exactly two indices) to an arbitrary number of indices. An $N^{th}$-order tensor is an element of $\mb{F}^{I_{1}\times \ldots \times I_{N}}$, which is the set of order $N$ complex tensors. Here $I_{1}, I_{2}, \ldots, I_{N}$ are dimensions of the first, second, $\ldots$ , $N^{th}$-mode/way, respectively. The order of a tensor is the number of modes present in it. Thus, a zero-order tensor is a scalar, a first-order tensor is a vector while a second-order tensor is a matrix. Higher-order tensors are tensors of order three or higher. If $N$ is even,  then it is an {\it even-order tensor} otherwise it is an {\it odd-order tensor}. Further, if $N=m$ and $I_{1}=I_{2}=\ldots=I_{m}=n$, then the tensor is said to be $m^{th}$-order $n$-dimensional tensor.

Higher-order tensors are denoted by calligraphic letters like $\mc{A}$. In particular, $a_{ijk}$ denotes an $(i, j, k)^{th}$ element of a third order tensor $\mc{A}$. Different parts of a third-order tensor is shown in the Figure \ref{fig:dfrntprtstnsr1}.
\begin{figure}[h!]
     \centering
     \begin{subfigure}[b]{0.2\textwidth}
         \centering
         \includegraphics[scale=0.8]{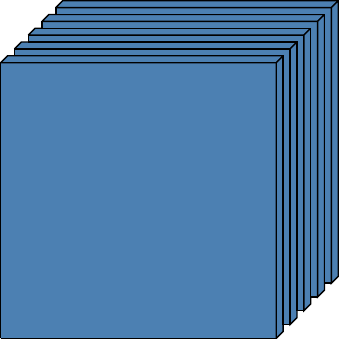}
         \caption{Frontal Slice}
         \label{fig:frntlslc}
     \end{subfigure}
     \hfill
     \begin{subfigure}[b]{0.2\textwidth}
         \centering
         \includegraphics[scale=0.8]{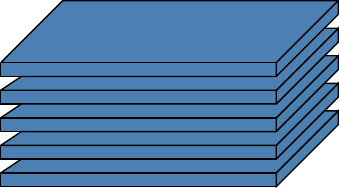}
         \caption{Horizontal Slice}
         \label{fig:hrzntlslc}
     \end{subfigure}
     \hfill
     \begin{subfigure}[b]{0.2\textwidth}
         \centering
         \includegraphics[scale=0.8]{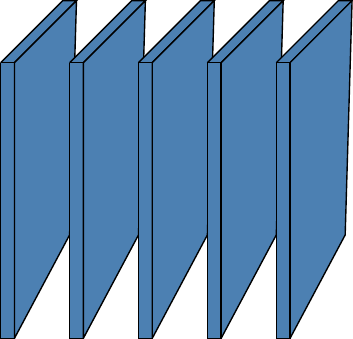}
         \caption{Lateral Slice}
         \label{fig:ltrlslc}
     \end{subfigure}
          \hfill
     \begin{subfigure}[b]{0.2\textwidth}
         \centering
         \includegraphics[scale=0.8]{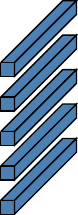}
         \caption{Tube Fiber}
         \label{fig:five over x}
     \end{subfigure}
        \caption{Different parts of a third order tensor}
        \label{fig:dfrntprtstnsr1}
\end{figure}
In particular, consider a third-order tensor of dimension $3\times 3\times 3$ as in Figure \ref{fig:thrdtnsr}. Then, there are three number of frontal slices (see Figure \ref{fig:frntlslc3t}), three number of horizontal slices (see Figure \ref{fig:hrzntlslc3t}), three number of lateral slices (see Figure \ref{fig:ltrlslc3t}) and twenty seven number of tuber fibers (see Figure \ref{fig:hrzntltbfbr3t}).\\
\begin{figure}[h!]
\begin{minipage}{0.5\textwidth}
    \includegraphics[scale=0.8]{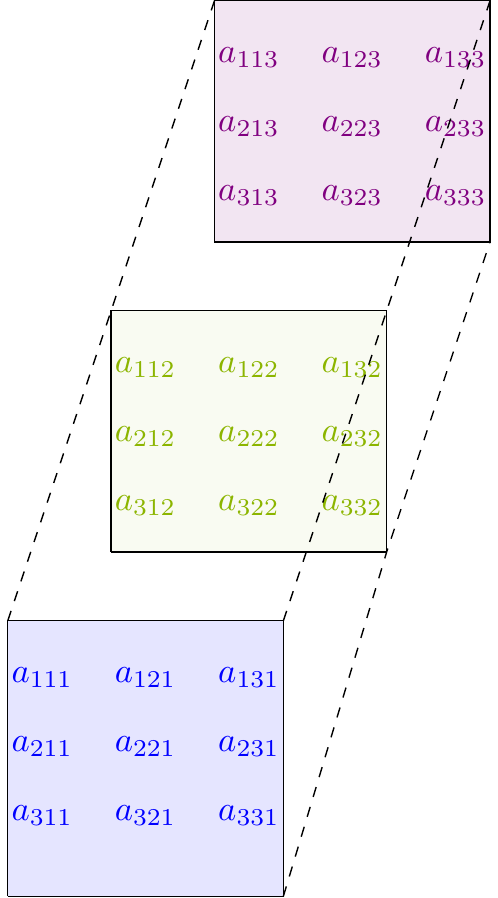}
\end{minipage}
\hspace{-2cm}
\begin{minipage}{0.5\textwidth}
\begin{tabular}{ccc|ccc|ccc}
\hline
    \multicolumn{3}{c}{$\mc{A}(:,:,1)$} & \multicolumn{3}{c}{$\mc{A}(:,:,2)$} & \multicolumn{3}{c}{$\mc{A}(:,:,3)$}  \\
    \hline
    \textcolor{blue}{$a_{111}$} & \textcolor{blue}{$a_{121}$} & \textcolor{blue}{$a_{131}$} & \textcolor{bole}{$a_{112}$} & \textcolor{bole}{$a_{122}$} & \textcolor{bole}{$a_{132}$} & \textcolor{violet}{$a_{113}$} & \textcolor{violet}{$a_{123}$} & \textcolor{violet}{$a_{133}$}\\
    \textcolor{blue}{$a_{211}$} & \textcolor{blue}{$a_{221}$} & \textcolor{blue}{$a_{231}$} & \textcolor{bole}{$a_{212}$} & \textcolor{bole}{$a_{222}$} & \textcolor{bole}{$a_{232}$} & \textcolor{violet}{$a_{213}$} & \textcolor{violet}{$a_{223}$} & \textcolor{violet}{$a_{233}$}\\
    \textcolor{blue}{$a_{311}$} & \textcolor{blue}{$a_{321}$} & \textcolor{blue}{$a_{331}$} & \textcolor{bole}{$a_{312}$} & \textcolor{bole}{$a_{322}$} & \textcolor{bole}{$a_{332}$} & \textcolor{violet}{$a_{313}$} & \textcolor{violet}{$a_{323}$} & \textcolor{violet}{$a_{333}$}\\
    \hline
\end{tabular}
\end{minipage}
    \caption{A third order tensor of dimension $3\times 3\times 3$}
    \label{fig:thrdtnsr}
\end{figure}

\begin{figure}[h!]
     \centering
     \begin{subfigure}[b]{0.3\textwidth}
         \centering
         \includegraphics[width=5.5cm,height=2cm]{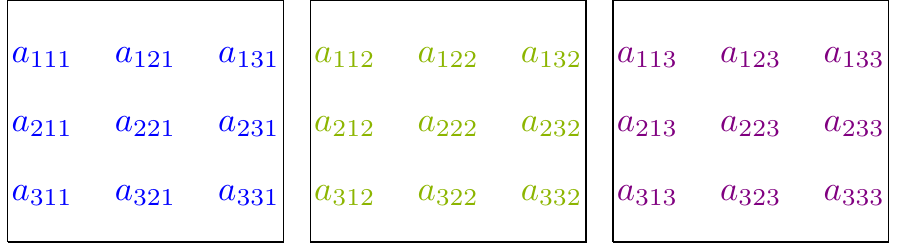}
         \caption{Frontal Slice}
         \label{fig:frntlslc3t}
     \end{subfigure}
     \hfill
     \begin{subfigure}[b]{0.3\textwidth}
         \centering
         \includegraphics[width=5.5cm,height=2cm]{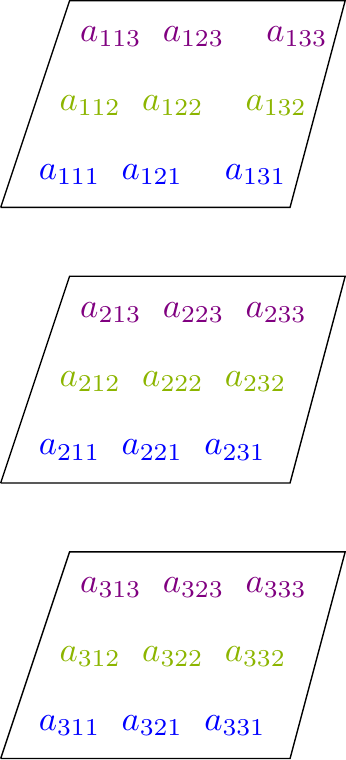}
         \caption{Horizontal Slice}
         \label{fig:hrzntlslc3t}
     \end{subfigure}
     \hfill
     \begin{subfigure}[b]{0.3\textwidth}
         \centering
         \includegraphics[width=5cm,height=2cm]{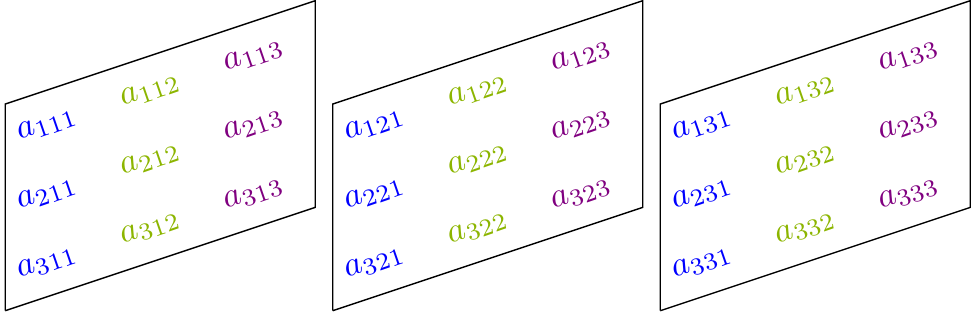}
         \caption{Lateral Slice}
         \label{fig:ltrlslc3t}
     \end{subfigure}
     \hfill
     \begin{subfigure}[b]{0.3\textwidth}
         \centering
         \includegraphics[width=5cm,height=2.5cm]{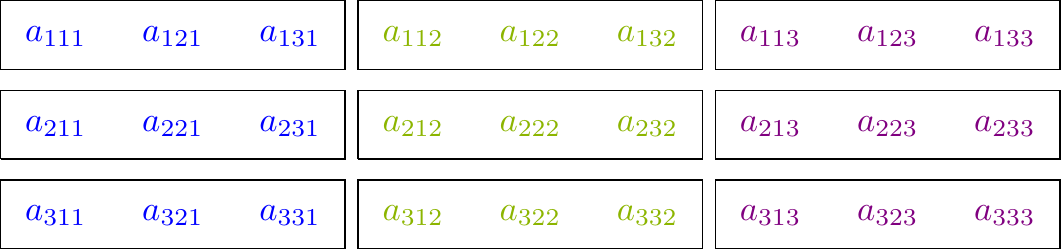}
         \label{fig:frntltbfbr}
     \end{subfigure}
     \begin{subfigure}[b]{0.3\textwidth}
         \centering
    \includegraphics[width=5cm,height=2.5cm]{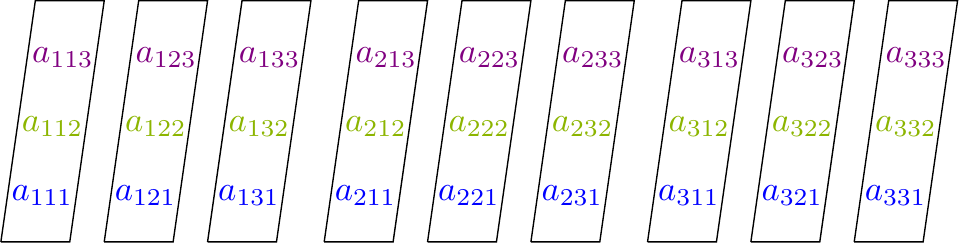}
         \caption{Tube fibers}
         \label{fig:hrzntltbfbr3t}
     \end{subfigure}
     \begin{subfigure}[b]{0.3\textwidth}
         \centering
    \includegraphics[width=5cm,height=2.5cm]{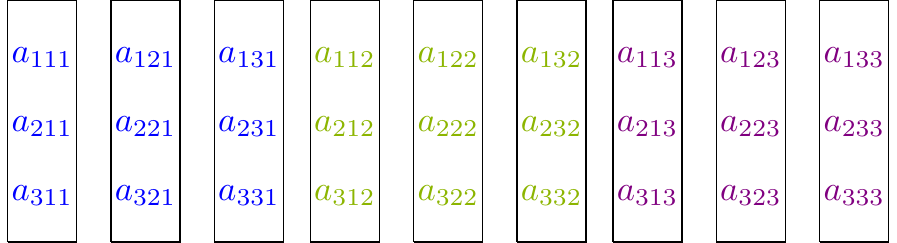}
         \label{fig:ltrltbfbr}
     \end{subfigure}
        \caption{Different parts of a third order tensor of dimension $3\times 3\times 3$}
        \label{fig:dfrntprtstnsr}
\end{figure}

For simplicity, let us denote $I_{1\ldots N}:=I_{1}\times I_{2}\times \ldots \times I_{N}$. The notation $a_{i_{1}\ldots i_{N}}$ (with $1\leq i_{j}\leq I_{j}, ~j = 1,\ldots, N$) represents an $(i_{1},\ldots,i_{N})^{th}$ element of an $N^{th}$-order tensor $\mc{A}\in \mb{F}^{I_{1 \ldots N}}$.  For a tensor $\mc{A}\in \mb{F}^{I_{1 \ldots N}}$, the notation $\mc{A}(:,:,\ldots,:,k),~k=1,2,\ldots,I_{N}$ represents a $(N-1)^{th}$-order tensor in $\mb{F}^{I_{1\ldots (N-1)}}$ which is extracted when the last index is fixed  and is called {\it frontal slice}. A {\it fiber} is identified by fixing each index except one. For a tensor $\mc{A}\in \mb{F}^{I_{1 \ldots N}}$, the notation $\mc{A}(i_{1},i_{2},\ldots,i_{N-1},:)$ represents a $1^{st}$-order tensor in $\mb{F}^{I_{N}}$ which is extracted by fixing each index except the $N^{th}$-index and is called {\it mode-N fiber}. The higher-order analogue of matrix rows and columns are the {\it fibers}.

Let $\mc{A}\in \mb{F}^{I_{1}\times I_{2}\times\ldots \times I_{M}}$ be a tensor and let $\pi$ be a permutation in $S_{M}$ except the identity permutation, where $S_{M}$ represents the permutation group over the set $\{1, 2, \ldots, M\}$, then the $\pi$-transpose of the tensor $\mc{A}$ is defined as
\begin{equation}
    \mc{A}^{T_{\pi}}=(a_{i_{\pi(1)}i_{\pi(2)}\ldots i_{\pi(M)}})\in \mb{F}^{I_{\pi(1)}\times I_{\pi(2)}\times \ldots \times I_{\pi(M)}}.
\end{equation}
Thus, there are $M!-1$ possible transposes associated with the tensor $\mc{A}\in \mb{F}^{I_{1}\times I_{2}\times \ldots \times I_{M}}$. 

\begin{example}
Consider a third-order tensor $\mc{A}\in\mb{R}^{2\times 3\times 4}$ such that 
\begin{center}
  \begin{tabular}{ccc|ccc|ccc|ccc}
\hline
\multicolumn{3}{c}{$\mc{A}(:,:,1)$} & \multicolumn{3}{c}{$\mc{A}(:,:,2)$} & \multicolumn{3}{c}{$\mc{A}(:,:,3)$} &
\multicolumn{3}{c}{$\mc{A}(:,:,4)$} \\
\hline
    \textcolor{blue}{$a_{111}$} & \textcolor{blue}{$a_{121}$} & \textcolor{blue}{$a_{131}$} & \textcolor{brown}{$a_{112}$} & \textcolor{brown}{$a_{122}$} & \textcolor{brown}{$a_{132}$} & \textcolor{violet}{$a_{113}$} & \textcolor{violet}{$a_{123}$} & \textcolor{violet}{$a_{133}$} &
    \textcolor{thrdfc}{$a_{114}$} & \textcolor{thrdfc}{$a_{124}$} & \textcolor{thrdfc}{$a_{134}$} \\     \textcolor{blue}{$a_{211}$} & \textcolor{blue}{$a_{221}$} & \textcolor{blue}{$a_{231}$} & \textcolor{brown}{$a_{212}$} & \textcolor{brown}{$a_{222}$} & \textcolor{brown}{$a_{232}$} & \textcolor{violet}{$a_{213}$} & \textcolor{violet}{$a_{223}$} & \textcolor{violet}{$a_{233}$} &
    \textcolor{thrdfc}{$a_{214}$} & \textcolor{thrdfc}{$a_{224}$} & \textcolor{thrdfc}{$a_{234}$} \\
    \hline
\end{tabular}.
\end{center}
Then, there are $3!-1(=5)$ possible transposes associated with the tensor $\mc{A}\in \mb{R}^{2\times 3\times 4}$. All permutations of three symbols except the identity are,  \begin{equation*}
\pi_{1}=\begin{pmatrix}1&2&3\\2&1&3\end{pmatrix}, \pi_{2}=\begin{pmatrix}1&2&3\\1&3&2\end{pmatrix}, \pi_{3}=\begin{pmatrix}1&2&3\\3&2&1\end{pmatrix},
\pi_{4}=\begin{pmatrix}1&2&3\\2&3&1\end{pmatrix},
\pi_{5}=\begin{pmatrix}1&2&3\\3&1&2\end{pmatrix}.
\end{equation*}
Now, the transpose of the tensor $\mc{A}$ corresponding to the permutation $\pi_{1}$ is $\mc{A}^{T_{\pi_{1}}}\in \mb{R}^{3\times 2\times 4}$ ($({i_{1},i_{2},i_{3}})$-th position element goes to $({i_{\pi_{1}{(1)}},i_{\pi_{1}{(2)}},i_{\pi_{1}{(3)}}})$-th, i.e., $(i_{2},i_{1},i_{3})$-th position):
\begin{center}
    \begin{tabular}{cc|cc|cc|cc}
\hline
\multicolumn{2}{c}{$\mc{A}^{T_{\pi_{1}}}(:,:,1)$} & \multicolumn{2}{c}{$\mc{A}^{T_{\pi_{1}}}(:,:,2)$} & \multicolumn{2}{c}{$\mc{A}^{T_{\pi_{1}}}(:,:,3)$} &
\multicolumn{2}{c}{$\mc{A}^{T_{\pi_{1}}}(:,:,4)$}
\\
        \hline
    \textcolor{blue}{$a_{111}$} & \textcolor{blue}{$a_{211}$} & \textcolor{brown}{$a_{112}$} & \textcolor{brown}{$a_{212}$} & \textcolor{violet}{$a_{113}$} & \textcolor{violet}{$a_{213}$} & \textcolor{thrdfc}{$a_{114}$} & \textcolor{thrdfc}{$a_{214}$} \\ \textcolor{blue}{$a_{121}$} &
    \textcolor{blue}{$a_{221}$} & \textcolor{brown}{$a_{122}$} & \textcolor{brown}{$a_{222}$} & \textcolor{violet}{$a_{132}$} & \textcolor{violet}{$a_{232}$} & \textcolor{thrdfc}{$a_{124}$} & \textcolor{thrdfc}{$a_{224}$} \\ \textcolor{blue}{$a_{131}$} & \textcolor{blue}{$a_{231}$} &
    \textcolor{brown}{$a_{132}$} & \textcolor{brown}{$a_{232}$} & \textcolor{violet}{$a_{133}$} & \textcolor{violet}{$a_{233}$} & \textcolor{thrdfc}{$a_{134}$} & \textcolor{thrdfc}{$a_{234}$}\\
    \hline
\end{tabular}.
\end{center}
Now, the transpose of the tensor $\mc{A}$ corresponding to the permutation $\pi_{2}$ is $\mc{A}^{T_{\pi_{2}}}\in \mb{R}^{2\times 4\times 3}$ ($(i_{1},i_{2},i_{3})$-th position element goes to $(i_{\pi_{2}{(1)}},i_{\pi_{2}{(2)}},i_{\pi_{2}{(3)}})$-th, i.e., $(i_{1},i_{3},i_{2})$-th position):
\begin{center}
    \begin{tabular}{cccc|cccc|cccc}
\hline
    \multicolumn{4}{c}{$\mc{A}^{T_{\pi_{2}}}(:,:,1)$} & \multicolumn{4}{c}{$\mc{A}^{T_{\pi_{2}}}(:,:,2)$} & \multicolumn{4}{c}{$\mc{A}^{T_{\pi_{2}}}(:,:,3)$}  \\
    \hline
    \textcolor{blue}{$a_{111}$} & \textcolor{brown}{$a_{112}$} & \textcolor{violet}{$a_{113}$} & \textcolor{thrdfc}{$a_{114}$} & \textcolor{blue}{$a_{121}$} & \textcolor{brown}{$a_{122}$} & \textcolor{violet}{$a_{123}$} & \textcolor{thrdfc}{$a_{124}$} & \textcolor{blue}{$a_{131}$} &
    \textcolor{brown}{$a_{132}$} & \textcolor{violet}{$a_{133}$} & \textcolor{thrdfc}{$a_{134}$} \\ \textcolor{blue}{$a_{211}$} & \textcolor{brown}{$a_{212}$} & \textcolor{violet}{$a_{231}$} & \textcolor{thrdfc}{$a_{214}$} & \textcolor{blue}{$a_{221}$} & \textcolor{brown}{$a_{222}$} & \textcolor{violet}{$a_{223}$} & \textcolor{thrdfc}{$a_{224}$} & \textcolor{blue}{$a_{231}$} &
    \textcolor{brown}{$a_{232}$} & \textcolor{violet}{$a_{233}$} & \textcolor{thrdfc}{$a_{234}$} \\
    \hline
\end{tabular}.
\end{center}
Similarly, other transposes are as follows. \\ $\mc{A}^{T_{\pi_{3}}}\in \mb{R}^{4\times 3\times 2}$ ($(i_{1},i_{2},i_{3})$-th position element goes to $(i_{\pi_{3}{(1)}},i_{\pi_{3}{(2)}},i_{\pi_{3}{(3)}})$-th, i.e., $(i_{3},i_{2},i_{1})$-th position):
\begin{center}
    \begin{tabular}{ccc|ccc}
\hline
    \multicolumn{3}{c}{$\mc{A}^{T_{\pi_{3}}}(:,:,1)$} & \multicolumn{3}{c}{$\mc{A}^{T_{\pi_{3}}}(:,:,2)$} \\
    \hline
    \textcolor{blue}{$a_{111}$} & \textcolor{blue}{$a_{121}$} & \textcolor{blue}{$a_{131}$} & \textcolor{blue}{$a_{211}$} & \textcolor{blue}{$a_{221}$} & \textcolor{blue}{$a_{231}$} \\
    \textcolor{brown}{$a_{112}$} & \textcolor{brown}{$a_{122}$} & \textcolor{brown}{$a_{132}$} & \textcolor{brown}{$a_{212}$} & \textcolor{brown}{$a_{222}$} & \textcolor{brown}{$a_{232}$}\\     \textcolor{violet}{$a_{113}$} & \textcolor{violet}{$a_{123}$} & \textcolor{violet}{$a_{133}$} & \textcolor{violet}{$a_{213}$} & \textcolor{violet}{$a_{223}$} & \textcolor{violet}{$a_{233}$}\\ \textcolor{thrdfc}{$a_{114}$} & \textcolor{thrdfc}{$a_{124}$} & \textcolor{thrdfc}{$a_{134}$} & \textcolor{thrdfc}{$a_{214}$} & \textcolor{thrdfc}{$a_{224}$} & \textcolor{thrdfc}{$a_{234}$}\\
    \hline
\end{tabular}.
\end{center}
$\mc{A}^{T_{\pi_{4}}}\in \mb{R}^{3\times 4\times 2}$ ($(i_{1},i_{2},i_{3})$-th position element goes to $(i_{\pi_{4}{(1)}},i_{\pi_{4}{(2)}},i_{\pi_{4}{(3)}})$-th, i.e., $(i_{2},i_{3},i_{1})$-th position):
\begin{center}
    \begin{tabular}{cccc|cccc}
\hline
    \multicolumn{4}{c}{$\mc{A}^{T_{\pi_{4}}}(:,:,1)$} & \multicolumn{4}{c}{$\mc{A}^{T_{\pi_{4}}}(:,:,2)$} \\
    \hline
    \textcolor{blue}{$a_{111}$} & \textcolor{brown}{$a_{112}$} & \textcolor{violet}{$a_{113}$} & \textcolor{thrdfc}{$a_{114}$} & \textcolor{blue}{$a_{211}$} & \textcolor{brown}{$a_{212}$} &
    \textcolor{violet}{$a_{213}$} & \textcolor{thrdfc}{$a_{214}$} \\ \textcolor{blue}{$a_{121}$} & \textcolor{brown}{$a_{122}$} & \textcolor{violet}{$a_{123}$} & \textcolor{thrdfc}{$a_{124}$} &     \textcolor{blue}{$a_{221}$} & \textcolor{brown}{$a_{222}$} & \textcolor{violet}{$a_{223}$} & \textcolor{thrdfc}{$a_{224}$} \\ \textcolor{blue}{$a_{131}$} & \textcolor{brown}{$a_{132}$} & \textcolor{violet}{$a_{133}$} & \textcolor{thrdfc}{$a_{134}$} & \textcolor{blue}{$a_{231}$} & \textcolor{brown}{$a_{232}$} & \textcolor{violet}{$a_{233}$} & \textcolor{thrdfc}{$a_{234}$}\\
    \hline
\end{tabular}.
\end{center}
$\mc{A}^{T_{\pi_{5}}}\in \mb{R}^{4\times 2\times 3}$ ($(i_{1},i_{2},i_{3})$-th position element goes to $(i_{\pi_{5}{(1)}},i_{\pi_{5}{(2)}},i_{\pi_{5}{(3)}})$-th, i.e., $(i_{3},i_{1},i_{2})$-th position):
\begin{center}
    \begin{tabular}{cc|cc|cc}
\hline
    \multicolumn{2}{c}{$\mc{A}^{T_{\pi_{5}}}(:,:,1)$} & \multicolumn{2}{c}{$\mc{A}^{T_{\pi_{5}}}(:,:,2)$} & \multicolumn{2}{c}{$\mc{A}^{T_{\pi_{5}}}(:,:,3)$}  \\
    \hline
    \textcolor{blue}{$a_{111}$} & \textcolor{blue}{$a_{211}$} & \textcolor{blue}{$a_{121}$} & \textcolor{blue}{$a_{221}$} & \textcolor{blue}{$a_{131}$} & \textcolor{blue}{$a_{231}$} \\     \textcolor{brown}{$a_{112}$} & \textcolor{brown}{$a_{212}$} & \textcolor{brown}{$a_{122}$} & \textcolor{brown}{$a_{222}$} & \textcolor{brown}{$a_{132}$} & \textcolor{brown}{$a_{232}$} \\     \textcolor{violet}{$a_{113}$} & \textcolor{violet}{$a_{213}$} & \textcolor{violet}{$a_{123}$} & \textcolor{violet}{$a_{223}$} & \textcolor{violet}{$a_{133}$} & \textcolor{violet}{$a_{233}$}\\
    \textcolor{thrdfc}{$a_{114}$} & \textcolor{thrdfc}{$a_{214}$} & \textcolor{thrdfc}{$a_{124}$} & \textcolor{thrdfc}{$a_{224}$} & \textcolor{thrdfc}{$a_{134}$} & \textcolor{thrdfc}{$a_{234}$} \\
    \hline
\end{tabular}.
\end{center}

\end{example}

In particular, for $\mc{A}\in \mb{F}^{I_{1}\times I_{2}\times \ldots\times I_{M}\times J_{1}\times J_{2}\times \ldots \times J_{N}}$ and $\pi\in S_{M+N}$ such that 
$\mc{A}^{T_{\pi}}=(a_{j_{1}j_{2}\ldots j_{N}i_{1}i_{2}\ldots i_{M}})\in \mb{F}^{J_{1}\times J_{2}\times \ldots \times J_{N}\times I_{1}\times I_{2}\times \ldots \times I_{M}}$, then it is simply written as $\mc{A}^{T}$. In this paper, whenever we write $\mc{A}^{H}$ or $\mc{A}^{T}$ for  $M+N$ or $2M$ order tensor, then it is always with respect to partition after $M$-modes. Furthermore, if $\mc{A}\in \mb{F}^{I_{1}\times I_{2}\times \ldots \times I_{M}}$, then $\mc{A}^{T}=(a_{1i_{1}i_{2}\ldots i_{M}})\in \mb{F}^{1\times I_{1}\times I_{2}\times \ldots \times I_{M}}$.

There are two ways to define a square tensor. One when each modes are of equal size, i.e., $n\times n\times \ldots \times n$ and another when the first $N$ modes are repeated in the same order, i.e., $I_{1}\times I_{2}\times \ldots \times I_{N}\times I_{1}\times I_{2}\times \ldots \times I_{N}$. Recently, Ke {\it et al.} \cite{ke2016} have extended the notion of the numerical range of a matrix to the former type of square tensor case. They have considered tensor numerical ranges based on inner products via $k$-mode product which may not be convex in general (see Example 1, \cite{ke2016}). Pakmanesh and Afshin \cite{pakmanesh2021}, continued the same study for even-order tensors.

The main objective of this paper is to study the numerical range of a tensor based on inner product and to study the convexity via Einstein product. The Einstein product \cite{einstein2007} $\mc{A}\n\mc{B} \in \mb{C}^{I_{1\ldots M} \times J_{1\ldots L} }$ of tensors $\mc{A} \in \mb{C}^{I_{1 \ldots M} \times K_{1 \ldots N} }$ and $\mc{B} \in \mb{C}^{K_{1\ldots N} \times J_{1\ldots L} }$   is defined by the operation $\n$ via
\begin{equation*}\label{Eins}
(\mc{A}\n\mc{B})_{i_{1}\ldots i_{M}j_{1}\ldots j_{L}}
=\displaystyle\sum_{k_{1}\ldots k_{N}}a_{{i_{1}\ldots i_{M}}{k_{1}\ldots k_{N}}}b_{{k_{1}\ldots k_{N}}{j_{1}\ldots j_{L}}}.
\end{equation*}
\begin{example}
Consider two third-order tensors $\mc{A}\in\mb{R}^{2\times\textcolor{blue}{3\times 3}}$ and $\mc{B}\in\mb{R}^{\textcolor{blue}{3\times 3}\times 2}$ such that\\[0.2cm]
\begin{minipage}{0.5\textwidth}
\begin{center}
 \begin{tabular}{c c c | c c c | c c c}
\hline
  \multicolumn{3}{c}{$\mc{A}(:,:,1)$} & \multicolumn{3}{c}{$\mc{A}(:,:,2)$} & \multicolumn{3}{c}{$\mc{A}(:,:,3)$} \\
\hline
4 & -5 & 4 & 6 & 3 & 1 & 3 & 2 & 3 \\
1 & 3 & 1 & 2 & 4 & 7 & 2 & 1 & 3\\
\hline
\end{tabular};
\end{center}
\end{minipage}
\begin{minipage}{0.5\textwidth}
\begin{center}
 \begin{tabular}{c c c | c c c}
\hline
  \multicolumn{3}{c}{$\mc{B}(:,:,1)$} & \multicolumn{3}{c}{$\mc{B}(:,:,2)$} \\
\hline
1 & 1 & 4 & 4 & 3 & 1 \\
2 & 4 & 3 &  -4 & 0 & 2 \\
2 & 3 & 1 & 0 & 0 & 1 \\
\hline
\end{tabular}.
\end{center}
\end{minipage}\\[0.2cm]
Then, there are two possible Einstein product between the tensors $\mc{A}$ and $\mc{B}$, namely, $\mc{A}\1\mc{B}\in \mb{R}^{2\times 3\times 3\times 2}$ and $\mc{A}\2\mc{B}\in \mb{R}^{2\times 2}$. Let $\mc{C}=\mc{A}\1\mc{B}$ and $\mc{D}=\mc{A}\2\mc{B}$. Then, the tensors $\mc{C}$ and $\mc{D}$ becomes 

\begin{center}
 \begin{tabular}{c c c | c c c | c c c | c c c | c c c | c c c}
\hline
  \multicolumn{3}{c}{$\mc{C}(:,:,1,1)$} & \multicolumn{3}{c}{$\mc{C}(:,:,2,1)$} & \multicolumn{3}{c}{$\mc{C}(:,:,3,1)$} &
   \multicolumn{3}{c}{$\mc{C}(:,:,1,2)$} & \multicolumn{3}{c}{$\mc{C}(:,:,2,2)$} & \multicolumn{3}{c}{$\mc{C}(:,:,3,2)$} \\
\hline
22 & 5 & 12 & 37 & 13 & 17 & 37 & -9 & 22 & -8 & -32 & 12 & 12 & -15 & 12 & 19 & 3 & 9\\
9 & 13 & 21 & 15 & 22 & 38 & 12 & 25 & 28 & -4 & -4 & -24 & 3 & 9 & 3 & 7 & 12 & 18\\
\hline
\end{tabular};
\end{center}
and 
\begin{center}
 \begin{tabular}{c c}
\hline
  \multicolumn{2}{c}{$\mc{D}(:,:)$} \\
\hline
44 & 64 \\
62 &  5 \\
\hline
\end{tabular}.
\end{center}
\end{example}
The associative law for the Einstein product holds. In the above formula, if $\mc{B} \in \mb{C}^{K_{1\ldots N}}$, then $\mc{A}\n\mc{B} \in \mb{C}^{I_{1 \ldots M}}$ and 
\begin{equation*}
(\mc{A}\n\mc{B})_{i_{1}\ldots i_{M}} = \displaystyle\sum_{k_{1}\ldots k_{N}}
a_{{i_{1}\ldots i_{M}}{k_{1}\ldots k_{N}}}b_{{k_{1}\ldots k_{N}}}.
\end{equation*}
This product is  used in the study of the theory of relativity \cite{einstein2007} and in the area of continuum mechanics \cite{lai2009}. Let $A\in \mb{R}^{m \times n}$ and $B\in \mb{R}^{n \times l}$. Then the Einstein product $\1$  reduces to the standard matrix multiplication as
$$(A\1B)_{ij}= \displaystyle\sum_{k=1}^{n} a_{ik}b_{kj}.$$

Now, a natural question is that why do study numerical range of tensor based on inner products via Einstein product? We next pointed out some of the reasons for this. 
\begin{itemize}
    \item The set of invertible tensors \cite{brazell2013} forms a group under the Einstein product.
    \item The tensor formulation via the Einstein product preserves the low-rank structure in the solution and the right-hand side. Such as, in high-dimensional Poisson problem, the solution and the right-hand side, both represented as $n\times n\times \ldots \times n$ data arrays.
    \item The matrix unfolding of a tensor may give rise to larger bandwidths than the original tensor which increases the number of operations and storage locations. For example, the Laplacian matrices in high dimensions have larger bandwidths than the Laplacian tensors.
\end{itemize}
\noindent 
We refer \cite{huang2020}, to know more advantages of studying theory of tensors via the Einstein product.

We defined the numerical range of tensor via the Einstein product by intending that it may contain the tensor eigenvalues defined in the sense of Definition 2.3 of \cite{liang2019}. In 2019, Liang and Zheng \cite{liang2019}  introduced the notion of eigenvalue of a tensor via the Einstein product as following.
\begin{definition}[Definition 2.3, \cite{liang2019}]\label{egndfn}\leavevmode\\
Let $\mc{A}\in \mb{C}^{I_{1 \ldots N}\times I_{1 \ldots N}}$ be a given tensor. If a complex number $\lambda$ and a nonzero tensor $\mc{X}\in \mb{C}^{I_{1 \ldots N}}$ satisfy 
    \begin{equation} 
    \mc{A}\n \mc{X}=\lambda \mc{X},
    \end{equation} 
then we say that $\lambda$ is an eigenvalue of $\mc{A}$, and $\mc{X}$ is the eigentensor with respect to $\lambda$.
\end{definition}
The set of all the eigenvalues of $\mc{A}$ is denoted by $\sigma(\mc{A})$. The spectral radius of tensor $\mc{A}$, denoted by $\rho(\mc{A})$, is defined as
$\rho(\mc{A})=\max\{|\lambda|: \lambda \text{ is an eigenvalue of } \mc{A}\}.$ 

As in case of matrix, the numerical ranges contain the eigenvalues, so the study of the numerical ranges is useful in designing fast algorithms for the calculation of its eigenvalues. The reason for this is revealed in the following well known theorem and corollary (for proofs and discussion see \cite{berger1965, goldberg1982, halmos1967, horn1991, pearcy1960}). 

\begin{theorem}
Let $w(A)=\max\{|z|:\,z\in W(A)\}$ for a matrix $ A\in\mathcal{M}_n(\mb{C})$. Then,
\begin{enumerate}[i)]
    \item $(1/2) \Vert A \Vert _2 \le w(A) \le \Vert A \Vert _2 ,\,$ where $ \Vert \cdot \Vert _2$ denotes the spectral (operator) norm induced on $ \mathcal{M}_n (\mb{C})$.
    \item For every positive integer $m$, $ w(A^{m}) \le w(A)^{m}$.
\end{enumerate}
\end{theorem}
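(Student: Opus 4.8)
The plan is to handle the two assertions separately, since (i) is elementary and self-contained whereas (ii) is the genuinely delicate \emph{power inequality}.

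For the upper bound in (i) I would simply invoke the Cauchy--Schwarz inequality: for any unit vector $x$ we have $|\langle Ax,x\rangle|\le \|Ax\|\,\|x\|=\|Ax\|\le \|A\|_2$, and taking the maximum over $\|x\|=1$ gives $w(A)\le\|A\|_2$. For the lower bound $(1/2)\|A\|_2\le w(A)$ I would recover an arbitrary off-diagonal coefficient $\langle Ax,y\rangle$ from diagonal values via the polarization identity
\[
4\langle Ax,y\rangle=\langle A(x+y),x+y\rangle-\langle A(x-y),x-y\rangle+i\langle A(x+iy),x+iy\rangle-i\langle A(x-iy),x-iy\rangle .
\]
Using $|\langle Au,u\rangle|\le w(A)\|u\|^2$ on each of the four terms and then collapsing the four square norms by the parallelogram law (for unit $x,y$ the pairs $\|x\pm y\|^2$ and $\|x\pm iy\|^2$ each sum to $4$) yields $|\langle Ax,y\rangle|\le 2w(A)$. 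Since $\|A\|_2=\max_{\|x\|=\|y\|=1}|\langle Ax,y\rangle|$, this is exactly $\|A\|_2\le 2w(A)$.

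For (ii) I would first reduce by homogeneity: if $w(A)=0$ then $A=0$ and the claim is trivial, otherwise replace $A$ by $A/w(A)$, so it suffices to prove $w(A)\le 1\Rightarrow w(A^m)\le 1$. Rotating $A\mapsto e^{i\theta}A$ (which preserves $w$) reduces this to showing $\mathrm{Re}\,\langle A^m x,x\rangle\le 1$ for every unit $x$. I would then assemble two structural facts. First, $w(A)\le 1$ is equivalent to $\mathrm{Re}(\lambda A)\le I$ for all $|\lambda|\le 1$, hence $\mathrm{Re}(I-\lambda A)\ge 0$; for $|\lambda|<1$ one has $\mathrm{Re}(I-\lambda A)\ge(1-|\lambda|)I>0$, and the congruence identity $\mathrm{Re}(B^{-1})=B^{-1}\,\mathrm{Re}(B)\,(B^{-1})^{*}$ shows $\mathrm{Re}\big[(I-\lambda A)^{-1}\big]\ge 0$. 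Second, the roots-of-unity partial fraction $\tfrac{1}{1-t^m}=\tfrac1m\sum_{k=0}^{m-1}\tfrac{1}{1-\omega^{k}t}$ with $\omega=e^{2\pi i/m}$, applied to the commuting operator $t=zA$, gives
\[
(I-z^{m}A^{m})^{-1}=\frac1m\sum_{k=0}^{m-1}(I-\omega^{k}zA)^{-1},
\]
so that $\mathrm{Re}\,\langle (I-z^{m}A^{m})^{-1}x,x\rangle\ge 0$ for $|z|<1$. Writing $u=z^{m}$, the scalar function $u\mapsto\langle(I-uA^{m})^{-1}x,x\rangle=1+u\langle A^{m}x,x\rangle+\cdots$ is therefore a Carath\'eodory function.

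The hard part is precisely the last step. The Carath\'eodory/Herglotz coefficient estimate applied to this function only yields $|\langle A^{m}x,x\rangle|\le 2$, that is $w(A^{m})\le 2$, and this constant $2$ cannot be improved using positivity alone, since it is attained by the generic Carath\'eodory function $\tfrac{1+u}{1-u}$. Removing the spurious factor and sharpening $2$ to $1$ is exactly the content of the Berger--Pearcy power inequality, and it is where the real obstacle lies: one must exploit that the moment sequence $\langle A^{k}x,x\rangle$ comes from a single operator $A$ with $w(A)\le 1$, which is strictly stronger information than the function being Carath\'eodory. To close the argument I would follow one of the two classical routes---Berger's unitary $\rho$-dilation with $\rho=2$, realizing $A^{m}=2P_{H}U^{m}|_{H}$ for a unitary $U$ and using the additional positivity the dilation carries, or Pearcy's elementary refinement that uses the operator (rather than merely scalar) nature of the above positivity---and, this statement being classical, I would at the very least cite \cite{berger1965, pearcy1960}. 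In short, everything except this final constant-sharpening step is routine.
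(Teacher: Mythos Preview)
The paper does not supply a proof of this theorem at all: it is quoted in the introduction as a well-known classical fact, with proofs deferred to the references \cite{berger1965, goldberg1982, halmos1967, horn1991, pearcy1960}. Your proposal therefore already goes beyond what the paper does. Your argument for (i) via Cauchy--Schwarz and polarization is correct and standard; for (ii) you set up the resolvent/Carath\'eodory framework correctly, honestly flag that it only yields $w(A^m)\le 2$ without the extra Berger--Pearcy input, and then plan to invoke exactly the references the paper itself cites---so your treatment is consistent with, and in fact more informative than, the paper's.
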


\begin{corollary}
Let $w(A)=\max\{|z|:\,z\in W(A)\}$ for a matrix $ A\in\mathcal{M}_n(\mb{C})$. Then, for any positive integer $m$,
\begin{equation*}
    w(A^m)^{1/m} \le \Vert A^m \Vert _2^{1/m} \le 2^{1/m} w(A^m)^{1/m} \le 2^{1/m} w(A).
\end{equation*}
\end{corollary}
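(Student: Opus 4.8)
The plan is to derive the entire chain of inequalities as a purely formal consequence of the two assertions of the preceding Theorem, applied to $A$ and to its power $A^m$, together with the elementary observation that the map $t \mapsto t^{1/m}$ is monotone nondecreasing on $[0,\infty)$ for every positive integer $m$. Since $w(A^m)$ and $\Vert A^m \Vert_2$ are nonnegative real numbers, their $m$-th roots are well defined, and any inequality between nonnegative quantities is preserved upon taking $m$-th roots; this is the only analytic fact needed, besides the Theorem itself.

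First I would settle the leftmost inequality $w(A^m)^{1/m} \le \Vert A^m \Vert_2^{1/m}$ by applying the upper bound in part (i) of the Theorem to the matrix $A^m$ in place of $A$, which gives $w(A^m) \le \Vert A^m \Vert_2$, and then taking $m$-th roots. For the middle inequality $\Vert A^m \Vert_2^{1/m} \le 2^{1/m} w(A^m)^{1/m}$, I would invoke the lower bound in part (i), again with $A^m$ in place of $A$: from $(1/2)\Vert A^m \Vert_2 \le w(A^m)$ one obtains $\Vert A^m \Vert_2 \le 2\,w(A^m)$, and taking $m$-th roots together with the identity $(2\,w(A^m))^{1/m} = 2^{1/m} w(A^m)^{1/m}$ yields the claim. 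The rightmost inequality $2^{1/m} w(A^m)^{1/m} \le 2^{1/m} w(A)$ follows from part (ii), namely $w(A^m) \le w(A)^m$; taking $m$-th roots gives $w(A^m)^{1/m} \le w(A)$, and multiplying through by the positive constant $2^{1/m}$ completes the chain.

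I expect no substantive obstacle here, since the corollary is designed to be a direct reformulation of the Theorem. The only points requiring any care are the monotonicity of the root map, which legitimizes passing each inequality under the $m$-th root, and the arithmetic identity $(2x)^{1/m} = 2^{1/m} x^{1/m}$ used in the middle and right inequalities. Once these are noted, the three inequalities assemble into the stated chain with no further computation.
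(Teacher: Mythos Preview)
Your proof is correct: applying part (i) of the preceding Theorem to $A^m$ gives the first two inequalities, and part (ii) gives the last one, with monotonicity of $t\mapsto t^{1/m}$ justifying the passage under roots. The paper itself does not supply a proof of this corollary---it is quoted as known background with references to the literature---so there is no alternative argument to compare against; your derivation is the standard one.
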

The second objective of this paper is to develop algorithms to compute  the numerical ranges of tensors, which may be useful in designing faster algorithms for the calculation of its eigenvalues.

In 2016, Sun { \it et al.} \cite{sun2016} introduced formally a generalized inverse called the {\it Moore--Penrose inverse of an even-order tensor} via the Einstein product. The authors \cite{sun2016} then used the Moore--Penrose inverse to find the minimum-norm least-squares solution of some multilinear systems. Panigrahy and Mishra \cite{panigrahy2018}, Stanimirovic {\it et al.} \cite{stanimirovic2018},  and  Liang and Zheng \cite{liang2019} independently improved the definition of the Moore--Penrose inverse of an even-order tensor to a tensor of any order via the same product. The definition of the Moore--Penrose inverse of an arbitrary order tensor is recalled below.
\begin{definition}(Definition 1.1, \cite{panigrahy2018})\label{def:defmpi}\leavevmode\\
Let $\mc{A} \in \mb{R}^{I_{1 \ldots N} \times J_{1 \ldots M}}$. The tensor $\mc{X} \in \mb{R}^{J_{1 \ldots M} \times I_{1 \ldots N}}$ satisfying the following four tensor equations:
\begin{eqnarray}
\mc{A}\m\mc{X}\n\mc{A} &=& \mc{A};\label{mpeq1}\\
\mc{X}\n\mc{A}\m\mc{X} &=& \mc{X};\label{mpeq2}\\
(\mc{A}\m\mc{X})^{H} &=& \mc{A}\m\mc{X};\label{mpeq3}\\
(\mc{X}\n\mc{A})^{H} &=& \mc{X}\n\mc{A},\label{mpeq4}
\end{eqnarray}
is defined as the \textbf{Moore--Penrose inverse} of $\mc{A}$, and is denoted by $\mc{A}^{\dg}$.
\end{definition}

The third objective of this paper is to investigate the properties of numerical range of the Moore--Penrose inverse of a tensor.

The rest of this paper is structured as follows. In Section \ref{sec: nrtnsr}, the notion of numerical range of a tensor is introduced, based on inner product via the Einstein product. Also the convexity of numerical range has been verified. In Section \ref{sec: algthmnrtnsr}, an algorithm to plot boundary of the numerical range of a tensor is derived. In Section \ref{sec:nrtnsr}, the notion of numerical radius is used to verify the unitary property of a tensor.. In Section \ref{sec:nrmpitnsr}, some properties of the numerical range of the Moore--Penrose inverse is given.

\section{Numerical range of a tensor}\label{sec: nrtnsr}
In this section, a possible extension of numerical range for a tensor via the Einstein product is introduced. Spectral containment and convexity are two important requirements in the generalization. 

For two tensors $\mc{X},\mc{Y}\in\mb{C}^{I_{1\ldots N}}$, we define an inner product $\left\langle\mc{X},\mc{Y}\right\rangle=\mc{Y}^{H}\n\mc{X}$ and a norm induced by this inner product as $\|\mc{X}\|=\left\langle\mc{X},\mc{X}\right\rangle^{1/2}$. A tensor $\mc{X}\in\mb{C}^{I_{1\ldots N}}$ is said to be a {\it unit tensor}, if $\|\mc{X}\|=1$. According to \eqref{eq:nrmtrx}, it is natural to consider the following generalization.
\begin{definition}\label{def:nrtnsr}
  The numerical range of an even-order square tensor $\mc{A}\in \mb{C}^{I_{1\ldots N}\times I_{1\ldots N}}$, denote it by $W(\mc{A})$, is defined as
\begin{eqnarray}\label{nrtnsrdfn}
    W(\mc{A})
    &=&\{\left\langle \mc{A}\n\mc{X},\mc{X}\right\rangle:\;\mc{X}\text{ is a unit tensor in }\mb{C}^{I_{1\ldots N}}\}.
\end{eqnarray}
\end{definition}
With some elementary calculation it can be shown that,
\begin{equation}\label{eqn:nraltdfn}
    W(\mc{A})=\left\{\frac{\left\langle \mc{A}\n\mc{X},\mc{X}\right\rangle}{\|\mc{X}\|^{2}}:\;\mc{O}\neq\mc{X}\in\mb{C}^{I_{1\ldots N}}\right\}.
\end{equation} 
Note that, in the above Definition \ref{def:nrtnsr} when $N=1$, it coincides with the matrix numerical range defined in \eqref{eq:nrmtrx}. The spectrum of a tensor is always contained in it's numerical range is shown in the next theorem.
\begin{theorem}\label{thm:subset}
  The spectrum of $\mc{A}$ always lies in $ W(\mc{A})$, i.e., $\sigma(\mc{A})\subseteq W(\mc{A})$.
\end{theorem}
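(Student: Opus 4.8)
The plan is to imitate the classical matrix argument: every eigenvalue of $\mc{A}$ should be realized directly as a value $\left\langle \mc{A}\n\mc{X},\mc{X}\right\rangle$ for a suitably normalized eigentensor $\mc{X}$, and hence lie in $W(\mc{A})$. So fix $\lambda\in\sigma(\mc{A})$. By Definition \ref{egndfn} there is a nonzero tensor $\mc{X}\in\mb{C}^{I_{1\ldots N}}$ satisfying $\mc{A}\n\mc{X}=\lambda\mc{X}$.

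First I would normalize. Because the Einstein product distributes over scalar multiplication, scaling $\mc{X}$ by any nonzero scalar again produces an eigentensor for the same $\lambda$; hence I may replace $\mc{X}$ by $\mc{X}/\|\mc{X}\|$ and assume without loss of generality that $\|\mc{X}\|=1$. Then $\mc{X}$ is a unit tensor and is therefore admissible in the defining set of $W(\mc{A})$ in \eqref{nrtnsrdfn}.

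Next I would compute the inner product directly. Using the definition $\left\langle\mc{X},\mc{Y}\right\rangle=\mc{Y}^{H}\n\mc{X}$ together with the eigenrelation,
\[
\left\langle \mc{A}\n\mc{X},\mc{X}\right\rangle=\left\langle \lambda\mc{X},\mc{X}\right\rangle=\lambda\left\langle \mc{X},\mc{X}\right\rangle=\lambda\|\mc{X}\|^{2}=\lambda,
\]
where the second equality uses that the inner product is homogeneous (linear) in its first slot, an immediate consequence of the linearity of $\n$ in its second argument. This exhibits $\lambda$ as an element of $W(\mc{A})$, and since $\lambda\in\sigma(\mc{A})$ was arbitrary, we conclude $\sigma(\mc{A})\subseteq W(\mc{A})$.

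The argument is essentially mechanical, so I do not anticipate a substantive obstacle. The only points requiring care are the two algebraic facts about the Einstein product, namely that it distributes over scalar multiples (so that normalization preserves the eigenrelation) and that the induced inner product is linear in its first argument; both follow at once from the coordinatewise definition of $\n$. The statement is thus a direct analogue of the matrix case, with these bookkeeping identities playing the role occupied by ordinary matrix--vector algebra in \eqref{eq:nrmtrx}.
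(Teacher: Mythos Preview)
Your proposal is correct and follows the standard argument that the paper would use: pick an eigentensor for $\lambda$, normalize it, and read off $\left\langle \mc{A}\n\mc{X},\mc{X}\right\rangle=\lambda$ from the eigenrelation together with linearity of the inner product in its first slot. There is nothing to add; this is the expected one-line proof.
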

Note that when $\mc{A}=\alpha \mc{I}$, we have $\sigma(\mc{A})=\{\alpha\}=W(\mc{A})$. The following example shows that $ W(\mc{A})$ contains some elements which is not in $\sigma(\mc{A})$.
\begin{example}
Consider a tensor $\mc{A}\in \mb{C}^{3\times2\times 3\times 2}$ such that
\begin{center}
{\scriptsize\begin{tabular}{cc|cc|cc|cc|cc|cc}
\hline
\multicolumn{2}{c}{$\mc{A}(:,:,1,1)$} & \multicolumn{2}{c}{$\mc{A}(:,:,1,2)$} & \multicolumn{2}{c}{$\mc{A}(:,:,2,1)$} & \multicolumn{2}{c}{$\mc{A}(:,:,2,2)$} & \multicolumn{2}{c}{$\mc{A}(:,:,3,1)$} & \multicolumn{2}{c}{$\mc{A}(:,:,3,2)$} \\
\hline
    2 & 0 & 0 & 1 & 0 & 0 & 0 & 0 & 0 & 0 & 0 & 0 \\
    0 & 0 & 0 & 0 & 3 & 0 & 0 & -1 & 0 & 0 & 0 & 0 \\
    0 & 0 & 0 & 0 & 0 & 0 & 0 & 0 & 8 & 0 & 0 & 9 \\
\hline
\end{tabular}}.
\end{center}
Here, $\sigma(\mc{A})=\{-1,1,2,3,8,9\}$. Let $\mc{X}=\begin{bmatrix}1/\sqrt{6}&1/\sqrt{6}\\1/\sqrt{6}&1/\sqrt{6}\\1/\sqrt{6}&1/\sqrt{6}\end{bmatrix}\in\mb{R}^{3\times 2}$. Then, $\|\mc{X}\|=1$ and hence $\left\langle \mc{A}\n\mc{X},\mc{X}\right\rangle=6\in W(\mc{A})$. But, $6\notin \sigma(\mc{A})$.
\end{example}
\begin{theorem}\label{thm:lnrprprty}
  Let $ \mc{A}\in\mb{C}^{I_{1\ldots N}\times I_{1\ldots N}}$ and $ \alpha,\beta \in \mb{C}$. Then $ W( \alpha \mc{A} + \beta \mc{I}) = \alpha W(\mc{A}) + \beta $. 
\end{theorem}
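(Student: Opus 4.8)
The plan is to realize the set equality through a single affine substitution applied uniformly to every unit tensor, thereby obtaining both inclusions at once. First I would fix an arbitrary unit tensor $\mc{X}\in\mb{C}^{I_{1\ldots N}}$ and compute the quantity $\langle(\alpha\mc{A}+\beta\mc{I})\n\mc{X},\mc{X}\rangle$ that defines a typical element of $W(\alpha\mc{A}+\beta\mc{I})$ according to Definition \ref{def:nrtnsr}.

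The computation rests on two structural facts. From the defining summation formula, the Einstein product is bilinear, so $(\alpha\mc{A}+\beta\mc{I})\n\mc{X}=\alpha(\mc{A}\n\mc{X})+\beta(\mc{I}\n\mc{X})$; since $\mc{I}$ is the identity tensor for $\n$, we have $\mc{I}\n\mc{X}=\mc{X}$, and this reduces to $\alpha(\mc{A}\n\mc{X})+\beta\mc{X}$. Next, the inner product $\langle\cdot,\cdot\rangle=\mc{Y}^{H}\n\mc{X}$ is linear in its first argument, which yields
\begin{equation*}
\langle(\alpha\mc{A}+\beta\mc{I})\n\mc{X},\mc{X}\rangle=\alpha\langle\mc{A}\n\mc{X},\mc{X}\rangle+\beta\langle\mc{X},\mc{X}\rangle.
\end{equation*}
Because $\mc{X}$ is a unit tensor, $\langle\mc{X},\mc{X}\rangle=\|\mc{X}\|^{2}=1$, so the right-hand side equals $\alpha\langle\mc{A}\n\mc{X},\mc{X}\rangle+\beta$.

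To conclude, I would let $\mc{X}$ range over all unit tensors. By Definition \ref{def:nrtnsr}, the values $\langle\mc{A}\n\mc{X},\mc{X}\rangle$ sweep out exactly $W(\mc{A})$, while the identity above shows the matching value in $W(\alpha\mc{A}+\beta\mc{I})$ is precisely $\alpha\langle\mc{A}\n\mc{X},\mc{X}\rangle+\beta$. Consequently every element of $W(\alpha\mc{A}+\beta\mc{I})$ has the form $\alpha z+\beta$ with $z\in W(\mc{A})$, and conversely every such $\alpha z+\beta$ is attained by the same unit tensor that witnesses $z$. This establishes $W(\alpha\mc{A}+\beta\mc{I})=\alpha W(\mc{A})+\beta$.

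I expect no serious obstacle: the statement is the tensor analogue of the affine-invariance property of the classical matrix numerical range, and the only points meriting care are the bilinearity of $\n$ over tensor addition and scalar multiplication, together with the fact that $\mc{I}$ acts as the identity for $\n$ on $\mb{C}^{I_{1\ldots N}}$\,---\,both immediate from the summation definition. Once these are noted, the core identity is a one-line calculation that holds verbatim for every unit tensor, so the bijection $z\mapsto\alpha z+\beta$ delivers the result with no case analysis required.
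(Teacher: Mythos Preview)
Your proof is correct and follows the natural direct computation: expand $\langle(\alpha\mc{A}+\beta\mc{I})\n\mc{X},\mc{X}\rangle$ using bilinearity of $\n$ and linearity of the inner product, use $\mc{I}\n\mc{X}=\mc{X}$ and $\|\mc{X}\|=1$, and conclude by letting $\mc{X}$ range over unit tensors. This is exactly the approach the paper intends; the result is stated there as an immediate property and your argument supplies the expected one-line verification.
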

Next results shows that the numerical range of sum of two tensors $\mc{A}$ and $\mc{B}$ is always contained in sum of numerical ranges of the individual sets, i.e., $ W(\mc{A}+\mc{B}) \subseteq W(\mc{A}) + W(\mc{B})$.
\begin{theorem}
Let $\mc{A},\mc{B} \in \mb{C}^{I_{1\ldots N}\times I_{1\ldots N}}$. Then $ W(\mc{A}+\mc{B}) \subseteq W(\mc{A}) + W(\mc{B})$.
\end{theorem}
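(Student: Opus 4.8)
The plan is to establish the set inclusion by taking an arbitrary point of the left-hand side and exhibiting it explicitly as a sum of a point of $W(\mc{A})$ and a point of $W(\mc{B})$, using the \emph{same} generating unit tensor for both.

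First I would fix an arbitrary $z \in W(\mc{A}+\mc{B})$. By Definition \ref{def:nrtnsr}, there exists a unit tensor $\mc{X} \in \mb{C}^{I_{1\ldots N}}$ with $z = \left\langle (\mc{A}+\mc{B})\n\mc{X},\mc{X}\right\rangle$. The next step is to observe that the Einstein product distributes over tensor addition, i.e. $(\mc{A}+\mc{B})\n\mc{X} = \mc{A}\n\mc{X} + \mc{B}\n\mc{X}$; this is immediate from the defining summation formula of $\n$, since each entry is a sum over the contracted indices and ordinary summation is additive in its first factor.

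Then I would invoke the linearity of the inner product $\left\langle\cdot,\cdot\right\rangle$ in its first argument (which follows from $\left\langle \mc{U},\mc{Y}\right\rangle = \mc{Y}^{H}\n\mc{U}$ together with distributivity of $\n$) to split $z = \left\langle \mc{A}\n\mc{X},\mc{X}\right\rangle + \left\langle \mc{B}\n\mc{X},\mc{X}\right\rangle$. The decisive observation is that the very same unit tensor $\mc{X}$ appears in both summands, so $\left\langle \mc{A}\n\mc{X},\mc{X}\right\rangle \in W(\mc{A})$ and $\left\langle \mc{B}\n\mc{X},\mc{X}\right\rangle \in W(\mc{B})$ by Definition \ref{def:nrtnsr}. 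Hence $z \in W(\mc{A}) + W(\mc{B})$, and since $z$ was arbitrary the inclusion $W(\mc{A}+\mc{B}) \subseteq W(\mc{A}) + W(\mc{B})$ follows.

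There is no genuine obstacle here; the argument is a direct consequence of bilinearity, and the entire content is bookkeeping to confirm that distributivity of $\n$ and additivity of $\left\langle\cdot,\cdot\right\rangle$ transfer cleanly to the tensor setting. The only point worth flagging is why one obtains inclusion rather than equality: to realise a prescribed pair of values, one drawn from $W(\mc{A})$ and one from $W(\mc{B})$, as a single value of $W(\mc{A}+\mc{B})$ would require a common unit tensor generating both simultaneously, which need not exist. Thus the shared-tensor mechanism that renders the inclusion trivial is precisely what blocks the reverse inclusion.
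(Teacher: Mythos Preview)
Your proof is correct and follows the standard route: pick an arbitrary $z\in W(\mc{A}+\mc{B})$, write it as $\left\langle(\mc{A}+\mc{B})\n\mc{X},\mc{X}\right\rangle$ for a unit tensor $\mc{X}$, then split via distributivity of $\n$ and linearity of the inner product into a sum of two terms lying in $W(\mc{A})$ and $W(\mc{B})$ respectively. This is essentially the same argument the paper uses, and your closing remark on why the reverse inclusion fails is a nice addition.
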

The real part  and imaginary part of the numerical ranges of a tensor are same as the numerical range of the Hermitian part and skew-Hermitian part of that tensor, respectively. This is verified in the next theorem. `Re' and `Im' are used to denote the real and imaginary parts of a set, respectively.
\begin{theorem}
If $ H(\mc{A}) = ( \mc{A} + \mc{A}^{H} ) / 2$ and $ S(\mc{A}) = ( \mc{A} - \mc{A}^{H} ) / 2 $ are the Hermitian part and the skew-Hermitian part of $ 
\mc{A}\in\mb{C}^{I_{1\ldots N}\times I_{1\ldots N}}$, respectively, then
$\displaystyle \textup{Re}\, W(\mc{A}) = W(H(\mc{A})) $   and $\displaystyle  \textup{Im}\, W(\mc{A}) = W(S(\mc{A}))$.
\end{theorem}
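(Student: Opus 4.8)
The plan is to reduce the set-level statement to a pointwise identity at a fixed unit tensor and then let the tensor range over all unit tensors. Fix a unit tensor $\mc{X}\in\mb{C}^{I_{1\ldots N}}$ and set $z=\left\langle\mc{A}\n\mc{X},\mc{X}\right\rangle\in W(\mc{A})$. The whole argument rests on two properties of the inner product induced by the Einstein product: conjugate symmetry, $\overline{\left\langle\mc{U},\mc{V}\right\rangle}=\left\langle\mc{V},\mc{U}\right\rangle$, and the adjoint relation $\left\langle\mc{A}\n\mc{U},\mc{V}\right\rangle=\left\langle\mc{U},\mc{A}^{H}\n\mc{V}\right\rangle$.

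First I would establish these two properties. Since $\left\langle\mc{U},\mc{V}\right\rangle=\mc{V}^{H}\n\mc{U}$ is a scalar, its complex conjugate equals its conjugate transpose, so $\overline{\mc{V}^{H}\n\mc{U}}=(\mc{V}^{H}\n\mc{U})^{H}=\mc{U}^{H}\n\mc{V}=\left\langle\mc{V},\mc{U}\right\rangle$, which is conjugate symmetry. The adjoint relation follows from the associativity of $\n$ together with $(\mc{A}\n\mc{U})^{H}=\mc{U}^{H}\n\mc{A}^{H}$. Applying these to $z$ gives $\bar z=\left\langle\mc{X},\mc{A}\n\mc{X}\right\rangle=\left\langle\mc{A}^{H}\n\mc{X},\mc{X}\right\rangle$.

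With $\bar z$ in hand, the two target identities are immediate from the definitions of $H(\mc{A})$ and $S(\mc{A})$ and linearity of $\n$:
\[
\left\langle H(\mc{A})\n\mc{X},\mc{X}\right\rangle=\tfrac{1}{2}\left(z+\bar z\right)=\mathrm{Re}(z),\qquad
\left\langle S(\mc{A})\n\mc{X},\mc{X}\right\rangle=\tfrac{1}{2}\left(z-\bar z\right).
\]
Because $H(\mc{A})$ is Hermitian, the first quantity is real (indeed $W(H(\mc{A}))\subseteq\mb{R}$, since a Hermitian tensor satisfies $\bar z=z$ by the same computation), and because $S(\mc{A})$ is skew-Hermitian the second is purely imaginary. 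Now, letting $\mc{X}$ range over all unit tensors, the very same family parametrizes $W(\mc{A})$, $W(H(\mc{A}))$ and $W(S(\mc{A}))$ simultaneously; hence $W(H(\mc{A}))=\{\mathrm{Re}(z):z\in W(\mc{A})\}=\mathrm{Re}\,W(\mc{A})$, and likewise $W(S(\mc{A}))=\{\tfrac12(z-\bar z):z\in W(\mc{A})\}$, which is $\mathrm{Im}\,W(\mc{A})$.

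The one point that needs care — and the main obstacle — is the bookkeeping of the imaginary part. Since $S(\mc{A})=(\mc{A}-\mc{A}^{H})/2$ carries no factor of $i$, the set $W(S(\mc{A}))$ consists of purely imaginary numbers equal to $\tfrac12(z-\bar z)=i\,\mathrm{Im}(z)$; thus the identity $\mathrm{Im}\,W(\mc{A})=W(S(\mc{A}))$ must be read with the imaginary part of a set taken as $\{\tfrac12(w-\bar w):w\in W(\mc{A})\}$, i.e.\ the purely imaginary component rather than a real scalar. Once this convention is fixed the equality is exact. Everything else is routine: there is no subtlety beyond verifying the scalar-conjugation and associativity identities, which hold for the Einstein product on this space.
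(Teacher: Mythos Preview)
Your argument is correct and follows the standard route the paper takes: compute $\overline{\langle\mc{A}\n\mc{X},\mc{X}\rangle}=\langle\mc{A}^{H}\n\mc{X},\mc{X}\rangle$ via conjugate symmetry and the adjoint relation for the Einstein product, and then read off the Hermitian and skew-Hermitian quadratic forms as $\tfrac12(z+\bar z)$ and $\tfrac12(z-\bar z)$. Your remark about the convention for $\textup{Im}$ (purely imaginary component rather than real scalar) is exactly the point one has to keep straight here, and with that reading the proof is complete.
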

The numerical range of a tensor remains unaltered after taking it's transpose. However, the numerical ranges of conjugate transpose of a tensor is equal to that of conjugate of the tensor, which is further equal to conjugate of the numerical range of the tensor. Next, we prove this as a theorem.
\begin{theorem}
Let $ \mc{A}\in\mb{C}^{I_{1\ldots N}\times I_{1\ldots N}}$. Then $W(\mc{A}^{T})=W(\mc{A})$ and $ W(\mc{A}^{H}) = W(\overline{\mc{A}}) = \overline{W(\mc{A})}$.  
\end{theorem}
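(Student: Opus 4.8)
The plan is to establish the three claimed equalities $W(\mc{A}^{T})=W(\mc{A})$, $W(\mc{A}^{H})=W(\overline{\mc{A}})$, and $W(\overline{\mc{A}})=\overline{W(\mc{A})}$ by manipulating the defining inner product $\langle\mc{A}\n\mc{X},\mc{X}\rangle=\mc{X}^{H}\n\mc{A}\n\mc{X}$ directly at the level of scalars, exploiting the fact that each element of $W(\mc{A})$ is a single complex number obtained by contracting against a unit tensor $\mc{X}$. The guiding principle throughout is that a scalar equals its own transpose, and that conjugation commutes with the Einstein contraction in the expected way.

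First I would prove $W(\mc{A}^{T})=W(\mc{A})$. Fix a unit tensor $\mc{X}$ and consider the scalar $z=\mc{X}^{H}\n\mc{A}\n\mc{X}$. Since $z$ is a scalar, $z=z^{T}$, and I would expand $z^{T}=(\mc{X}^{H}\n\mc{A}\n\mc{X})^{T}$ using the reversal rule for the $\pi$-transpose under the Einstein product (with the partition after $N$ modes), which yields $\mc{X}^{T}\n\mc{A}^{T}\n(\mc{X}^{H})^{T}$. Observing that $(\mc{X}^{H})^{T}=\overline{\mc{X}}$ and $\mc{X}^{T}=(\overline{\mc{X}})^{H}$, this rewrites as $\overline{\mc{X}}^{H}\n\mc{A}^{T}\n\overline{\mc{X}}$, which is exactly $\langle\mc{A}^{T}\n\overline{\mc{X}},\overline{\mc{X}}\rangle$. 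Since $\|\overline{\mc{X}}\|=\|\mc{X}\|=1$, the tensor $\overline{\mc{X}}$ is again a unit tensor, so $z\in W(\mc{A}^{T})$; the map $\mc{X}\mapsto\overline{\mc{X}}$ is a bijection on unit tensors, giving the reverse inclusion and hence equality.

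For the second equality $W(\mc{A}^{H})=W(\overline{\mc{A}})$, I would note that $\mc{A}^{H}=\overline{\mc{A}^{T}}=(\overline{\mc{A}})^{T}$, so this follows immediately by applying the transpose-invariance just proved to the tensor $\overline{\mc{A}}$ in place of $\mc{A}$. For the third equality $W(\overline{\mc{A}})=\overline{W(\mc{A})}$, I would start again from a generic element $z=\mc{X}^{H}\n\mc{A}\n\mc{X}\in W(\mc{A})$ and take its complex conjugate, using that conjugation distributes over sums and products entrywise in the contraction: $\overline{z}=\overline{\mc{X}}^{H}\n\,\overline{\mc{A}}\,\n\,\overline{\mc{X}}$ after checking $\overline{\mc{X}^{H}}=(\overline{\mc{X}})^{H}$. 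Thus $\overline{z}=\langle\overline{\mc{A}}\n\overline{\mc{X}},\overline{\mc{X}}\rangle\in W(\overline{\mc{A}})$ with $\overline{\mc{X}}$ a unit tensor, giving $\overline{W(\mc{A})}\subseteq W(\overline{\mc{A}})$, and the reverse inclusion follows by the same argument applied to $\overline{\mc{A}}$ together with $\overline{\overline{\mc{A}}}=\mc{A}$.

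The main obstacle I anticipate is purely bookkeeping rather than conceptual: carefully justifying the reversal identity $(\mc{X}^{H}\n\mc{A}\n\mc{X})^{T}=\mc{X}^{T}\n\mc{A}^{T}\n(\mc{X}^{H})^{T}$ and the interaction of the $\pi$-transpose (defined here with the specific partition after $N$ modes) with the first-order tensor convention $\mc{X}^{T}\in\mb{C}^{1\times I_{1\ldots N}}$ introduced earlier, so that the dimensions align and the contractions produce genuine scalars. Once the transpose-reversal rule and the conjugation-distributivity are pinned down at the index level, the three equalities fall out mechanically; the only care needed is to confirm at each stage that $\overline{\mc{X}}$ retains unit norm, which is immediate since $\|\overline{\mc{X}}\|^{2}=\overline{\mc{X}}^{H}\n\overline{\mc{X}}=\overline{\mc{X}^{H}\n\mc{X}}=\overline{\|\mc{X}\|^{2}}=1$.
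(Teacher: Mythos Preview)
Your argument is correct. The paper states this theorem without proof, so there is nothing to compare against; your direct verification via the scalar identity $z=z^{T}$, the reversal rule for the Einstein-product transpose, and entrywise conjugation is precisely the standard route one would supply, and your bookkeeping checks (in particular $\overline{\mc{X}^{H}}=(\overline{\mc{X}})^{H}=\mc{X}^{T}$ and $\|\overline{\mc{X}}\|=\|\mc{X}\|$) are all in order.
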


\begin{theorem}\label{untrythm}
Let $\mc{B}\in \mb{C}^{I_{1\ldots M}\times J_{1\ldots N}}$  complex tensor such that $ \mc{B}^{H}\m\mc{B} = \mc{I}$. Then for any $ \mc{A}\in\mb{C}^{I_{1\ldots M}\times I_{1\ldots M}}$, we get $ W(\mc{B}^{H}\m\mc{A}\m\mc{B}) \subseteq W(\mc{A})$. Equality holds, if $M=N$ and $(I_{1},\ldots,I_{M})=(J_{1},\ldots, J_{N})$, i.e., $\mc{B}$ is unitary.
\end{theorem}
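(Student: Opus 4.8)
The plan is to prove the inclusion by transporting the test tensor through $\mc{B}$. First I would take an arbitrary element $\mu\in W(\mc{B}^{H}\m\mc{A}\m\mc{B})$. Since $\mc{B}^{H}\m\mc{B}\in\mb{C}^{J_{1\ldots N}\times J_{1\ldots N}}$ and, more to the point, $\mc{B}^{H}\m\mc{A}\m\mc{B}\in\mb{C}^{J_{1\ldots N}\times J_{1\ldots N}}$ is an even-order square tensor, Definition \ref{def:nrtnsr} supplies a unit tensor $\mc{X}\in\mb{C}^{J_{1\ldots N}}$ with $\mu=\langle(\mc{B}^{H}\m\mc{A}\m\mc{B})\n\mc{X},\mc{X}\rangle$. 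The natural candidate for exhibiting $\mu$ as a member of $W(\mc{A})$ is the transported tensor $\mc{Y}:=\mc{B}\n\mc{X}\in\mb{C}^{I_{1\ldots M}}$.

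The core computation is to rewrite the inner product. Using the associativity of the Einstein product together with the reversal rule $(\mc{B}\n\mc{X})^{H}=\mc{X}^{H}\n\mc{B}^{H}$, I would regroup
\[
\mu=\mc{X}^{H}\n\mc{B}^{H}\m\mc{A}\m\mc{B}\n\mc{X}=(\mc{X}^{H}\n\mc{B}^{H})\m\mc{A}\m(\mc{B}\n\mc{X})=\mc{Y}^{H}\m\mc{A}\m\mc{Y}=\langle\mc{A}\m\mc{Y},\mc{Y}\rangle,
\]
where the last inner product is taken in $\mb{C}^{I_{1\ldots M}}$ (contracting over all $M$ modes via $\m$). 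Next I would verify that $\mc{Y}$ is a unit tensor, which is exactly where the isometry hypothesis $\mc{B}^{H}\m\mc{B}=\mc{I}$ enters:
\[
\|\mc{Y}\|^{2}=\mc{Y}^{H}\m\mc{Y}=\mc{X}^{H}\n(\mc{B}^{H}\m\mc{B})\n\mc{X}=\mc{X}^{H}\n\mc{I}\n\mc{X}=\mc{X}^{H}\n\mc{X}=\|\mc{X}\|^{2}=1.
\]
Hence $\mu=\langle\mc{A}\m\mc{Y},\mc{Y}\rangle$ with $\mc{Y}$ a unit tensor in $\mb{C}^{I_{1\ldots M}}$, so $\mu\in W(\mc{A})$. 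As $\mu$ was arbitrary, this gives $W(\mc{B}^{H}\m\mc{A}\m\mc{B})\subseteq W(\mc{A})$.

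For the equality statement I would specialize to $M=N$ and $(I_{1},\ldots,I_{M})=(J_{1},\ldots,J_{N})$, so that $\mc{B}$ is square and $\m=\n$. In this setting $\mc{B}^{H}\m\mc{B}=\mc{I}$ together with squareness forces $\mc{B}\m\mc{B}^{H}=\mc{I}$, i.e., $\mc{B}$ is invertible with $\mc{B}^{-1}=\mc{B}^{H}$. To reverse the inclusion, given any unit tensor $\mc{Y}\in\mb{C}^{I_{1\ldots N}}$ I would set $\mc{X}:=\mc{B}^{H}\n\mc{Y}$; then $\mc{B}\n\mc{X}=\mc{B}\n\mc{B}^{H}\n\mc{Y}=\mc{Y}$, and the same computation as above (now using $\mc{B}\m\mc{B}^{H}=\mc{I}$) gives $\|\mc{X}\|=\|\mc{Y}\|=1$. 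Consequently $\langle\mc{A}\m\mc{Y},\mc{Y}\rangle=\langle(\mc{B}^{H}\m\mc{A}\m\mc{B})\n\mc{X},\mc{X}\rangle\in W(\mc{B}^{H}\m\mc{A}\m\mc{B})$, which yields $W(\mc{A})\subseteq W(\mc{B}^{H}\m\mc{A}\m\mc{B})$ and hence equality.

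The main obstacle I anticipate is bookkeeping rather than conceptual. Because the product mixes the two contraction operators $\m$ (over $M$ modes) and $\n$ (over $N$ modes), I must justify that the regrouping $(\mc{B}^{H}\m\mc{A}\m\mc{B})\n\mc{X}=\mc{B}^{H}\m\mc{A}\m(\mc{B}\n\mc{X})$ and the conjugate-transpose reversal $(\mc{B}\n\mc{X})^{H}=\mc{X}^{H}\n\mc{B}^{H}$ remain valid for these mixed orders; both follow from a direct index computation using the definition of the Einstein product, but they are the steps that demand care in matching which modes are contracted. Once these are in hand, the entire argument collapses onto the single isometry identity $\mc{B}^{H}\m\mc{B}=\mc{I}$, which is what makes $\mc{Y}=\mc{B}\n\mc{X}$ a unit tensor and drives both inclusions.
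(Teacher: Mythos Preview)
Your proposal is correct and follows the standard route: pick a unit $\mc{X}$ realizing $\mu$, push it through $\mc{B}$ to get $\mc{Y}=\mc{B}\n\mc{X}$, use $\mc{B}^{H}\m\mc{B}=\mc{I}$ to see $\|\mc{Y}\|=1$, and regroup the inner product; for equality use $\mc{B}\m\mc{B}^{H}=\mc{I}$ to run the argument backwards. This is precisely the approach the paper takes, and your remark about carefully matching the $\m$ versus $\n$ contractions is the only point requiring attention.
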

Recall the following well known result for a continuous function.
\begin{theorem}
    Let $f:A\rightarrow X\times Y$ be given by the equation $f(a)=(f_{1}(a),f_{2}(a))$. Then $f$ is continuous if, and only if, the functions $f_{1}:A\rightarrow X$ and $f_{2}:A\rightarrow Y$ are continuous.
\end{theorem}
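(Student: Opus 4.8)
The plan is to prove both implications by exploiting the interplay between $f$ and the two canonical projection maps of the product space. Write $p_{X}\colon X\times Y\to X$ and $p_{Y}\colon X\times Y\to Y$ for the projections, so that by hypothesis $f_{1}=p_{X}\circ f$ and $f_{2}=p_{Y}\circ f$. The whole argument rests on two standard facts about the product topology on $X\times Y$: that $p_{X}$ and $p_{Y}$ are themselves continuous, and that the sets of the form $U\times V$ with $U$ open in $X$ and $V$ open in $Y$ form a basis for the product topology. I would record these at the outset since everything else follows mechanically.

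For the forward implication, suppose $f$ is continuous. Since a composition of continuous maps is continuous, and the projections $p_{X},p_{Y}$ are continuous, the equalities $f_{1}=p_{X}\circ f$ and $f_{2}=p_{Y}\circ f$ immediately give that $f_{1}$ and $f_{2}$ are continuous. This direction is essentially a one-line consequence of the continuity of the projections, so I would keep it brief.

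For the converse, assume $f_{1}$ and $f_{2}$ are continuous. To verify that $f$ is continuous it suffices to check that the preimage under $f$ of every basis element of $X\times Y$ is open in $A$. The key computation is the set identity
\[
f^{-1}(U\times V)=f_{1}^{-1}(U)\cap f_{2}^{-1}(V),
\]
valid because $a\in f^{-1}(U\times V)$ iff $f_{1}(a)\in U$ and $f_{2}(a)\in V$. The right-hand side is an intersection of two open sets, each open by continuity of $f_{1}$ and $f_{2}$ respectively, hence open; therefore $f^{-1}(U\times V)$ is open for every basic open set $U\times V$.

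The only genuinely substantive step — the one I would be careful to justify rather than assert — is the reduction of continuity to the behaviour on basis elements: a map into a topological space is continuous as soon as the preimage of every member of a chosen basis (indeed a subbasis) is open, because preimage commutes with arbitrary unions. Granting that lemma, continuity of $f$ follows at once from the identity above, completing the equivalence.
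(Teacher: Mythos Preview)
Your argument is correct and is the standard textbook proof of this fact. Note, however, that the paper does not actually prove this theorem: it is merely recalled as a ``well known result'' and stated without proof, so there is no paper proof to compare against.
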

Next, we recall the notion of a path and path connected in a space.
\begin{definition}
  Given points $x$ and $y$ of the space $X$, a {\it path} in $X$ from $x$ to $y$ is a continuous map $f:[a,b]\rightarrow X$ of some closed interval in the real line into $X$, such that $f(a)=x$ and $f(b)=y$. A space $X$ is said to be path connected if every pair of points of $X$ can be joined by a path in $X$.
\end{definition}
The next lemma is about construction of a path connected set associated with an element of numerical range of a tensor.
\begin{lemma}\label{lma:pthcnctd}
Let $\mc{A}\in \mb{C}^{I_{1\ldots N}\times I_{1\ldots N}}$ be a Hermitian tensor, i.e., $\mc{A}^{H}=\mc{A}$. Also, let $T_{\mc{A}}(\alpha)=\{\mc{X}\text{ is a unit tensor in }\mb{C}^{I_{1\ldots N}}\mid~\left\langle\mc{A}\n\mc{X},\mc{X}\right\rangle=\alpha\}$. Then $T_{\mc{A}}(\alpha)$ is path connected for $\alpha\in W(\mc{A})$.
\end{lemma}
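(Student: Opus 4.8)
The plan is to reduce the tensor statement to the corresponding statement for a Hermitian matrix and then to construct the connecting paths explicitly. First note that since $\mc{A}^H=\mc{A}$, the scalar $\left\langle\mc{A}\n\mc{X},\mc{X}\right\rangle=\mc{X}^H\n\mc{A}\n\mc{X}$ equals its own conjugate, so it is real; hence $\alpha\in\mb{R}$ and $T_{\mc{A}}(\alpha)$ is cut out of the unit sphere by a single real equation. The strategy is: (i) transport the problem to $\mb{C}^n$, with $n=I_{1}\cdots I_{N}$, via the standard vectorization isomorphism; (ii) diagonalize; (iii) build the path in two stages, first rotating phases and then moving along a convex set of squared moduli.

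For step (i) I would use the bijective linear isometry $\varphi:\mb{C}^{I_{1\ldots N}}\to\mb{C}^{n}$ (vectorization of the multi-index) together with the induced map $\Phi$ on square tensors, which satisfies $\varphi(\mc{A}\n\mc{X})=\Phi(\mc{A})\varphi(\mc{X})$, preserves the inner product, and sends $\mc{A}^H$ to the conjugate transpose $\Phi(\mc{A})^{*}$. Thus $A:=\Phi(\mc{A})$ is a Hermitian matrix, $\varphi$ restricts to a homeomorphism of the unit spheres, and it carries $T_{\mc{A}}(\alpha)$ onto $T_A(\alpha)=\{x\in\mb{C}^n:\|x\|=1,\ x^{*}Ax=\alpha\}$. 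Since path connectedness is a topological invariant, it suffices to show $T_A(\alpha)$ is path connected. Writing $A=UDU^{*}$ with $U$ unitary and $D=\mathrm{diag}(\lambda_1,\dots,\lambda_n)$, $\lambda_i\in\mb{R}$, the sphere homeomorphism $x\mapsto U^{*}x$ maps $T_A(\alpha)$ onto $T_D(\alpha)=\{y:\|y\|=1,\ \sum_i\lambda_i|y_i|^2=\alpha\}$, so I may assume the diagonal model.

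For the core construction (step iii), observe that both defining constraints of $T_D(\alpha)$ depend only on the moduli $|y_i|$. Hence for $y\in T_D(\alpha)$ the path $t\mapsto(|y_1|e^{\,i(1-t)\arg y_1},\dots,|y_n|e^{\,i(1-t)\arg y_n})$ (with the convention $\arg 0=0$) stays in $T_D(\alpha)$ and joins $y$ to its nonnegative modulus vector $|y|=(|y_1|,\dots,|y_n|)$. It remains to connect two nonnegative real points of $T_D(\alpha)$. Substituting $u_i=y_i^2$ identifies the nonnegative part of $T_D(\alpha)$ with the set $C=\{u\in\mb{R}^n_{\ge 0}:\sum_i u_i=1,\ \sum_i\lambda_i u_i=\alpha\}$, which is the intersection of the standard simplex with a hyperplane and is therefore convex, hence path connected; straight-line segments in $C$ pull back under the continuous coordinatewise square root to paths in $T_D(\alpha)$. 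Concatenating the two phase-rotation paths with such a segment connects any pair of points of $T_D(\alpha)$.

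I expect the only real obstacle to be the reduction in step (i): one must verify that the vectorization intertwines the Einstein product with matrix multiplication and, crucially, sends the partition-after-$N$-modes conjugate transpose $\mc{A}^H$ to $\Phi(\mc{A})^{*}$, so that Hermiticity is preserved; with this in hand the matrix spectral theorem supplies the diagonalization for free. The remaining points are routine: the phase-rotation path is continuous because each modulus is held fixed (so vanishing components cause no difficulty), and the pulled-back segments are continuous and of unit norm because $\sqrt{\,\cdot\,}$ is continuous on $[0,\infty)$ and $\sum_i u_i(t)=1$ along the segment. Alternatively, one can bypass matricization entirely by invoking a spectral decomposition $\mc{A}=\mc{U}\n\mc{D}\n\mc{U}^H$ for Hermitian tensors and running the same phase-and-square argument in the tensor coordinates, which is the main structural input if an intrinsic proof is preferred.
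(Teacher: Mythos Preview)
Your argument is correct. The three-stage plan---transport to $\mb{C}^{n}$ by the vectorization isometry (which does intertwine the Einstein product with matrix multiplication and sends $\mc{A}^{H}$ to $\Phi(\mc{A})^{*}$, as established in the literature the paper cites), diagonalize by the spectral theorem, then connect points by first rotating phases down to the nonnegative modulus vector and then sliding along the convex slice $C=\{u\in\mb{R}^{n}_{\ge 0}:\sum u_i=1,\ \sum \lambda_i u_i=\alpha\}$ pulled back through $\sqrt{\,\cdot\,}$---is the standard proof of this fiber-connectedness lemma in the matrix setting, and your transfer to the tensor setting is sound.

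The paper's own argument is organized in the same spirit: it too reduces to a diagonal quadratic form and exploits that the level set depends only on the squared moduli, so the route is essentially the same. The only stylistic difference is the one you already anticipate in your final paragraph: one can either matricize first and diagonalize $A=\Phi(\mc{A})$, or stay in tensor language and invoke a unitary diagonalization $\mc{A}=\mc{U}\n\mc{D}\n\mc{U}^{H}$ for Hermitian tensors. Both lead to the identical computation on $T_{D}(\alpha)$. Your version has the advantage of making the appeal to the classical spectral theorem completely explicit, at the cost of a short verification that vectorization respects $(\cdot)^{H}$; the intrinsic version avoids that verification but requires the tensor spectral theorem as input. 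Neither buys anything substantive over the other.
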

The next result verifies that the numerical range of a tensor defined in Definition \ref{def:nrtnsr} is a convex set.
\begin{theorem}
  For a tensor $\mc{A}\in\mb{C}^{I_{1\ldots N}\times I_{1\ldots N}}$, the numerical range $W(\mc{A})$ is convex.
\end{theorem}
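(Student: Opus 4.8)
The plan is to establish the tensor analogue of the Toeplitz--Hausdorff theorem, using the path-connectedness engine of Lemma~\ref{lma:pthcnctd}. Convexity means that for any two points $\mu_1 \neq \mu_2$ of $W(\mc{A})$ the whole segment $[\mu_1,\mu_2]$ lies in $W(\mc{A})$. First I would normalize the endpoints: choosing $\alpha = 1/(\mu_2-\mu_1)$ and $\beta = -\mu_1/(\mu_2-\mu_1)$, the affine map $z \mapsto \alpha z + \beta$ sends $\mu_1 \mapsto 0$ and $\mu_2 \mapsto 1$. By Theorem~\ref{thm:lnrprprty}, $W(\alpha\mc{A}+\beta\mc{I}) = \alpha W(\mc{A}) + \beta$, so setting $\mc{B} = \alpha\mc{A}+\beta\mc{I}$ it suffices to prove the single normalized statement $[0,1] \subseteq W(\mc{B})$ whenever $0,1 \in W(\mc{B})$; applying the inverse affine map then returns the segment $[\mu_1,\mu_2]$ inside $W(\mc{A})$.

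Next I would split $\mc{B}$ into its Cartesian (Toeplitz) form $\mc{B} = H + i K$, where $H = (\mc{B}+\mc{B}^{H})/2$ and $K = (\mc{B}-\mc{B}^{H})/(2i)$ are both Hermitian. For any unit tensor $\mc{X}$ one then has $\left\langle \mc{B}\n\mc{X},\mc{X}\right\rangle = \left\langle H\n\mc{X},\mc{X}\right\rangle + i\,\left\langle K\n\mc{X},\mc{X}\right\rangle$ with both summands real, so that $\textup{Re}\left\langle\mc{B}\n\mc{X},\mc{X}\right\rangle = \left\langle H\n\mc{X},\mc{X}\right\rangle$ and $\textup{Im}\left\langle\mc{B}\n\mc{X},\mc{X}\right\rangle = \left\langle K\n\mc{X},\mc{X}\right\rangle$. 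Since $0,1 \in W(\mc{B})$ are real, I can pick unit tensors $\mc{X}_0, \mc{X}_1$ with $\left\langle\mc{B}\n\mc{X}_0,\mc{X}_0\right\rangle = 0$ and $\left\langle\mc{B}\n\mc{X}_1,\mc{X}_1\right\rangle = 1$; their imaginary parts vanish, so $\left\langle K\n\mc{X}_0,\mc{X}_0\right\rangle = \left\langle K\n\mc{X}_1,\mc{X}_1\right\rangle = 0$, i.e. both lie in the level set $T_K(0)$. In particular $0 \in W(K)$, so Lemma~\ref{lma:pthcnctd} applies to the Hermitian tensor $K$ and guarantees that $T_K(0)$ is path connected.

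The core of the argument is then an intermediate-value step on this level set. I would define $g : T_K(0) \to \mb{R}$ by $g(\mc{X}) = \left\langle H\n\mc{X},\mc{X}\right\rangle$, which is continuous in $\mc{X}$. Because $T_K(0)$ is path connected and $g$ is continuous, the image $g(T_K(0))$ is a connected subset of $\mb{R}$, hence an interval; as it contains $g(\mc{X}_0)=0$ and $g(\mc{X}_1)=1$, it contains all of $[0,1]$. Thus for every $t \in [0,1]$ there is a unit tensor $\mc{X} \in T_K(0)$ with $g(\mc{X}) = t$, and since $\mc{X} \in T_K(0)$ forces the imaginary part to be zero, $\left\langle\mc{B}\n\mc{X},\mc{X}\right\rangle = t + i\cdot 0 = t$. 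Hence $t \in W(\mc{B})$, giving $[0,1]\subseteq W(\mc{B})$ and, after undoing the normalization, $[\mu_1,\mu_2]\subseteq W(\mc{A})$.

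The main obstacle is really packaged inside Lemma~\ref{lma:pthcnctd}: the entire argument hinges on the path-connectedness of the level set $T_K(0)$ of a Hermitian tensor, which is precisely what lets me run the intermediate-value step joining $\mc{X}_0$ to $\mc{X}_1$ while keeping the imaginary part pinned at $0$. The remaining ingredients---continuity of $\mc{X}\mapsto\left\langle H\n\mc{X},\mc{X}\right\rangle$, the fact that a continuous real image of a path-connected set is an interval, and the validity of the affine reduction---are routine, the last following directly from Theorem~\ref{thm:lnrprprty}.
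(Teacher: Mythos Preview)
Your proposal is correct and follows essentially the same route as the paper: the placement of Lemma~\ref{lma:pthcnctd} immediately before the convexity theorem, together with Theorem~\ref{thm:lnrprprty} for the affine reduction, shows that the paper's argument is precisely the Toeplitz--Hausdorff strategy you describe---normalize the two given points to $0$ and $1$, split $\mc{B}=H+iK$, use path-connectedness of $T_{K}(0)$ from Lemma~\ref{lma:pthcnctd}, and run the intermediate value theorem on $\mc{X}\mapsto\left\langle H\n\mc{X},\mc{X}\right\rangle$.
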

\section{An algorithm for finding tensor numerical range}\label{sec: algthmnrtnsr}
In this section, we provide an algorithm to plot the boundary of the numerical range of a tensor. 
The set of tensors $\{\mc{U}_{1},\mc{U}_{2},\ldots,\mc{U}_{n}\}$, where $\mc{U}_{i}\in\mb{C}^{I_{1\ldots N}}$, is called orthogonal if for $i\neq j$, $\left\langle\mc{U}_{i},\mc{U}_{j}\right\rangle=0$, and further if it satisfies $\left\langle\mc{U}_{i},\mc{U}_{i}\right\rangle=1$ then it is called orthonormal.
\begin{theorem}\label{thm:algm}
Let $\mc{A}\in \mb{C}^{I_{1\ldots N}\times I_{1\ldots N}}$ and $\mc{X}\in\mb{C}^{I_{1\ldots N}}$. Then the following are equivalent.
\begin{enumerate}[i)]
    \item $Re\left\langle\mc{A}\n\mc{X},\mc{X}\right\rangle=\max\{Re(z):\, z\in W(\mc{A})\}$.
    \item $\left\langle H(\mc{A})\n\mc{X},\mc{X}\right\rangle=\max\{r:\,r\in W(H(\mc{A}))\}$.
    \item $H(\mc{A})\n\mc{X}=\lambda_{\max}\mc{X}$, where $\lambda_{\max}=\max\{\lambda: \lambda \in \sigma(H(\mc{A}))\}$.
\end{enumerate}
\end{theorem}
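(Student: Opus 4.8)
The plan is to split the triple equivalence into the two biconditionals (i)~$\Leftrightarrow$~(ii) and (ii)~$\Leftrightarrow$~(iii), handling the first by a scalar identity together with the earlier theorem on real and imaginary parts, and the second by the spectral decomposition of the Hermitian part $\mc{H}:=H(\mc{A})$.

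For (i)~$\Leftrightarrow$~(ii) I would first record the pointwise identity $\textup{Re}\langle\mc{A}\n\mc{X},\mc{X}\rangle=\langle H(\mc{A})\n\mc{X},\mc{X}\rangle$. Writing $z=\langle\mc{A}\n\mc{X},\mc{X}\rangle=\mc{X}^{H}\n\mc{A}\n\mc{X}$, this is a scalar, so its Hermitian adjoint equals its complex conjugate: $\bar z=(\mc{X}^{H}\n\mc{A}\n\mc{X})^{H}=\mc{X}^{H}\n\mc{A}^{H}\n\mc{X}$. Hence $\langle H(\mc{A})\n\mc{X},\mc{X}\rangle=\tfrac12(\mc{X}^{H}\n\mc{A}\n\mc{X}+\mc{X}^{H}\n\mc{A}^{H}\n\mc{X})=\tfrac12(z+\bar z)=\textup{Re}(z)$, which is exactly the identity. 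Next, the earlier theorem on real and imaginary parts gives $\textup{Re}\,W(\mc{A})=W(H(\mc{A}))$, and therefore $\max\{\textup{Re}(z):z\in W(\mc{A})\}=\max\{r:r\in W(H(\mc{A}))\}$. Combining the pointwise identity with this equality of maxima shows that the left-hand sides of (i) and (ii) coincide and the right-hand sides coincide, so the two scalar equations hold simultaneously; this yields (i)~$\Leftrightarrow$~(ii).

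For (ii)~$\Leftrightarrow$~(iii) I would invoke the spectral theorem for the Hermitian tensor $\mc{H}=H(\mc{A})$: since $\mc{H}^{H}=\mc{H}$, it possesses an orthonormal basis of eigentensors $\{\mc{V}_{1},\dots,\mc{V}_{n}\}$ in $\mb{C}^{I_{1\ldots N}}$ (with $n=I_{1}\cdots I_{N}$) and real eigenvalues $\lambda_{1}\ge\cdots\ge\lambda_{n}$. Expanding a unit tensor as $\mc{X}=\sum_{i}c_{i}\mc{V}_{i}$ with $\sum_{i}|c_{i}|^{2}=1$ and using orthonormality together with $\mc{H}\n\mc{V}_{i}=\lambda_{i}\mc{V}_{i}$, I would compute $\langle\mc{H}\n\mc{X},\mc{X}\rangle=\sum_{i}\lambda_{i}|c_{i}|^{2}\le\lambda_{\max}\sum_{i}|c_{i}|^{2}=\lambda_{\max}$, with equality precisely when $c_{i}=0$ for every $i$ with $\lambda_{i}<\lambda_{\max}$, i.e.\ when $\mc{X}$ lies in the $\lambda_{\max}$-eigenspace, equivalently $\mc{H}\n\mc{X}=\lambda_{\max}\mc{X}$. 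Since the largest value of $W(\mc{H})$ is $\lambda_{\max}$, the condition $\langle\mc{H}\n\mc{X},\mc{X}\rangle=\max\{r:r\in W(\mc{H})\}$ is exactly the equality case, giving (ii)~$\Leftrightarrow$~(iii).

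The main obstacle is the spectral step: I need the existence of an orthonormal eigentensor basis for $\mc{H}$ and the identification $\max\{r:r\in W(\mc{H})\}=\lambda_{\max}$. I would secure both by passing through the algebra isomorphism between even-order tensors under the Einstein product and ordinary matrices (flattening the first $N$ modes into a row index and the last $N$ into a column index), under which $\mc{H}$ corresponds to a Hermitian matrix, $W(\mc{H})$ to its matrix numerical range (a real interval with right endpoint $\lambda_{\max}$), and $\n$ to matrix multiplication; the classical Hermitian spectral theorem then transfers back, and the eigenvalue/eigentensor correspondence is preserved. Once this structural fact is available, the weighted-average inequality and its equality analysis are routine.
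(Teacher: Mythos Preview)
Your proof is correct and follows the same route the paper takes: the equivalence (i)~$\Leftrightarrow$~(ii) is exactly the pointwise identity $\textup{Re}\,\langle\mc{A}\n\mc{X},\mc{X}\rangle=\langle H(\mc{A})\n\mc{X},\mc{X}\rangle$ combined with the earlier result $\textup{Re}\,W(\mc{A})=W(H(\mc{A}))$, and (ii)~$\Leftrightarrow$~(iii) is the Rayleigh-quotient argument for the Hermitian tensor $H(\mc{A})$, justified (as you do) by the unfolding isomorphism to matrices so that the ordinary spectral theorem applies. The only point worth making explicit is that the statement tacitly assumes $\mc{X}$ is a unit tensor; you use this when writing $\sum_i|c_i|^2=1$, and it is needed for (i) and (ii) to be consistent with the definition of $W(\mc{A})$.
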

Next, an immediate consequence of the above Theorem \ref{thm:algm} is presented as a corollary without proof.
\begin{corollary}\label{thm_basicalg}
Let $\mc{A}\in \mb{C}^{I_{1\ldots N}\times I_{1\ldots N}}$. Then, 
\begin{equation*}
    \max\{Re(z):\, z\in W(\mc{A})\}=\max\{r:\,r\in W(H(\mc{A}))\}=\max\{\lambda: \lambda \in \sigma(H(\mc{A}))\}.
\end{equation*}
\end{corollary}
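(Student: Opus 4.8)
The plan is to produce one unit tensor at which all three quantities coincide, and then let the equivalence of Theorem~\ref{thm:algm} do the rest. First I would record that the three maxima are genuinely attained: the set of unit tensors in $\mb{C}^{I_{1\ldots N}}$ is compact and $\mc{X}\mapsto\left\langle\mc{A}\n\mc{X},\mc{X}\right\rangle$ is continuous, so $W(\mc{A})$ and $W(H(\mc{A}))$ are compact, while $\sigma(H(\mc{A}))$ is finite; hence each $\max$ in the statement exists. I would also note the pointwise identity $\textup{Re}\left\langle\mc{A}\n\mc{X},\mc{X}\right\rangle=\left\langle H(\mc{A})\n\mc{X},\mc{X}\right\rangle$, valid for every $\mc{X}$, since $\left\langle\mc{A}^{H}\n\mc{X},\mc{X}\right\rangle=\overline{\left\langle\mc{A}\n\mc{X},\mc{X}\right\rangle}$ as a scalar and $H(\mc{A})=(\mc{A}+\mc{A}^{H})/2$.

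Next I would choose the test tensor. Because $H(\mc{A})$ is Hermitian, it admits a real largest eigenvalue $\lambda_{\max}=\max\{\lambda:\lambda\in\sigma(H(\mc{A}))\}$ together with a unit eigentensor $\mc{X}_{0}$ satisfying $H(\mc{A})\n\mc{X}_{0}=\lambda_{\max}\mc{X}_{0}$. This is precisely condition (iii) of Theorem~\ref{thm:algm} for $\mc{X}=\mc{X}_{0}$, so by that theorem $\mc{X}_{0}$ also satisfies (i) and (ii).

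Finally I would chain the equalities at $\mc{X}_{0}$. From (iii), $\left\langle H(\mc{A})\n\mc{X}_{0},\mc{X}_{0}\right\rangle=\lambda_{\max}\|\mc{X}_{0}\|^{2}=\lambda_{\max}$; from (ii) the left-hand side equals $\max\{r:r\in W(H(\mc{A}))\}$; and combining the pointwise identity above with (i) gives $\left\langle H(\mc{A})\n\mc{X}_{0},\mc{X}_{0}\right\rangle=\textup{Re}\left\langle\mc{A}\n\mc{X}_{0},\mc{X}_{0}\right\rangle=\max\{\textup{Re}(z):z\in W(\mc{A})\}$. Reading these three evaluations together yields $\max\{\textup{Re}(z):z\in W(\mc{A})\}=\max\{r:r\in W(H(\mc{A}))\}=\lambda_{\max}$, which is the assertion.

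The main obstacle is the existence of the real maximal eigenvalue and a corresponding unit eigentensor of the Hermitian tensor $H(\mc{A})$ used to build $\mc{X}_{0}$; everything after that is bookkeeping. I would justify it through the order-preserving isomorphism between even-order square tensors under the Einstein product and matrices, under which a Hermitian tensor corresponds to a Hermitian matrix and $\mc{A}\n\mc{X}$ to matrix--vector multiplication, so that the spectral theorem for Hermitian matrices supplies $\lambda_{\max}$ and $\mc{X}_{0}$. As an alternative, the first equality $\max\{\textup{Re}(z):z\in W(\mc{A})\}=\max\{r:r\in W(H(\mc{A}))\}$ follows immediately from the earlier identity $\textup{Re}\,W(\mc{A})=W(H(\mc{A}))$, leaving only the equivalence of (ii) and (iii) in Theorem~\ref{thm:algm} to deliver the second equality.
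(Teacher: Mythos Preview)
Your proof is correct and matches the paper's approach: the paper presents this corollary explicitly as an immediate consequence of Theorem~\ref{thm:algm} and gives no further argument, and your write-up simply unpacks that immediate consequence by choosing a unit eigentensor of $H(\mc{A})$ for $\lambda_{\max}$ and invoking the equivalence (i)$\Leftrightarrow$(ii)$\Leftrightarrow$(iii).
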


Note that, according to Theorem \ref{thm:lnrprprty}, we have $e^{-i\theta}W(e^{i\theta}\mc{A})=W(\mc{A})$ for all $0\leq \theta\leq 2\pi$. 
\begin{theorem}
Let $\mc{A}\in \mb{C}^{I_{1\ldots N}\times I_{1\ldots N}}$ and $\mc{X}_{\theta}$ be the normalized eigentensor corresponding to the maximum eigenvalue of $H(e^{i\theta}\mc{A})$ for some $\theta\in [0,2\pi]$. Then, the complex number $\mc{X}_{\theta}^{H}\n\mc{A}\n\mc{X}_{\theta}=\left\langle\mc{A}\n\mc{X}_{\theta},\mc{X}_{\theta}\right\rangle$ is a boundary point of $W(\mc{A})$.
\end{theorem}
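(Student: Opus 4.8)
The plan is to reduce the statement to the extremal characterization already established in Theorem \ref{thm:algm}, but applied to the rotated tensor $e^{i\theta}\mc{A}$ rather than to $\mc{A}$ itself, and then to invoke the convexity of $W(\mc{A})$ to conclude that a supporting point in a fixed direction must lie on the boundary.

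First I would apply Theorem \ref{thm:algm} with $e^{i\theta}\mc{A}$ in place of $\mc{A}$. The Hermitian part $H(e^{i\theta}\mc{A})=(e^{i\theta}\mc{A}+e^{-i\theta}\mc{A}^{H})/2$ is Hermitian, so its largest eigenvalue $\lambda_{\max}$ and a corresponding normalized eigentensor $\mc{X}_{\theta}$ are well defined. The hypothesis is precisely statement (iii) of Theorem \ref{thm:algm} for the tensor $e^{i\theta}\mc{A}$, namely $H(e^{i\theta}\mc{A})\n\mc{X}_{\theta}=\lambda_{\max}\mc{X}_{\theta}$. By the equivalence (iii)$\Leftrightarrow$(i) of that theorem, this is equivalent to
\[
Re\left\langle e^{i\theta}\mc{A}\n\mc{X}_{\theta},\mc{X}_{\theta}\right\rangle=\max\{Re(z):\,z\in W(e^{i\theta}\mc{A})\}.
\]

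Next I would translate this back to $\mc{A}$. Put $w=\left\langle \mc{A}\n\mc{X}_{\theta},\mc{X}_{\theta}\right\rangle$, which belongs to $W(\mc{A})$ since $\mc{X}_{\theta}$ is a unit tensor. By linearity of the inner product in its first argument, $\left\langle e^{i\theta}\mc{A}\n\mc{X}_{\theta},\mc{X}_{\theta}\right\rangle=e^{i\theta}w$, and by the rotation identity $W(e^{i\theta}\mc{A})=e^{i\theta}W(\mc{A})$ recorded just before the statement (a consequence of Theorem \ref{thm:lnrprprty}), every element of $W(e^{i\theta}\mc{A})$ has the form $e^{i\theta}v$ with $v\in W(\mc{A})$. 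Hence the displayed equality becomes
\[
Re\big(e^{i\theta}w\big)\ge Re\big(e^{i\theta}v\big)\qquad\text{for all }v\in W(\mc{A}),
\]
so $w$ maximizes the real-linear functional $v\mapsto Re(e^{i\theta}v)$ over $W(\mc{A})$.

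Finally I would argue that $w\in\partial W(\mc{A})$. Since $W(\mc{A})$ is convex by the convexity theorem of this section, suppose for contradiction that $w$ were an interior point. Then $w+\varepsilon e^{-i\theta}\in W(\mc{A})$ for some small $\varepsilon>0$, yet $Re\big(e^{i\theta}(w+\varepsilon e^{-i\theta})\big)=Re(e^{i\theta}w)+\varepsilon>Re(e^{i\theta}w)$, contradicting the maximality just obtained. Thus $w$ cannot be interior, and $\mc{X}_{\theta}^{H}\n\mc{A}\n\mc{X}_{\theta}=w$ is a boundary point of $W(\mc{A})$. The algebraic identities $\left\langle e^{i\theta}\mc{A}\n\mc{X}_{\theta},\mc{X}_{\theta}\right\rangle=e^{i\theta}w$ and $W(e^{i\theta}\mc{A})=e^{i\theta}W(\mc{A})$ are routine; the main conceptual step, which genuinely uses convexity, is the last paragraph, where I recognize that maximizing $Re(e^{i\theta}\cdot)$ selects a supporting point of the convex set $W(\mc{A})$ in the direction $e^{-i\theta}$, and that such a point necessarily lies on the boundary.
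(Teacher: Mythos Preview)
Your argument is correct and follows the natural route: apply Theorem \ref{thm:algm} to $e^{i\theta}\mc{A}$, use the rotation identity $W(e^{i\theta}\mc{A})=e^{i\theta}W(\mc{A})$ from Theorem \ref{thm:lnrprprty}, and conclude that $w=\left\langle\mc{A}\n\mc{X}_{\theta},\mc{X}_{\theta}\right\rangle$ maximizes the linear functional $v\mapsto Re(e^{i\theta}v)$ over $W(\mc{A})$, hence lies on the boundary. The paper does not display a proof of this theorem, but your line of reasoning is exactly what the surrounding results (Theorem \ref{thm:algm}, Corollary \ref{thm_basicalg}, and the rotation remark) are set up to support, so your approach matches the intended one.

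One small remark: your final paragraph says the boundary step ``genuinely uses convexity,'' but in fact it does not. If $w$ were an interior point of $W(\mc{A})$, a full disc $B(w,\varepsilon)$ would lie in $W(\mc{A})$ regardless of convexity, and the point $w+\tfrac{\varepsilon}{2}e^{-i\theta}$ in that disc already contradicts maximality. Convexity is used elsewhere in the paper (and is essential for the algorithm to trace out the entire boundary), but not for this particular conclusion.
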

Based on the above theory we next present an algorithm to plot the boundary of the numerical range of a tensor.\\
\begin{tabular}{lll}
\multicolumn{3}{l}{\bf Algorithm 1}\\
    &{\bf Step 1.}& Choose $\theta\in[0,2\pi]$\\
    &{\bf Step 2.} & Calculate $\mc{T}=e^{i\theta}\mc{A}$, for a given $\mc{A}\in\mb{C}^{I_{1\ldots N}\times I_{1\ldots N}}$\\
    &{\bf Step 3.}& $\lambda_{max}= \max\{\lambda:\,\lambda\in\sigma(H(\mc{T}))\}$ \\
    &{\bf Step 4.} & Calculate the normalized eigentensor corresponding to $\lambda_{\max}$, $\mc{X}_{\max}$ (say) \\
    &{\bf Step 5.}& Calculate $z=\left\langle \mc{A}\n \mc{X}_{\max},\mc{X}_{\max}\right\rangle$\\
    &{\bf Step 6.}& Plot $z$\\
    &{\bf Step 7.}& Repeat {\bf Step 1.} to {\bf Step 6.} 
\end{tabular}\\[0.2cm]
Next, we present few numerical examples to illustrate the algorithm introduced above.
\begin{example}
Consider a tensor $\mc{A}\in \mb{C}^{3\times2\times 3\times 2}$ such that
\begin{center}
{\scriptsize\begin{tabular}{cc|cc|cc|cc|cc|cc}
\hline
\multicolumn{2}{c}{$\mc{A}(:,:,1,1)$} & \multicolumn{2}{c}{$\mc{A}(:,:,1,2)$} & \multicolumn{2}{c}{$\mc{A}(:,:,2,1)$} & \multicolumn{2}{c}{$\mc{A}(:,:,2,2)$} & \multicolumn{2}{c}{$\mc{A}(:,:,3,1)$} & \multicolumn{2}{c}{$\mc{A}(:,:,3,2)$} \\
\hline
    0 & 1 & 0 & 1 & 1 & 1 & 0 & 1 & 1 & 1 & 0 & 1  \\
    2 & 1 & 1 & 1 & 1 & 2 & -1 & 1 & 1 & 1 & 0 & 1 \\
    2 & 1 & 1 & 1 & 1 & 2 & -1 & 1 & 1 & 1 & 0 & 1 \\
\hline
\end{tabular}}.
\end{center}
Now, applying the Algorithm 1 to the tensor $\mc{A}$ for 500 different choices of $\theta$ we obtain the Figure \ref{fig:nrfig1}. Also, the eigenvalues of the tensor are plotted (highlighted by `\textcolor{red}{*}'), all of which lies inside the boundary of numerical range of the tensor.
\begin{figure}[h!]
    \centering
    \includegraphics[scale=0.5]{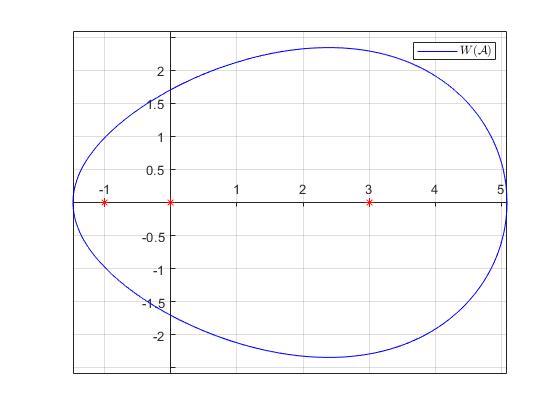}
    \caption{Numerical range for the tensor $\mc{A}$ (the `\textcolor{red}{*}' symbols in red represent the eigenvalues of $\mc{A}$)}
    \label{fig:nrfig1}
\end{figure}
\end{example}
\begin{example}
Consider a tensor $\mc{A}\in \mb{C}^{2\times2\times 2\times 2}$ such that
\begin{center}
{\scriptsize\begin{tabular}{cc|cc|cc|cc}
\hline
\multicolumn{2}{c}{$\mc{A}(:,:,1,1)$} & \multicolumn{2}{c}{$\mc{A}(:,:,2,1)$} & \multicolumn{2}{c}{$\mc{A}(:,:,1,2)$} & \multicolumn{2}{c}{$\mc{A}(:,:,2,2)$} \\
\hline
    0 & 0 & 1-i &1-i & 0& 0 & 0 & 1 +i \\
    1+i & 0 & 0 & 0 & 1-i & 1-i & 0 & 0  \\
\hline
\end{tabular}}.
\end{center}
Now, applying the Algorithm 1 to the tensor $\mc{A}$ for 500 different choices of $\theta$ we obtain the Figure \ref{fig:nrfig2}. Each eigenvalue of the tensor (highlighted by `\textcolor{red}{*}') lies inside the boundary of numerical range of the tensor.
\begin{figure}[h!]
    \centering
    \includegraphics[scale=0.5]{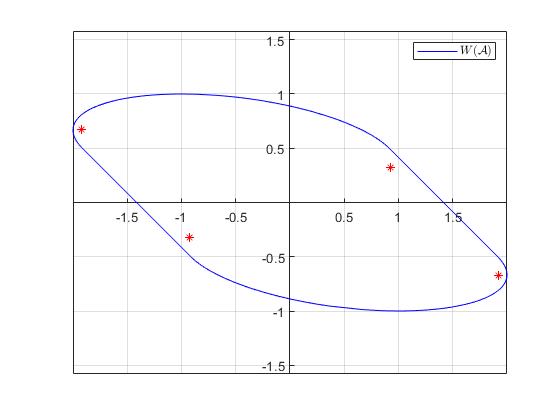}
    \caption{Numerical range for the tensor $\mc{A}$ (the `\textcolor{red}{*}' symbols in red represent the eigenvalues of $\mc{A}$)}
    \label{fig:nrfig2}
\end{figure}
\end{example}
\begin{example}
Consider a tensor $\mc{A}\in \mb{C}^{2\times2\times 2\times 2}$ such that
\begin{center}
{\scriptsize\begin{tabular}{cc|cc|cc|cc}
\hline
\multicolumn{2}{c}{$\mc{A}(:,:,1,1)$} & \multicolumn{2}{c}{$\mc{A}(:,:,2,1)$} & \multicolumn{2}{c}{$\mc{A}(:,:,1,2)$} & \multicolumn{2}{c}{$\mc{A}(:,:,2,2)$} \\
\hline
    i & 0 & 0 & 0 & 0 & 1+i & 0 & 0 \\
    0 & 0 & 1 & 0 & 0 & 0 & 0 & 2+i  \\
\hline
\end{tabular}}.
\end{center}
Now, applying the Algorithm 1 to the tensor $\mc{A}$ for 500 different choices of $\theta$ we obtain the Figure \ref{fig:nrfig3}. All the eigenvalues (highlighted by `\textcolor{red}{*}') of the tensor are on the boundary of the numerical range of the tensor.
\begin{figure}[h!]
    \centering
    \includegraphics[scale=0.5]{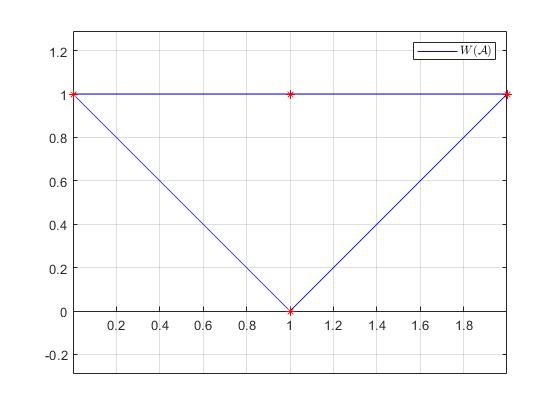}
    \caption{Numerical range for the tensor $\mc{A}$ (the `\textcolor{red}{*}' symbols in red represent the eigenvalues of $\mc{A}$)}
    \label{fig:nrfig3}
\end{figure}
\end{example}
\begin{example}
Consider a Hermitian tensor $\mc{A}\in \mb{C}^{2\times2\times 2\times 2}$ such that
\begin{center}
{\scriptsize\begin{tabular}{cc|cc|cc|cc}
\hline
\multicolumn{2}{c}{$\mc{A}(:,:,1,1)$} & \multicolumn{2}{c}{$\mc{A}(:,:,2,1)$} & \multicolumn{2}{c}{$\mc{A}(:,:,1,2)$} & \multicolumn{2}{c}{$\mc{A}(:,:,2,2)$} \\
\hline
    1 & -3i & i & 1-i & 3i & 4 & 2+5i & 7-i \\
    -i & 2-5i & 1 & 3+i & 1+i & 7+i & 3-i & 0  \\
\hline
\end{tabular}}.
\end{center}
Now, applying the Algorithm 1 to the tensor $\mc{A}$ for 500 different choices of $\theta$ we obtain the Figure \ref{fig:nrfig2}. Each eigenvalue of the tensor (highlighted by `\textcolor{red}{*}') lies inside the boundary of numerical range of the tensor.
\begin{figure}[h!]
    \centering
    \includegraphics[scale=0.5]{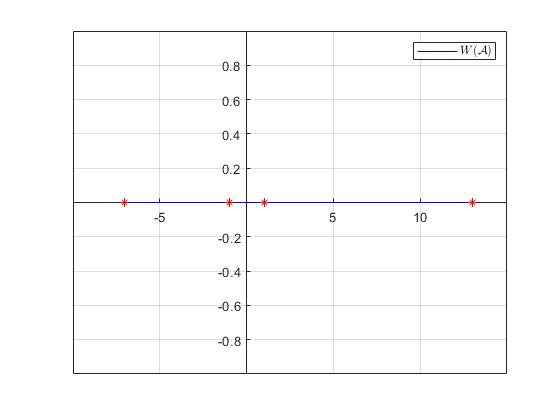}
    \caption{Numerical range for the tensor $\mc{A}$ (the `\textcolor{red}{*}' symbols in red represent the eigenvalues of $\mc{A}$)}
    \label{fig:nrfig4}
\end{figure}
\end{example}
\section{Numerical radius of a tensor}\label{sec:nrtnsr}
In this section, we introduce the notion of numerical radius for tensors and investigate its properties. 
\begin{definition}\label{def:nr}
  The numerical radius of an even-order square tensor $\mc{A}\in\mb{C}^{I_{1\ldots N}\times I_{1\ldots N}}$ is denoted as $w(\mc{A})$ and is defined as
  \begin{equation}
      w(\mc{A})=\max\{|z|\mid z\in W(\mc{A})\}.
  \end{equation}
\end{definition}
Tensors satisfy the Cauchy-Schwarz inequality. Next we state this without proof. One can follow the steps of existing proofs in the literature to verify the inequality.
\begin{theorem}
Let $\mc{A},\mc{B}\in\mb{C}^{I_{1\ldots N}}$. Then $|\langle \mc{A},\mc{B}\rangle|\leq \|\mc{A}\|\|\mc{B}\|$.
\end{theorem}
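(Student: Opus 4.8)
The plan is to prove the inequality by the classical quadratic argument, which applies verbatim once we confirm that the pairing $\langle\mc{X},\mc{Y}\rangle=\mc{Y}^{H}\n\mc{X}$ is a genuine inner product on $\mb{C}^{I_{1\ldots N}}$. First I would record the three defining properties, each of which is immediate after writing the Einstein contraction in coordinates as $\mc{Y}^{H}\n\mc{X}=\sum_{i_{1}\ldots i_{N}}\overline{y_{i_{1}\ldots i_{N}}}\,x_{i_{1}\ldots i_{N}}$: linearity in the first slot, i.e. $\langle \alpha\mc{X}_{1}+\mc{X}_{2},\mc{Y}\rangle=\alpha\langle\mc{X}_{1},\mc{Y}\rangle+\langle\mc{X}_{2},\mc{Y}\rangle$; conjugate symmetry $\langle\mc{Y},\mc{X}\rangle=\overline{\langle\mc{X},\mc{Y}\rangle}$; and positive definiteness $\langle\mc{X},\mc{X}\rangle=\sum_{i_{1}\ldots i_{N}}|x_{i_{1}\ldots i_{N}}|^{2}\ge 0$, with equality exactly when $\mc{X}=\mc{O}$.

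Next I would dispose of the degenerate case. If $\mc{B}=\mc{O}$ then both sides vanish and there is nothing to prove, so I assume $\mc{B}\neq\mc{O}$, whence $\|\mc{B}\|^{2}=\langle\mc{B},\mc{B}\rangle>0$ by positive definiteness; this is what legitimizes the division in the main step.

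For the main step, fix the scalar $t=\langle\mc{A},\mc{B}\rangle/\langle\mc{B},\mc{B}\rangle$ and expand, using linearity in the first argument and conjugate-linearity in the second,
\begin{equation*}
0\le \langle \mc{A}-t\mc{B},\,\mc{A}-t\mc{B}\rangle = \langle\mc{A},\mc{A}\rangle - t\langle\mc{B},\mc{A}\rangle - \overline{t}\,\langle\mc{A},\mc{B}\rangle + |t|^{2}\langle\mc{B},\mc{B}\rangle.
\end{equation*}
Substituting the chosen $t$ and applying conjugate symmetry $\langle\mc{B},\mc{A}\rangle=\overline{\langle\mc{A},\mc{B}\rangle}$ collapses the right-hand side to $\|\mc{A}\|^{2}-|\langle\mc{A},\mc{B}\rangle|^{2}/\|\mc{B}\|^{2}$, so that $|\langle\mc{A},\mc{B}\rangle|^{2}\le \|\mc{A}\|^{2}\|\mc{B}\|^{2}$; taking square roots gives the claim.

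The only genuine obstacle is the verification of the inner-product axioms for the Einstein-product pairing, since the quadratic argument itself is entirely formal once these are in hand. I expect this to reduce to the short index identity above, after which conjugate symmetry and strict positivity are transparent; in particular, establishing $\langle\mc{X},\mc{X}\rangle=0\Rightarrow\mc{X}=\mc{O}$ is precisely what is needed to guarantee a nonzero denominator in the choice of $t$.
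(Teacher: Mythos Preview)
Your proof is correct; it is the classical quadratic (orthogonal-projection) argument, and the verification that $\langle\mc{X},\mc{Y}\rangle=\mc{Y}^{H}\n\mc{X}=\sum_{i_{1}\ldots i_{N}}\overline{y_{i_{1}\ldots i_{N}}}x_{i_{1}\ldots i_{N}}$ satisfies the inner-product axioms is exactly the right ingredient. The paper itself states this theorem without proof, remarking only that ``one can follow the steps of existing proofs in the literature,'' so your argument is precisely the standard one the authors are alluding to.
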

Now we state a very popular theorem of numerical radius inequality for tensor which may contribute to develop new theories of numerical radius for tensors.
For $\mc{A}\in \mb{C}^{I_{1\ldots N}\times I_{1\ldots N}}$ and $\mc{X}, \mc{Y}\in \mb{C}^{I_{1\ldots N}}$ we define $\|\mc{A}\|=\sup\{\|\mc{A}\n\mc{X}\|:\|\mc{X}\|=1\}=\sup\{\left|\langle\mc{A}\n\mc{X},\mc{Y}\rangle\right|:\|\mc{X}\|=\|\mc{Y}\|=1\}$.
\begin{theorem}\label{thm:bscnrineq}
Let $\mc{A}\in \mb{C}^{I_{1\ldots N}\times I_{1\ldots N}}$, then
$ \dfrac{1}{2}\|\mc{A}\|\leq w(\mc{A})\leq \|\mc{A}\|.$
\end{theorem}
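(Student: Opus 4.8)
The plan is to establish the two inequalities separately, mirroring the classical matrix argument but carried out entirely through the Einstein-product inner product $\langle\mc{X},\mc{Y}\rangle=\mc{Y}^{H}\n\mc{X}$.

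For the upper bound $w(\mc{A})\le\|\mc{A}\|$, I would fix an arbitrary unit tensor $\mc{X}$ and apply the Cauchy--Schwarz inequality (the theorem immediately preceding this statement) to the pair $\mc{A}\n\mc{X}$ and $\mc{X}$, obtaining $|\langle\mc{A}\n\mc{X},\mc{X}\rangle|\le\|\mc{A}\n\mc{X}\|\,\|\mc{X}\|=\|\mc{A}\n\mc{X}\|\le\|\mc{A}\|$, where the final step uses the definition of $\|\mc{A}\|$ as a supremum of $\|\mc{A}\n\mc{X}\|$ over unit tensors. Taking the maximum over all unit tensors $\mc{X}$ then gives $w(\mc{A})\le\|\mc{A}\|$.

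For the lower bound $\tfrac12\|\mc{A}\|\le w(\mc{A})$, equivalently $\|\mc{A}\|\le 2w(\mc{A})$, I would work from the characterization $\|\mc{A}\|=\sup\{|\langle\mc{A}\n\mc{X},\mc{Y}\rangle|:\|\mc{X}\|=\|\mc{Y}\|=1\}$ recorded just before the theorem. The central tool is the polarization identity for the sesquilinear form $(\mc{X},\mc{Y})\mapsto\langle\mc{A}\n\mc{X},\mc{Y}\rangle$,
\begin{equation*}
4\langle\mc{A}\n\mc{X},\mc{Y}\rangle=\sum_{k=0}^{3}i^{k}\bigl\langle\mc{A}\n(\mc{X}+i^{k}\mc{Y}),\,\mc{X}+i^{k}\mc{Y}\bigr\rangle,
\end{equation*}
which holds because the form is linear in $\mc{X}$ and conjugate-linear in $\mc{Y}$; this in turn follows from distributivity of the Einstein product over tensor addition and its compatibility with scalar multiplication. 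Next I would bound each of the four summands using the homogeneous description of the numerical range in \eqref{eqn:nraltdfn}, namely $|\langle\mc{A}\n\mc{W},\mc{W}\rangle|\le w(\mc{A})\|\mc{W}\|^{2}$ for every tensor $\mc{W}$ (the inequality being trivial when $\mc{W}=\mc{O}$). Combining this with the triangle inequality yields
\begin{equation*}
4|\langle\mc{A}\n\mc{X},\mc{Y}\rangle|\le w(\mc{A})\bigl(\|\mc{X}+\mc{Y}\|^{2}+\|\mc{X}-\mc{Y}\|^{2}+\|\mc{X}+i\mc{Y}\|^{2}+\|\mc{X}-i\mc{Y}\|^{2}\bigr).
\end{equation*}
Applying the parallelogram law to the two pairs collapses the bracket to $4\|\mc{X}\|^{2}+4\|\mc{Y}\|^{2}=8$ for unit $\mc{X},\mc{Y}$, giving $|\langle\mc{A}\n\mc{X},\mc{Y}\rangle|\le 2w(\mc{A})$; taking the supremum over unit $\mc{X},\mc{Y}$ delivers $\|\mc{A}\|\le 2w(\mc{A})$.

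I do not expect a genuine obstacle here: the computations are routine once the polarization identity and the parallelogram law are in place. The only point requiring explicit verification is that both of these transfer verbatim to the Einstein-product setting, and they do, because $\langle\mc{X},\mc{Y}\rangle=\mc{Y}^{H}\n\mc{X}$ is a bona fide inner product on $\mb{C}^{I_{1\ldots N}}$ (the space being linearly isomorphic to an ordinary finite-dimensional complex Hilbert space), so no analytic input is needed beyond the sesquilinearity inherited from summation. The one subtlety worth flagging is the treatment of degenerate terms where $\mc{X}+i^{k}\mc{Y}=\mc{O}$, which is handled by noting that \eqref{eqn:nraltdfn} extends to $\mc{W}=\mc{O}$ as the trivial inequality.
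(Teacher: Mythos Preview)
Your argument is correct and is precisely the classical Hilbert-space proof: Cauchy--Schwarz for the upper bound, polarization plus the parallelogram law for the lower bound, all of which transfer directly because $\langle\mc{X},\mc{Y}\rangle=\mc{Y}^{H}\n\mc{X}$ is an ordinary inner product on a finite-dimensional complex space. The paper, however, does not supply a proof of this theorem at all; it merely states the inequality (introducing it as ``a very popular theorem of numerical radius inequality for tensor'') and moves on to a corollary, so there is nothing to compare against beyond noting that your proposal fills the gap with the expected standard argument.
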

Next we present a corollary as an immediate consequence of Theorem \ref{thm:bscnrineq} without proof.
\begin{corollary}
Let $\mc{A},\mc{B}\in\mb{C}^{I_{1\ldots N}\times I_{1\ldots N}}$. Then $w(\mc{A}\n\mc{B})\leq 4w(\mc{A})w(\mc{B})$.
\end{corollary}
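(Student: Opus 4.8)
The plan is to chain together the two inequalities already available: the norm--radius comparison of Theorem~\ref{thm:bscnrineq} and submultiplicativity of the operator norm induced by the Einstein product. Concretely, I would argue
\begin{equation*}
w(\mc{A}\n\mc{B}) \;\le\; \|\mc{A}\n\mc{B}\| \;\le\; \|\mc{A}\|\,\|\mc{B}\| \;\le\; \bigl(2\,w(\mc{A})\bigr)\bigl(2\,w(\mc{B})\bigr) \;=\; 4\,w(\mc{A})\,w(\mc{B}),
\end{equation*}
where the first inequality is the upper bound $w(\mc{C})\le\|\mc{C}\|$ of Theorem~\ref{thm:bscnrineq} applied to $\mc{C}=\mc{A}\n\mc{B}$, and the last step uses the lower bound $\tfrac{1}{2}\|\mc{A}\|\le w(\mc{A})$ of the same theorem, rewritten as $\|\mc{A}\|\le 2\,w(\mc{A})$, together with its analogue for $\mc{B}$.

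The only link that is not immediate from the stated results is the middle inequality $\|\mc{A}\n\mc{B}\|\le\|\mc{A}\|\,\|\mc{B}\|$, so I would establish submultiplicativity of the operator norm first. Starting from the definition $\|\mc{A}\|=\sup\{\|\mc{A}\n\mc{X}\|:\|\mc{X}\|=1\}$, take any unit tensor $\mc{X}\in\mb{C}^{I_{1\ldots N}}$ and use associativity of the Einstein product (asserted earlier in the paper) to write $(\mc{A}\n\mc{B})\n\mc{X}=\mc{A}\n(\mc{B}\n\mc{X})$. If $\mc{B}\n\mc{X}=\mc{O}$ the quantity $\|(\mc{A}\n\mc{B})\n\mc{X}\|$ vanishes and there is nothing to check; otherwise normalize by setting $\mc{Y}=(\mc{B}\n\mc{X})/\|\mc{B}\n\mc{X}\|$, so that $\|\mc{Y}\|=1$ and
\begin{equation*}
\|(\mc{A}\n\mc{B})\n\mc{X}\| = \|\mc{A}\n\mc{Y}\|\,\|\mc{B}\n\mc{X}\| \le \|\mc{A}\|\,\|\mc{B}\n\mc{X}\| \le \|\mc{A}\|\,\|\mc{B}\|,
\end{equation*}
using $\|\mc{A}\n\mc{Y}\|\le\|\mc{A}\|$ and $\|\mc{B}\n\mc{X}\|\le\|\mc{B}\|$ from the definition of the operator norm. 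Taking the supremum over all unit $\mc{X}$ yields $\|\mc{A}\n\mc{B}\|\le\|\mc{A}\|\,\|\mc{B}\|$.

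I expect the submultiplicativity step to be the main (indeed the only) obstacle, and it hinges entirely on associativity of $\n$ and on the supremum characterization of the norm; once it is in hand, the rest is a one-line concatenation of Theorem~\ref{thm:bscnrineq}. A minor point worth flagging is that the operator norm $\|\cdot\|$ and the vector norm induced by the inner product must be used consistently, and one should note that the factor $4$ is not claimed to be optimal---it simply reflects applying the factor-$2$ loss of $\|\mc{A}\|\le 2\,w(\mc{A})$ to both factors. No appeal to the Cauchy--Schwarz inequality is strictly needed here, since it is already embedded in the boundedness statements defining $\|\cdot\|$.
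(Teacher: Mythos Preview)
Your proposal is correct and is exactly the argument the paper has in mind: the corollary is stated there ``as an immediate consequence of Theorem~\ref{thm:bscnrineq} without proof,'' and your chain $w(\mc{A}\n\mc{B})\le\|\mc{A}\n\mc{B}\|\le\|\mc{A}\|\,\|\mc{B}\|\le 4\,w(\mc{A})\,w(\mc{B})$ is precisely that immediate consequence, with the submultiplicativity of the operator norm supplied explicitly.
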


Next, we recall two results on the determinant of tensors due to Liang {\it et al.} \cite{liang2019.1}.
\begin{theorem}[Theorem 3.16, \cite{liang2019.1}]\leavevmode\\
Let $\mc{A},\mc{B}\in \mb{R}^{I_{1\ldots N}\times I_{1\ldots N}}$ be two tensors. Then $\det(\mc{A}\n\mc{B})=\det(\mc{A})\det(\mc{B})$.
\end{theorem}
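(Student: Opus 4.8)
The plan is to reduce the statement to the classical multiplicativity of the matrix determinant by means of the standard unfolding isomorphism that identifies even-order square tensors under the Einstein product with ordinary square matrices under matrix multiplication. Concretely, I would recall the bijective flattening map $\phi$ that sends a tensor $\mc{A}\in\mb{R}^{I_{1\ldots N}\times I_{1\ldots N}}$ to a matrix $\phi(\mc{A})$ of size $(I_{1}\cdots I_{N})\times(I_{1}\cdots I_{N})$, obtained by collapsing the first block of $N$ indices $(i_{1},\ldots,i_{N})$ into a single row index and the second block into a single column index through one fixed lexicographic ordering of multi-indices. In the framework of Liang \textit{et al.}, the determinant of a square tensor is defined precisely as $\det(\mc{A}):=\det(\phi(\mc{A}))$, so the entire argument rests on the behaviour of $\phi$ under the two products.

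The key step is to verify that $\phi$ is multiplicative, that is, $\phi(\mc{A}\n\mc{B})=\phi(\mc{A})\,\phi(\mc{B})$. First I would write out the defining summation of the Einstein product,
\[
(\mc{A}\n\mc{B})_{i_{1}\ldots i_{N}j_{1}\ldots j_{N}}=\sum_{k_{1}\ldots k_{N}}a_{i_{1}\ldots i_{N}k_{1}\ldots k_{N}}\,b_{k_{1}\ldots k_{N}j_{1}\ldots j_{N}},
\]
and then observe that, under the fixed multi-index ordering, the contracted block $(k_{1},\ldots,k_{N})$ maps to a single summation index running over $1,\ldots,I_{1}\cdots I_{N}$. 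Provided the same ordering is used consistently on all three index blocks, this is exactly the row-times-column summation defining the matrix product $\phi(\mc{A})\,\phi(\mc{B})$, which yields the desired identity.

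With multiplicativity of $\phi$ in hand, the theorem follows in one line. Indeed,
\[
\det(\mc{A}\n\mc{B})=\det\bigl(\phi(\mc{A}\n\mc{B})\bigr)=\det\bigl(\phi(\mc{A})\,\phi(\mc{B})\bigr)=\det\bigl(\phi(\mc{A})\bigr)\det\bigl(\phi(\mc{B})\bigr)=\det(\mc{A})\det(\mc{B}),
\]
where the middle equality uses the homomorphism property and the penultimate equality is the classical multiplicativity of the determinant for matrices. The main obstacle, and the only place demanding care, is the bookkeeping in the homomorphism step: one must fix a single bijection between multi-indices $(i_{1},\ldots,i_{N})$ and the integers $1,\ldots,I_{1}\cdots I_{N}$ and apply it identically to the row block, the column block, and the contracted block, so that the Einstein contraction lines up precisely with the matrix contraction. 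Once this alignment is set up, no further computation is required.
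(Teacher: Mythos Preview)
Your proposal is correct. Note, however, that the present paper does not supply a proof of this statement at all: it is merely recalled from Liang, Zheng and Zhao (cited as \cite{liang2019.1}) and used as a tool in Section~\ref{sec:nrtnsr}. That said, your argument is precisely the one underlying the result in \cite{liang2019.1}: the tensor determinant there is defined as the ordinary determinant of the unfolded matrix, and the unfolding map is an algebra homomorphism from tensors under the Einstein product to matrices under matrix multiplication (this homomorphism property is the content of the transformation in Brazell \textit{et al.}~\cite{brazell2013}). So your route coincides with the intended one, and your caveat about fixing a single consistent multi-index bijection is exactly the point that needs attention.
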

\begin{theorem}[Theorem 4.10, \cite{liang2019.1}]\leavevmode\\
Let $\mc{A} = (a_{i_{1}\ldots i_{N}j_{1}\ldots j_{N}}) \in \mb{C}^{I_{1\ldots N}\times I_{1\ldots N}}$ be a given tensor, then $\det (\mc{A}) = \prod_{i_{1},\ldots,i_{N}} \lambda_{i_{1}\ldots i_{N}}$.

\end{theorem}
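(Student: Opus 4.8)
The plan is to transport the statement to the matrix setting through the standard isomorphism that linearizes the Einstein product, where the corresponding identity is classical. First I would fix the bijection $\phi:\mb{C}^{I_{1\ldots N}\times I_{1\ldots N}}\to\mb{C}^{I\times I}$, with $I=I_{1}\cdots I_{N}$, that sends a tensor $\mc{A}=(a_{i_{1}\ldots i_{N}j_{1}\ldots j_{N}})$ to the matrix whose $(\mu,\nu)$ entry is $a_{i_{1}\ldots i_{N}j_{1}\ldots j_{N}}$, where $\mu$ and $\nu$ are the linear indices obtained from the multi-indices $(i_{1},\ldots,i_{N})$ and $(j_{1},\ldots,j_{N})$ under a fixed ordering. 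The key structural fact is that $\phi$ is a ring homomorphism carrying the Einstein product to ordinary matrix multiplication, i.e. $\phi(\mc{A}\n\mc{B})=\phi(\mc{A})\phi(\mc{B})$, and $\phi(\mc{I})$ is the identity matrix. I would verify this by matching the contraction over $k_{1}\ldots k_{N}$ in the definition of $\n$ with the summation over the single shared index in matrix multiplication.

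Next I would relate the spectra. Writing $\mathrm{vec}(\mc{X})$ for the vector obtained from $\mc{X}\in\mb{C}^{I_{1\ldots N}}$ under the same ordering, the eigenvalue equation $\mc{A}\n\mc{X}=\lambda\mc{X}$ of Definition \ref{egndfn} is equivalent to $\phi(\mc{A})\,\mathrm{vec}(\mc{X})=\lambda\,\mathrm{vec}(\mc{X})$. Hence $\lambda$ is an eigenvalue of $\mc{A}$ if and only if it is an eigenvalue of the matrix $\phi(\mc{A})$, and since $\mathrm{vec}$ is a linear isomorphism onto $\mb{C}^{I}$, algebraic and geometric multiplicities are preserved. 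In particular $\mc{A}$ has exactly $I=I_{1}\cdots I_{N}$ eigenvalues counted with multiplicity, which is precisely the number of multi-indices $(i_{1},\ldots,i_{N})$ indexing the product $\prod_{i_{1},\ldots,i_{N}}\lambda_{i_{1}\ldots i_{N}}$.

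Then I would tie in the determinant. Since $\phi$ converts $\n$ into matrix multiplication and sends $\mc{I}$ to the identity, the product rule $\det(\mc{A}\n\mc{B})=\det(\mc{A})\det(\mc{B})$ recalled above, together with the normalization $\det(\mc{I})=1$, is consistent with the identification $\det(\mc{A})=\det(\phi(\mc{A}))$; one checks the two determinant functions agree (or simply adopts the intrinsic definition of $\det$ used in \cite{liang2019.1} under this identification). Applying the classical matrix identity $\det(M)=\prod_{k}\lambda_{k}$ to $M=\phi(\mc{A})$ and translating the eigenvalues back through the spectral correspondence yields $\det(\mc{A})=\prod_{i_{1},\ldots,i_{N}}\lambda_{i_{1}\ldots i_{N}}$.

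The main obstacle I anticipate is the bookkeeping required to show $\det(\mc{A})=\det(\phi(\mc{A}))$ compatibly with whatever intrinsic tensor determinant is adopted, together with the careful accounting of multiplicities so that the unordered product over all multi-indices on the right matches the product of the $I$ matrix eigenvalues. An alternative self-contained route avoids $\phi$ entirely: define the characteristic polynomial $p(\lambda)=\det(\lambda\mc{I}-\mc{A})$, argue it is monic of degree $I$ whose roots are exactly the eigenvalues of $\mc{A}$, and read off $\det(\mc{A})=(-1)^{I}p(0)=\prod_{i_{1},\ldots,i_{N}}\lambda_{i_{1}\ldots i_{N}}$ from Vieta's formulas; there the effort shifts to establishing that $p$ splits with precisely these roots, which again reduces to the spectral correspondence.
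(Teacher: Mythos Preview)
The paper does not prove this statement; it is merely recalled from \cite{liang2019.1} as a known result, with no argument given. There is therefore no in-paper proof to compare against.

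That said, your approach via the unfolding isomorphism $\phi$ is exactly the framework in which \cite{liang2019.1} develops the tensor determinant: there, $\det(\mc{A})$ is \emph{defined} as $\det(\phi(\mc{A}))$, so the step you flag as the ``main obstacle'' (checking that the intrinsic tensor determinant agrees with the matrix determinant under $\phi$) is in fact trivial by definition rather than something to be verified. With that in hand, your spectral correspondence and the classical identity $\det(M)=\prod_k\lambda_k$ complete the proof. Your alternative characteristic-polynomial route is also sound and amounts to the same thing once one observes $\det(\lambda\mc{I}-\mc{A})=\det(\lambda I-\phi(\mc{A}))$.
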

For two tensors $\mc{A},\mc{B}\in \mb{C}^{I_{1\ldots N}\times I_{1\ldots N}}$, define $|\mc{A}|:=(\mc{A}^{H}\n\mc{A})^{1/2}$ and $\mc{A}\geq \mc{B}$ means $\mc{A}-\mc{B}$ is a positive definite tensor. The following result provides a sufficient condition for unitarity of a tensor. 

\begin{lemma}\label{lem:untry}
Let $\mc{A}\in\mb{C}^{I_{1\ldots N}\times I_{1\ldots N}}$ be an invertible tensor such that $w(\mc{A})\leq 1$
and $|\mc{A}|\geq \mc{I}.$ Then $\mc{A}$ is unitary.
\end{lemma}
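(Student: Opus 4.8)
The plan is to reduce unitarity to a single determinant comparison, exploiting the two determinant results recalled just above (Theorem~3.16 and Theorem~4.10 of \cite{liang2019.1}) together with the spectral containment $\sigma(\mc{A})\subseteq W(\mc{A})$ from Theorem~\ref{thm:subset}. The target is to show $\mc{A}^{H}\n\mc{A}=\mc{I}$; since $\mc{A}$ is square and invertible, this is equivalent to $\mc{A}$ being unitary. The idea is to squeeze the positive real number $\det(\mc{A}^{H}\n\mc{A})=|\det(\mc{A})|^{2}$ between the bound $\le 1$ coming from $w(\mc{A})\le 1$ and the bound $\ge 1$ coming from $|\mc{A}|\ge\mc{I}$, and then read off that the eigenvalues of $\mc{A}^{H}\n\mc{A}$ are all forced to equal $1$.

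First I would bound $|\det(\mc{A})|$ from above. Because $\sigma(\mc{A})\subseteq W(\mc{A})$ and $w(\mc{A})=\max\{|z|:z\in W(\mc{A})\}\le 1$ by Definition~\ref{def:nr}, every eigenvalue $\lambda$ of $\mc{A}$ satisfies $|\lambda|\le 1$. Applying Theorem~4.10, which gives $\det(\mc{A})=\prod_{i_{1}\ldots i_{N}}\lambda_{i_{1}\ldots i_{N}}$, I obtain $|\det(\mc{A})|=\prod_{i_{1}\ldots i_{N}}|\lambda_{i_{1}\ldots i_{N}}|\le 1$.

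Next I would bound the same quantity from below using $|\mc{A}|\ge\mc{I}$. Reading this inequality in the positive-semidefinite sense, the eigenvalues of $|\mc{A}|=(\mc{A}^{H}\n\mc{A})^{1/2}$ are all at least $1$, so the eigenvalues $\mu_{k}$ of the Hermitian positive definite tensor $\mc{A}^{H}\n\mc{A}$ satisfy $\mu_{k}\ge 1$; hence Theorem~4.10 yields $\det(\mc{A}^{H}\n\mc{A})=\prod_{k}\mu_{k}\ge 1$. By multiplicativity (Theorem~3.16) and the conjugate-transpose identity $\det(\mc{A}^{H})=\overline{\det(\mc{A})}$ (valid because the eigenvalues of $\mc{A}^{H}$ are the complex conjugates of those of $\mc{A}$), I also have $\det(\mc{A}^{H}\n\mc{A})=\det(\mc{A}^{H})\det(\mc{A})=|\det(\mc{A})|^{2}\le 1$ from the previous step. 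Combining the two bounds forces $\prod_{k}\mu_{k}=1$ with every $\mu_{k}\ge 1$, so each $\mu_{k}=1$; since $\mc{A}^{H}\n\mc{A}$ is Hermitian, its spectral decomposition then gives $\mc{A}^{H}\n\mc{A}=\mc{I}$, and invertibility upgrades this to $\mc{A}^{H}=\mc{A}^{-1}$, i.e. $\mc{A}$ is unitary.

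The two places that require care, rather than any hard estimate, are the determinant bookkeeping facts that are routine for matrices but must be invoked from the tensor theory of \cite{liang2019,liang2019.1}: the relation $\det(\mc{A}^{H})=\overline{\det(\mc{A})}$, and the final implication that a Hermitian tensor all of whose eigenvalues equal $1$ must coincide with $\mc{I}$ (the spectral theorem for Hermitian tensors under the Einstein product). The single genuinely delicate point is the interpretation of $|\mc{A}|\ge\mc{I}$: the argument needs the non-strict eigenvalue inequality $\mu_{k}\ge 1$ so that the product can equal $1$, so I would read $\ge$ in the positive-semidefinite sense (otherwise the hypotheses $w(\mc{A})\le 1$ and $|\mc{A}|\ge\mc{I}$ become jointly unsatisfiable and the statement is vacuous).
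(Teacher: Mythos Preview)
Your argument is correct and is precisely the one the paper sets up by recalling the two determinant theorems (Theorems~3.16 and~4.10 of \cite{liang2019.1}) immediately before the lemma: bound $|\det(\mc{A})|\le 1$ from $\rho(\mc{A})\le w(\mc{A})\le 1$, bound $\det(\mc{A}^{H}\n\mc{A})\ge 1$ from $|\mc{A}|\ge\mc{I}$, and use multiplicativity to force every singular value to equal $1$. Your semidefinite reading of $\ge$ is also the right one---the paper's own proof of Theorem~\ref{thm:untry1} derives $|\mc{B}|\ge\mc{I}$ from the manifestly semidefinite inequality $(|\mc{A}|^{1/2}-|\mc{A}|^{-1/2})^{2}\ge 0$, confirming that interpretation despite the stated convention.
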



Lemma \ref{lem:untry} is helpful in the next theorem to prove a new necessary and sufficient condition for a unitary tensor. 

\begin{theorem}\label{thm:untry1}
$\mc{A}\in\mb{C}^{I_{1\ldots N}\times I_{1\ldots N}}$ is an invertible tensor such that $w(\mc{A})\leq 1$ and $w(\mc{A}^{-1})\leq 1$ if, and only if, $\mc{A}$ is unitary. 
\end{theorem}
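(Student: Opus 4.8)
The plan is to prove the two implications separately: the sufficiency is an immediate consequence of Theorem~\ref{thm:bscnrineq}, whereas the necessity will be reduced to Lemma~\ref{lem:untry}. For the sufficiency ($\Leftarrow$), suppose $\mc{A}$ is unitary, so that $\mc{A}^{H}\n\mc{A}=\mc{I}=\mc{A}\n\mc{A}^{H}$ and $\mc{A}^{-1}=\mc{A}^{H}$. For every unit tensor $\mc{X}$ one has $\|\mc{A}\n\mc{X}\|^{2}=\langle\mc{A}^{H}\n\mc{A}\n\mc{X},\mc{X}\rangle=\|\mc{X}\|^{2}=1$, hence $\|\mc{A}\|=1$; since $\mc{A}^{-1}=\mc{A}^{H}$ is itself unitary, the same computation gives $\|\mc{A}^{-1}\|=1$. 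Theorem~\ref{thm:bscnrineq} then yields $w(\mc{A})\le\|\mc{A}\|=1$ and $w(\mc{A}^{-1})\le\|\mc{A}^{-1}\|=1$.

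For the necessity ($\Rightarrow$), the first step is to localize the spectrum. By Theorem~\ref{thm:subset}, $\sigma(\mc{A})\subseteq W(\mc{A})$, so every eigenvalue $\lambda$ of $\mc{A}$ satisfies $|\lambda|\le w(\mc{A})\le 1$. As $\mc{A}$ is invertible, the eigenvalues of $\mc{A}^{-1}$ are exactly the numbers $1/\lambda$; applying the same reasoning to $\mc{A}^{-1}$ gives $|1/\lambda|\le w(\mc{A}^{-1})\le 1$, i.e. $|\lambda|\ge 1$. Hence every eigenvalue of $\mc{A}$ lies on the unit circle, and by the determinant results of \cite{liang2019.1} (Theorems~3.16 and 4.10) we obtain $|\det(\mc{A})|=\prod|\lambda|=1$. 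Writing $\sigma_{i}$ for the singular values of $\mc{A}$ (the eigenvalues of $|\mc{A}|$), the same determinant facts give $\prod\sigma_{i}=|\det(\mc{A})|=1$.

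By Lemma~\ref{lem:untry}, since $w(\mc{A})\le 1$ is already in hand, it now suffices to prove $|\mc{A}|\ge\mc{I}$, i.e. that every $\sigma_{i}\ge 1$; combined with $\prod\sigma_{i}=1$ this forces every $\sigma_{i}=1$. To reach this I would use the characterization, derivable from Corollary~\ref{thm_basicalg} together with the identity $e^{-i\theta}W(e^{i\theta}\mc{A})=W(\mc{A})$, that $w(\mc{A})\le 1$ is equivalent to $H(e^{i\theta}\mc{A})\le\mc{I}$ for all $\theta\in[0,2\pi]$. For an eigenpair $\mc{A}\n\mc{X}=\lambda\mc{X}$ with $\lambda=e^{i\phi}$ and unit eigentensor $\mc{X}$, the choice $\theta=-\phi$ gives $\langle H(e^{-i\phi}\mc{A})\n\mc{X},\mc{X}\rangle=\mathrm{Re}(e^{-i\phi}\lambda)=1$, so $\mc{X}$ attains the top eigenvalue of $H(e^{-i\phi}\mc{A})\le\mc{I}$; expanding $H(e^{-i\phi}\mc{A})\n\mc{X}=\mc{X}$ and using $\mc{A}\n\mc{X}=\lambda\mc{X}$ forces $\mc{A}^{H}\n\mc{X}=\bar{\lambda}\mc{X}$. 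Therefore $\mc{A}^{H}\n\mc{A}\n\mc{X}=\mc{X}$, so $\mc{X}$ is a singular vector with singular value $1$ and reduces $\mc{A}$; restricting to $\mc{X}^{\perp}$, which inherits all the hypotheses, and inducting on the dimension produces an orthonormal eigenbasis on which $\mc{A}^{H}\n\mc{A}=\mc{I}$. Thus $|\mc{A}|=\mc{I}\ge\mc{I}$, and Lemma~\ref{lem:untry} completes the proof.

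The main obstacle is exactly this last step, namely upgrading ``all eigenvalues lie on the unit circle'' to ``$|\mc{A}|\ge\mc{I}$''. One cannot shortcut it through norms: $w(\mc{A}^{-1})\le 1$ only yields $\|\mc{A}^{-1}\|\le 2$ via Theorem~\ref{thm:bscnrineq}, hence merely $|\mc{A}|\ge\tfrac12\mc{I}$, which is too weak, so the spectral localization is indispensable. The delicate content is the boundary-eigenvalue phenomenon showing that an eigentensor whose eigenvalue has modulus $w(\mc{A})$ is automatically an eigentensor of $\mc{A}^{H}$ (a normality/reducing-subspace effect), together with verifying that $\mc{X}^{\perp}$ is $\mc{A}$-invariant so that the induction closes in the Einstein-product setting.
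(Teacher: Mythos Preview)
Your argument is correct, but it follows a genuinely different route from the paper's. The paper never localizes the spectrum or runs an induction on invariant subspaces. Instead, it takes the polar decomposition $\mc{A}=\mc{U}\n|\mc{A}|$, observes that $(\mc{A}^{-1})^{H}=\mc{U}\n|\mc{A}|^{-1}$, and introduces the auxiliary tensor $\mc{B}=\mc{U}\n\dfrac{|\mc{A}|+|\mc{A}|^{-1}}{2}$. From $w(\mc{A})\le 1$ and $w(\mc{A}^{-1})\le 1$ one gets $w(\mc{B})\le 1$ by a single triangle inequality, while the operator AM--GM inequality $(|\mc{A}|^{1/2}-|\mc{A}|^{-1/2})^{2}\ge 0$ immediately yields $|\mc{B}|=\dfrac{|\mc{A}|+|\mc{A}|^{-1}}{2}\ge\mc{I}$. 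Lemma~\ref{lem:untry}, applied to $\mc{B}$ rather than to $\mc{A}$, then gives $|\mc{B}|=\mc{I}$, hence $|\mc{A}|+|\mc{A}|^{-1}=2\mc{I}$, which forces $|\mc{A}|=\mc{I}$.

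In contrast, your proof is a spectral/reducing-subspace argument: you pin every eigenvalue on the unit circle using $w(\mc{A})\le 1$ and $w(\mc{A}^{-1})\le 1$, then use the boundary-eigenvalue phenomenon (via Corollary~\ref{thm_basicalg} and Theorem~\ref{thm:algm}) to promote each eigentensor of $\mc{A}$ to an eigentensor of $\mc{A}^{H}$, split off its span, and induct. This works, but note that several pieces you carry along are not actually needed: once the induction shows $\mc{A}^{H}\n\mc{A}=\mc{I}$ directly, the determinant computation $\prod\sigma_{i}=1$ and the final appeal to Lemma~\ref{lem:untry} are superfluous. The paper's approach is shorter and makes essential (not incidental) use of Lemma~\ref{lem:untry}; yours is more elementary and yields, along the way, that $\mc{A}$ is normal with unimodular spectrum, at the cost of having to check the invariance of $\mc{X}^{\perp}$ under both $\mc{A}$ and $\mc{A}^{-1}$ so that the induction closes.
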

\begin{proof}
Let $\mc{A}=\mc{U}\n|\mc{A}|$ be the polar decomposition of $\mc{A}.$ Then $(\mc{A}^{-1})^{H}=((\mc{U}\n|\mc{A}|)^{-1})^{H}=(|\mc{A}|^{-1}\n\mc{U}^{-1})^{H}=\mc{U}\n(|\mc{A}|^{-1})^{H}=\mc{U}\n|\mc{A}|^{-1}.$ Since $w(\mc{A}^{-1})\leq 1$, so for any unit tensor $\mc{X}\in \mb{C}^{I_{1\ldots N}}$, we have
$|\left\langle\mc{U}\n|\mc{A}|^{-1}\n\mc{X},\mc{X}\right\rangle|=|\left\langle(\mc{A}^{-1})^{H}\n\mc{X},\mc{X}\right\rangle|=|\left\langle\mc{A}^{-1}\n\mc{X},\mc{X}\right\rangle|\leq 1.$
Let $\mc{B}:=\mc{U}\n\frac{|\mc{A}|+|\mc{A}|^{-1}}{2}.$ Here $|\mc{B}|=\frac{|\mc{A}|+|\mc{A}|^{-1}}{2}.$ Now, 
\begin{align*}
    |\left\langle\mc{B}\n\mc{X},\mc{X}\right\rangle|&=\left|\left\langle\left(\mc{U}\n\frac{|\mc{A}|+|\mc{A}|^{-1}}{2}\right)\n\mc{X},\mc{X}\right\rangle\right|\\
    &=\frac{1}{2} |\left\langle\mc{U}\n|\mc{A}|\n\mc{X},\mc{X}\right\rangle+\left\langle \mc{U}\n|\mc{A}|^{-1}\n\mc{X},\mc{X}\right\rangle|\\
    &= \frac{1}{2} |\left\langle\mc{A}\n\mc{X},\mc{X}\right\rangle+\left\langle\mc{A}^{-1}\n\mc{X},\mc{X}\right\rangle|\\
    &\leq 1.
\end{align*}
Thus $w(\mc{B})\leq 1.$ Again, since $|\mc{A}|>0$ and $|\mc{A}|^{-1}>0$, so $|\mc{A}|+|\mc{A}|^{-1}-2\mc{I}=(|\mc{A}|^{1/2}-|\mc{A}|^{-1/2})^2\geq 0.$ Hence $\frac{|\mc{A}|+|\mc{A}|^{-1}}{2}\geq \mc{I}$, i.e., $|\mc{B}|\geq \mc{I}.$ Thus $\mc{B}$ is unitary due to Lemma \ref{lem:untry} and hence $|\mc{B}|=\mc{I}$, i.e., $|\mc{A}|+|\mc{A}|^{-1}=2\mc{I}.$ This leads to $|\mc{A}|=\mc{I}.$ Thus $\mc{A}$ is unitary. Converse part is obvious as $w(\mc{A})\leq \|\mc{A}\|$ by Theorem \ref{thm:bscnrineq}.
\end{proof}

\section{Numerical range of Moore--Penrose inverse of an even-order square tensor}\label{sec:nrmpitnsr}
In this section, we concentrate on the numerical range of Moore--Penrose inverse of an even-order square tensor and investigate how it relates with the numerical range of original tensor. The first result of this section confirms that both the tensors $\mc{A}$ and its Moore--Penrose inverse $\mc{A}^{\dg}$ are Hermitian or normal simultaneously. 

\begin{theorem}
Let $\mc{A}\in \mb{C}^{I_{1\ldots N}\times I_{1\ldots N}}$. Then $\mc{A}$ is normal (resp. Hermitian) if, and only if, $\mc{A}^{\dg}$ is normal (resp. Hermitian).
\end{theorem}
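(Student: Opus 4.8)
The plan is to reduce everything to two structural properties of the Moore--Penrose inverse under the Einstein product and then read off both equivalences. First I would record the two facts I intend to use repeatedly: $(\mc{A}^{\dg})^{\dg}=\mc{A}$ and $(\mc{A}^{\dg})^{H}=(\mc{A}^{H})^{\dg}$. Both follow by inspection from Definition \ref{def:defmpi}: the four equations \eqref{mpeq1}--\eqref{mpeq4} are symmetric under interchanging the roles of $\mc{A}$ and $\mc{X}=\mc{A}^{\dg}$ (equation \eqref{mpeq1} becomes \eqref{mpeq2} and \eqref{mpeq3} becomes \eqref{mpeq4}), which yields the first; and taking the conjugate transpose of every equation carries the system into itself, since $(\mc{A}\n\mc{X})^{H}=\mc{X}^{H}\n\mc{A}^{H}$ and the Einstein product is associative, which yields the second. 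These are one-line verifications that I would relegate to a preliminary remark.

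For the Hermitian equivalence the argument is immediate. If $\mc{A}^{H}=\mc{A}$, then $(\mc{A}^{\dg})^{H}=(\mc{A}^{H})^{\dg}=\mc{A}^{\dg}$, so $\mc{A}^{\dg}$ is Hermitian. Conversely, if $\mc{A}^{\dg}$ is Hermitian, applying the same implication to $\mc{A}^{\dg}$ in place of $\mc{A}$ and using $(\mc{A}^{\dg})^{\dg}=\mc{A}$ shows that $\mc{A}=(\mc{A}^{\dg})^{\dg}$ is Hermitian. No further machinery is needed here.

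The substantive part is the normal case, and the key step I would establish first is the pair of reverse-order identities
\begin{equation*}
(\mc{A}^{H}\n\mc{A})^{\dg}=\mc{A}^{\dg}\n(\mc{A}^{\dg})^{H}, \qquad (\mc{A}\n\mc{A}^{H})^{\dg}=(\mc{A}^{\dg})^{H}\n\mc{A}^{\dg}.
\end{equation*}
I would prove the first by checking that $\mc{X}:=\mc{A}^{\dg}\n(\mc{A}^{\dg})^{H}$ satisfies the four Moore--Penrose equations for $\mc{A}^{H}\n\mc{A}$; the computation hinges on the Hermitian idempotents $\mc{A}\n\mc{A}^{\dg}$ and $\mc{A}^{\dg}\n\mc{A}$ together with the absorbing relations $(\mc{A}^{\dg})^{H}\n\mc{A}^{H}=\mc{A}\n\mc{A}^{\dg}$ (the conjugate transpose of \eqref{mpeq3}) and $\mc{A}\n\mc{A}^{\dg}\n(\mc{A}^{\dg})^{H}=(\mc{A}^{\dg})^{H}$ (equivalent, after conjugate transposition, to \eqref{mpeq2}). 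The second identity follows from the first by substituting $\mc{A}^{H}$ for $\mc{A}$ and using $(\mc{A}^{H})^{\dg}=(\mc{A}^{\dg})^{H}$. With these in hand the theorem is one line: if $\mc{A}$ is normal, i.e. $\mc{A}\n\mc{A}^{H}=\mc{A}^{H}\n\mc{A}$, then taking Moore--Penrose inverses of both sides and applying the two identities gives $(\mc{A}^{\dg})^{H}\n\mc{A}^{\dg}=\mc{A}^{\dg}\n(\mc{A}^{\dg})^{H}$, which is exactly normality of $\mc{A}^{\dg}$; the converse again uses $(\mc{A}^{\dg})^{\dg}=\mc{A}$.

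I expect the main obstacle to be the verification of the reverse-order identity $(\mc{A}^{H}\n\mc{A})^{\dg}=\mc{A}^{\dg}\n(\mc{A}^{\dg})^{H}$, since the general reverse-order law $(\mc{B}\n\mc{C})^{\dg}=\mc{C}^{\dg}\n\mc{B}^{\dg}$ fails for arbitrary tensors and one must genuinely exploit the special structure $\mc{B}=\mc{A}^{H}$, $\mc{C}=\mc{A}$; every remaining step is bookkeeping with \eqref{mpeq1}--\eqref{mpeq4}. As a shortcut I could instead invoke the algebra isomorphism between $(\mb{C}^{I_{1\ldots N}\times I_{1\ldots N}},\n)$ and the matrix algebra $(\mb{C}^{I\times I},\cdot)$ with $I=I_{1}\cdots I_{N}$, which preserves conjugate transpose and the Moore--Penrose inverse and thereby transfers the classical matrix statement verbatim; but the direct verification above keeps the note self-contained.
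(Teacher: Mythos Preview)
Your argument is correct. The two preliminary facts $(\mc{A}^{\dg})^{\dg}=\mc{A}$ and $(\mc{A}^{\dg})^{H}=(\mc{A}^{H})^{\dg}$ are standard consequences of Definition~\ref{def:defmpi}, and your deduction of the Hermitian case from them is the obvious one. For the normal case, the reverse-order identities $(\mc{A}^{H}\n\mc{A})^{\dg}=\mc{A}^{\dg}\n(\mc{A}^{\dg})^{H}$ and $(\mc{A}\n\mc{A}^{H})^{\dg}=(\mc{A}^{\dg})^{H}\n\mc{A}^{\dg}$ do hold and can be verified exactly as you describe; once they are in place the equivalence is immediate.

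The paper, as supplied, does not include a written-out proof of this particular theorem, so there is no detailed argument to compare against. Your route is the natural one and matches the classical matrix proof; the alternative you mention at the end, transporting the statement through the unfolding isomorphism to $\mb{C}^{I\times I}$ with $I=I_{1}\cdots I_{N}$, is equally valid and would let you cite the matrix result directly rather than re-verify the four Penrose equations. Either approach would be acceptable here; the direct verification has the advantage of being self-contained, while the isomorphism shortcut makes transparent that nothing genuinely new is happening beyond the matrix case.
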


In general, if $\lambda\neq 0$ is an eigenvalue of a tensor $\mc{A}\in\mb{C}^{I_{1\ldots N}\times I_{1\ldots N}}$, then $1/\lambda$ may not be an eigenvalue of the tensor $\mc{A}^{\dg}$. However, if $\mc{A}$ is normal then $\lambda\neq 0$ is an eigenvalue of a tensor $\mc{A}$ implies $1/\lambda$ is an eigenvalue of the tensor $\mc{A}^{\dg}$. While if $0$ is an eigenvalue of a tensor $\mc{A}$, then $0$ is always an eigenvalue of $\mc{A}^{\dg}$ for any tensor $\mc{A}\in\mb{C}^{I_{1\ldots N}\times I_{1\ldots N}}$. This is shown in the next result.
\begin{theorem}\label{thm:egnvlrln}
    Let $\mc{A}\in \mb{C}^{I_{1\ldots N}\times I_{1\ldots N}}$. Then 
    \begin{enumerate}[i)]
        \item $0\in\sigma{(\mc{A})}$ if, and only if, $0\in\sigma(\mc{A}^{\dg})$;
        \item If $\mc{A}$ is normal and $\lambda\neq 0$, then $\lambda\in\sigma(\mc{A})$ if, and only if, $1/\lambda\in\sigma(\mc{A}^{\dg})$.
    \end{enumerate}
    \end{theorem}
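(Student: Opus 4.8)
The plan is to establish both statements by exploiting the defining equations of the Moore--Penrose inverse in Definition~\ref{def:defmpi}, specialized to the even-order square case where $\mc{A},\mc{A}^{\dg}\in\mb{C}^{I_{1\ldots N}\times I_{1\ldots N}}$ and the Einstein products $\m$ and $\n$ coincide. For part~(i), I would argue by contrapositive: if $0\notin\sigma(\mc{A})$, then $\mc{A}$ is invertible, and the invertibility makes $\mc{A}^{\dg}=\mc{A}^{-1}$ (which follows immediately from the four defining equations, since $\mc{A}^{-1}$ satisfies all of them and the Moore--Penrose inverse is unique). An invertible tensor has $0\notin\sigma(\mc{A}^{\dg})$, giving one direction. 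For the reverse direction I would apply the same reasoning using the fact that $(\mc{A}^{\dg})^{\dg}=\mc{A}$, a standard identity that follows by checking the four Penrose equations with the roles of $\mc{A}$ and $\mc{A}^{\dg}$ interchanged.

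For part~(ii), I would first fix the normality hypothesis $\mc{A}^{H}\n\mc{A}=\mc{A}\n\mc{A}^{H}$. The key structural fact I plan to use is that a normal tensor commutes with its Moore--Penrose inverse, i.e., $\mc{A}\n\mc{A}^{\dg}=\mc{A}^{\dg}\n\mc{A}$, which is the tensor analogue of the matrix result and can be extracted from the Penrose equations together with normality. Suppose $\lambda\neq 0$ and $\mc{A}\n\mc{X}=\lambda\mc{X}$ for a nonzero eigentensor $\mc{X}$. I would show $\mc{X}$ lies in the range of $\mc{A}$, so that $\mc{A}\n\mc{A}^{\dg}\n\mc{X}=\mc{X}$ (the tensor $\mc{A}\n\mc{A}^{\dg}$ acts as the identity on the range of $\mc{A}$). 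Applying $\mc{A}^{\dg}$ to the eigenvalue equation and using $\mc{A}^{\dg}\n\mc{A}\n\mc{X}=\mc{X}$ then yields $\mc{A}^{\dg}\n\mc{X}=(1/\lambda)\mc{X}$, establishing that $1/\lambda\in\sigma(\mc{A}^{\dg})$. The converse follows symmetrically by replacing $\mc{A}$ with $\mc{A}^{\dg}$ and invoking $(\mc{A}^{\dg})^{\dg}=\mc{A}$ together with the fact that $\mc{A}^{\dg}$ is normal whenever $\mc{A}$ is (the preceding theorem in this section).

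The main obstacle I anticipate is justifying, in the tensor setting, that a normal eigentensor $\mc{X}$ of $\mc{A}$ satisfies $\mc{A}^{\dg}\n\mc{A}\n\mc{X}=\mc{X}$, i.e., that the eigentensor is not annihilated by the projection $\mc{A}^{\dg}\n\mc{A}$. For a matrix this is transparent because a normal matrix is unitarily diagonalizable and eigenvectors for nonzero eigenvalues automatically lie in the row space; for tensors I would either transport this via the standard isomorphism between even-order tensors under the Einstein product and matrices (which preserves the Moore--Penrose inverse, normality, and the spectrum), or argue directly that normality forces $\ker(\mc{A})=\ker(\mc{A}^{H})$, so the decomposition $\mc{X}=\mc{A}^{\dg}\n\mc{A}\n\mc{X}+(\mc{I}-\mc{A}^{\dg}\n\mc{A})\n\mc{X}$ with the second summand in $\ker(\mc{A})$ forces $\lambda\mc{X}=\mc{A}\n\mc{X}=\lambda\,\mc{A}^{\dg}\n\mc{A}\n\mc{X}$, whence $\mc{X}=\mc{A}^{\dg}\n\mc{A}\n\mc{X}$ since $\lambda\neq 0$. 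I expect the cleanest route is the matrix-isomorphism argument, which reduces the entire theorem to the well-known matrix facts, with part~(i) then following as the $\lambda=0$ boundary case handled separately.
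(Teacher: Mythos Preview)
The paper does not include a proof of this theorem in the provided source; the statement is followed directly by the remark that part~(ii) fails without normality and by the illustrating example. So there is nothing to compare against, and your proposal must stand on its own.

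Your plan is sound. Part~(i) via the contrapositive and the identity $(\mc{A}^{\dg})^{\dg}=\mc{A}$ is correct: $0\notin\sigma(\mc{A})$ is equivalent (through the matricization isomorphism, or via the tensor determinant results you could cite from \cite{liang2019.1}) to invertibility of $\mc{A}$, in which case $\mc{A}^{\dg}=\mc{A}^{-1}$ is invertible too. For part~(ii), your main line of argument is the right one: from $\mc{A}\n\mc{X}=\lambda\mc{X}$ with $\lambda\neq 0$ you get $\mc{X}=(1/\lambda)\mc{A}\n\mc{X}\in\mathrm{range}(\mc{A})$, hence $\mc{A}\n\mc{A}^{\dg}\n\mc{X}=\mc{X}$; normality gives the EP property $\mc{A}^{\dg}\n\mc{A}=\mc{A}\n\mc{A}^{\dg}$, so $\mc{A}^{\dg}\n\mc{A}\n\mc{X}=\mc{X}$; applying $\mc{A}^{\dg}$ to the eigen-equation then yields $\mc{A}^{\dg}\n\mc{X}=(1/\lambda)\mc{X}$. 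The converse via the preceding theorem (normality of $\mc{A}^{\dg}$) and $(\mc{A}^{\dg})^{\dg}=\mc{A}$ is clean.

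One small wrinkle: in your alternative ``direct'' argument at the end, the displayed chain $\lambda\mc{X}=\mc{A}\n\mc{X}=\lambda\,\mc{A}^{\dg}\n\mc{A}\n\mc{X}$ does not follow from the decomposition alone; the second equality is not what the decomposition gives you (you get $\mc{A}\n\mc{X}=\mc{A}\n(\mc{A}^{\dg}\n\mc{A}\n\mc{X})$, not $\lambda\,\mc{A}^{\dg}\n\mc{A}\n\mc{X}$). You still need the EP property, so this route is not really an alternative to your main argument---just drop it and keep the first version. The matrix-isomorphism shortcut you mention would also work and is the fastest way to import all the needed facts at once.
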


Note that the result in Theorem \ref{thm:egnvlrln} ii) does not hold, if $\mc{A}$ is not a normal tensor.
\begin{example}
Consider the tensor $\mc{A}\in \mb{C}^{3\times2\times 3\times 2}$ as below
\begin{center}
{\scriptsize\begin{tabular}{cc|cc|cc|cc|cc|cc}
\hline
     \multicolumn{2}{c}{$\mc{A}(:,:,1,1)$} & \multicolumn{2}{c}{$\mc{A}(:,:,2,1)$} & \multicolumn{2}{c}{$\mc{A}(:,:,3,1)$} & \multicolumn{2}{c}{$\mc{A}(:,:,1,2)$} & \multicolumn{2}{c}{$\mc{A}(:,:,2,2)$} & \multicolumn{2}{c}{$\mc{A}(:,:,3,2)$}  \\
\hline
\hline
1&0&1&0&1&0&1&0&1&0&1&0\\
0&0&0&0&0&0&0&0&0&0&0&0\\
0&0&0&0&0&0&0&0&0&0&0&0\\
\hline
\end{tabular}}.
\end{center}
Then it's Moore--Penrose inverse $\mc{A}^{\dg}\in\mb{C}^{3\times2\times 3\times2}$ becomes
\begin{center}
{\scriptsize\begin{tabular}{cc|cc|cc|cc|cc|cc}
\hline
     \multicolumn{2}{c}{$\mc{A}^{\dg}(:,:,1,1)$} & \multicolumn{2}{c}{$\mc{A}^{\dg}(:,:,2,1)$} & \multicolumn{2}{c}{$\mc{A}^{\dg}(:,:,3,1)$} & \multicolumn{2}{c}{$\mc{A}^{\dg}(:,:,1,2)$} & \multicolumn{2}{c}{$\mc{A}^{\dg}(:,:,2,2)$} & \multicolumn{2}{c}{$\mc{A}^{\dg}(:,:,3,2)$}  \\
\hline
\hline
0.1667&0.1667&0&0&0&0&0&0&0&0&0&0\\
0.1667&0.1667&0&0&0&0&0&0&0&0&0&0\\
0.1667&0.1667&0&0&0&0&0&0&0&0&0&0\\
\hline
\end{tabular}}.
\end{center}
Now, $\sigma(\mc{A})=\{0,1\}$ and $\sigma(\mc{A}^{\dg})=\{0,0.1667\}$. Observe that here $\mc{A}\n\mc{A}^{\dg}\neq\mc{A}^{\dg}\n\mc{A}$ and $1$ is an eigenvalue of $\mc{A}$ while $1$ is not an eigenvalue of $\mc{A}^{\dg}$.
\end{example}
Next result confirms that $0$ contained in the numerical range of a tensor if, and only if, it is contained in numerical range of its Moore--Penrose inverse.
\begin{theorem}\label{thm:nrmpinr}
Let $\mc{A}\in \mb{C}^{I_{1\ldots N}\times I_{1\ldots N}}$. Then $0\in W(\mc{A})$ if, and only if, $0\in W(\mc{A}^{\dg})$.
\end{theorem}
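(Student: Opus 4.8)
The plan is to prove the biconditional $0\in W(\mc{A})\iff 0\in W(\mc{A}^{\dg})$ by unwinding the definition of the numerical range and exploiting the rank-range structure of the Moore--Penrose inverse. First I would recall from Definition \ref{def:nrtnsr} that $0\in W(\mc{A})$ means there is a unit tensor $\mc{X}\in\mb{C}^{I_{1\ldots N}}$ with $\left\langle\mc{A}\n\mc{X},\mc{X}\right\rangle=0$, i.e.\ $\mc{X}^{H}\n\mc{A}\n\mc{X}=0$. By symmetry of the statement in $\mc{A}$ and $\mc{A}^{\dg}$ (using the involution $(\mc{A}^{\dg})^{\dg}=\mc{A}$), it suffices to prove only one implication, say $0\in W(\mc{A})\implies 0\in W(\mc{A}^{\dg})$, and then apply it with $\mc{A}$ replaced by $\mc{A}^{\dg}$ for the reverse direction.

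The key idea I would use is the decomposition of the ambient space into the range of $\mc{A}^{H}\n\mc{A}$ (equivalently the range of $\mc{A}^{\dg}$) and its orthogonal complement, which is the null space of $\mc{A}$. Concretely, the defining equations \eqref{mpeq1}--\eqref{mpeq4} give that $\mc{P}:=\mc{A}^{\dg}\n\mc{A}$ and $\mc{Q}:=\mc{A}\n\mc{A}^{\dg}$ are Hermitian idempotents (orthogonal projectors) via the Einstein product, with $\mc{P}$ projecting onto the range of $\mc{A}^{H}$ and $\mc{I}-\mc{P}$ projecting onto the null space of $\mc{A}$. I would split into two cases according to whether $\mc{A}$ has a nontrivial null space. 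If $\mc{A}$ is not invertible, then there is a unit tensor $\mc{X}$ with $\mc{A}\n\mc{X}=\mc{O}$, so automatically $\left\langle\mc{A}\n\mc{X},\mc{X}\right\rangle=0$ giving $0\in W(\mc{A})$; by part i) of Theorem \ref{thm:egnvlrln}, $0\in\sigma(\mc{A}^{\dg})$, and since $\sigma(\mc{A}^{\dg})\subseteq W(\mc{A}^{\dg})$ by Theorem \ref{thm:subset}, we get $0\in W(\mc{A}^{\dg})$. This handles the singular case cleanly in both directions at once.

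The substantive case is when $\mc{A}$ is invertible, where the null space is trivial and the above shortcut is unavailable. Here $0\in W(\mc{A})$ forces a genuinely nonzero witness $\mc{X}$ with $\mc{X}^{H}\n\mc{A}\n\mc{X}=0$, and I must manufacture a witness for $\mc{A}^{-1}$. The natural candidate is $\mc{Y}:=\mc{A}\n\mc{X}$ (nonzero since $\mc{A}$ is injective), for then $\left\langle\mc{A}^{-1}\n\mc{Y},\mc{Y}\right\rangle=\mc{Y}^{H}\n\mc{A}^{-1}\n\mc{A}\n\mc{X}=(\mc{A}\n\mc{X})^{H}\n\mc{X}=\overline{\mc{X}^{H}\n\mc{A}^{H}\n\mc{X}}$; I would then relate $\mc{X}^{H}\n\mc{A}^{H}\n\mc{X}=\overline{\mc{X}^{H}\n\mc{A}\n\mc{X}}=0$ using the conjugation rule for the inner product. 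After normalizing $\mc{Y}$, the characterization \eqref{eqn:nraltdfn} gives $0\in W(\mc{A}^{-1})=W(\mc{A}^{\dg})$.

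The main obstacle I anticipate is verifying the algebraic manipulation $\mc{Y}^{H}\n\mc{A}^{-1}\n\mc{Y}=\overline{\mc{X}^{H}\n\mc{A}\n\mc{X}}$ rigorously via the Einstein product, since one must be careful about the conjugate-transpose conventions (the paper fixes $\mc{A}^{H}$ to partition after the first $M$ modes) and the associativity of $\n$ when inserting $\mc{A}^{-1}\n\mc{A}=\mc{I}$. One must confirm that $\left\langle\mc{Z},\mc{W}\right\rangle=\overline{\left\langle\mc{W},\mc{Z}\right\rangle}$ holds for this tensor inner product so that $\mc{X}^{H}\n\mc{A}\n\mc{X}=0$ implies its conjugate $\mc{X}^{H}\n\mc{A}^{H}\n\mc{X}=0$; this is a routine consequence of $(\mc{A}\n\mc{B})^{H}=\mc{B}^{H}\n\mc{A}^{H}$ and $(\mc{A}^{H})^{H}=\mc{A}$, but it is the one place where the index bookkeeping could go wrong. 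Once this conjugation identity is in hand, the remaining steps are purely formal substitutions using associativity and the invertibility of $\mc{A}$.
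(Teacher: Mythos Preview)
The paper states Theorem \ref{thm:nrmpinr} without proof, so there is no argument in the source to compare against. Your proposal is correct: the reduction to one direction via $(\mc{A}^{\dg})^{\dg}=\mc{A}$, the singular case handled through Theorem \ref{thm:egnvlrln} i) and Theorem \ref{thm:subset}, and the invertible case via the witness $\mc{Y}=\mc{A}\n\mc{X}$ all go through. One cosmetic slip: in your displayed chain you write $(\mc{A}\n\mc{X})^{H}\n\mc{X}=\overline{\mc{X}^{H}\n\mc{A}^{H}\n\mc{X}}$, but in fact $(\mc{A}\n\mc{X})^{H}\n\mc{X}=\mc{X}^{H}\n\mc{A}^{H}\n\mc{X}$ directly, and it is this quantity that equals $\overline{\mc{X}^{H}\n\mc{A}\n\mc{X}}=0$ by conjugate symmetry of the inner product --- exactly what you say in the following sentence. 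With that correction the argument is complete.
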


The next result shows that $W(\mc{A})=W(\mc{A}^{H})$ is sufficient to confirm that set $W(\mc{A})$ and  $\alpha^{2}W(\mc{A}^{\dg})$ are disjoint, where `$\alpha$' is a singular value of $\mc{A}$ (positive square roots of eigenvalues of $\mc{A}^{H}\n\mc{A}$). 
\begin{theorem}
    Let $\mc{A}\in \mb{C}^{I_{1\ldots N}\times I_{1\ldots N}}$ such that $W(\mc{A})=W(\mc{A}^{H})$. Then 
    \begin{equation*}
        W(\mc{A})\bigcap \alpha^{2}W(\mc{A}^{\dg})\neq \emptyset
    \end{equation*}
where `$\alpha$' is a singular value of $\mc{A}$.
    \end{theorem}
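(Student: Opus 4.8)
The plan is to exploit a singular value decomposition of $\mc{A}$ via the Einstein product together with the symmetry of $W(\mc{A})$ forced by the hypothesis. First I would record the consequence of the assumption: by the earlier theorem $W(\mc{A}^{H})=\overline{W(\mc{A})}$, so the hypothesis $W(\mc{A})=W(\mc{A}^{H})$ gives $W(\mc{A})=\overline{W(\mc{A})}$; that is, $W(\mc{A})$ is symmetric about the real axis (closed under complex conjugation). This is the only place where the hypothesis enters the argument.

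Next, fix a nonzero singular value $\alpha$ of $\mc{A}$ and let $\mc{U}_{1},\mc{V}_{1}\in\mb{C}^{I_{1\ldots N}}$ be the corresponding unit left and right singular tensors, so that $\mc{A}\n\mc{V}_{1}=\alpha\,\mc{U}_{1}$ and, from the Moore--Penrose structure, $\mc{A}^{\dg}\n\mc{U}_{1}=\tfrac{1}{\alpha}\mc{V}_{1}$. Writing $c=\langle\mc{U}_{1},\mc{V}_{1}\rangle$, a direct computation using linearity of the inner product in its first slot gives $\langle\mc{A}\n\mc{V}_{1},\mc{V}_{1}\rangle=\alpha c$, and using the conjugate symmetry $\langle\mc{V}_{1},\mc{U}_{1}\rangle=\overline{c}$ gives $\langle\mc{A}^{\dg}\n\mc{U}_{1},\mc{U}_{1}\rangle=\tfrac{1}{\alpha}\overline{c}$. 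Hence $\alpha c\in W(\mc{A})$ and, multiplying the second quantity by $\alpha^{2}$, we obtain $\alpha\overline{c}\in\alpha^{2}W(\mc{A}^{\dg})$.

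To close the argument I would invoke the symmetry from the first step: since $\alpha$ is real, $\overline{\alpha c}=\alpha\overline{c}$, and $\alpha c\in W(\mc{A})=\overline{W(\mc{A})}$ forces $\alpha\overline{c}\in W(\mc{A})$ as well. Therefore $\alpha\overline{c}$ lies simultaneously in $W(\mc{A})$ and in $\alpha^{2}W(\mc{A}^{\dg})$, so the intersection is nonempty. The degenerate case $\alpha=0$ is handled separately and trivially: then $\alpha^{2}W(\mc{A}^{\dg})=\{0\}$, while a unit null singular tensor $\mc{V}_{1}$ with $\mc{A}\n\mc{V}_{1}=\mc{O}$ yields $0=\langle\mc{A}\n\mc{V}_{1},\mc{V}_{1}\rangle\in W(\mc{A})$, so again $0$ lies in both sets.

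The main obstacle I anticipate is justifying the two structural facts about the singular tensors, namely the existence of the tensor singular value decomposition via the Einstein product and the action $\mc{A}^{\dg}\n\mc{U}_{1}=\tfrac{1}{\alpha}\mc{V}_{1}$ of the Moore--Penrose inverse on a left singular tensor; once these are in place, the remaining computation is routine. I would establish the latter either directly from the four defining equations \eqref{mpeq1}--\eqref{mpeq4} applied to the singular tensors, or by writing $\mc{A}=\mc{U}\n\Sigma\n\mc{V}^{H}$ and $\mc{A}^{\dg}=\mc{V}\n\Sigma^{\dg}\n\mc{U}^{H}$ and reading off the action on the relevant singular tensor.
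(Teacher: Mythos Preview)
The paper as supplied does not actually display a proof of this theorem---it is stated and immediately followed by the remark and counterexample---so there is nothing to compare your argument against line by line. That said, your proposal is correct and follows what is the natural route: the identity $W(\mc{A}^{H})=\overline{W(\mc{A})}$ from the earlier theorem turns the hypothesis into conjugate-symmetry of $W(\mc{A})$; the SVD gives unit tensors with $\mc{A}\n\mc{V}_{1}=\alpha\,\mc{U}_{1}$ and $\mc{A}^{\dg}\n\mc{U}_{1}=\alpha^{-1}\mc{V}_{1}$; and evaluating the two Rayleigh-type quotients produces the conjugate pair $\alpha c$ and $\alpha\overline{c}$ lying in $W(\mc{A})$ and $\alpha^{2}W(\mc{A}^{\dg})$ respectively, which the symmetry then identifies. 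The degenerate case $\alpha=0$ is handled exactly as you say. The only points you flag as needing justification---existence of the tensor SVD under the Einstein product and the formula $\mc{A}^{\dg}=\mc{V}\n\Sigma^{\dg}\n\mc{U}^{H}$---are standard in this setting and are used elsewhere in the paper (e.g.\ the polar decomposition in Theorem~\ref{thm:untry1} and the singular-value language preceding the present theorem), so citing those suffices.
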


We want to bring the readers attention that if $W(\mc{A})=W(\mc{A}^{H})$ is omitted from the above result, then the result may not hold.
\begin{example}
Consider a tensor $\mc{A}\in \mb{C}^{3\times 2\times 3\times 2}$ such that 
\begin{center}
    \begin{tabular}{cc|cc|cc|cc|cc|cc}
    \hline
    \multicolumn{2}{c}{$\mc{A}(:,:,1,1)$} & \multicolumn{2}{c}{$\mc{A}(:,:,2,1)$} &
    \multicolumn{2}{c}{$\mc{A}(:,:,3,1)$} &
    \multicolumn{2}{c}{$\mc{A}(:,:,1,2)$} &
    \multicolumn{2}{c}{$\mc{A}(:,:,2,2)$} &
    \multicolumn{2}{c}{$\mc{A}(:,:,3,2)$}\\
    \hline
    $1+i$ & $0$ & $0$ & $0$ & $0$ & $0$ & $0$& $4$& $0$ & $0$ & $0$ & $0$ \\
    $0$ & $0$ & $i$ & $0$ & $0$ & $0$ & $0$ & $0$ & $0$ & $5+i$ &$0$ & $0$ \\
    $0$ & $0$ & $0$ & $0$ & $3+i$ & $0$ & $0$ & $0$ & $0$ & $0$ & $0$ & $6+i$ \\
    \hline
\end{tabular}.
\end{center}
Then, the conjugate transpose of $\mc{A}$, $\mc{A}^{H}\in \mb{C}^{3\times 2\times 3\times 2}$, is 
\begin{center}
    \begin{tabular}{cc|cc|cc|cc|cc|cc}
    \hline
    \multicolumn{2}{c}{$\mc{A}^{H}(:,:,1,1)$} & \multicolumn{2}{c}{$\mc{A}^{H}(:,:,2,1)$} &
    \multicolumn{2}{c}{$\mc{A}^{H}(:,:,3,1)$} &
    \multicolumn{2}{c}{$\mc{A}^{H}(:,:,1,2)$} &
    \multicolumn{2}{c}{$\mc{A}^{H}(:,:,2,2)$} &
    \multicolumn{2}{c}{$\mc{A}^{H}(:,:,3,2)$}\\
    \hline
    $1-i$ & $0$ & $0$ & $0$ & $0$ & $0$ & $0$& $4$ & $0$ & $0$ & $0 $& $0$ \\
    $0$ & $0$ & $-i$ & $0$ & $0$ & $0$ & $0$ & $0$ & $0$ & $5-i$ &$0$ & $0$ \\
    $0$ & $0$ & $0$ & $0$ & $3-i$ & $0$ & $0$ & $0$ & $0$ & $0$ & $0$ & $6-i$ \\
    \hline
\end{tabular}.
\end{center}
Thus, $W(\mc{A})\neq W(\mc{A}^{H})$. The set of singular values of the tensor $\mc{A}$ is $\{1,\sqrt{2}, \sqrt{10}, 4, \sqrt{26},\sqrt{37}\}$. Now, the Moore--Penrose inverse of $\mc{A}$, $\mc{A}^{\dg}\in\mb{C}^{3\times 2\times 3\times 2}$, is
\begin{center}
    \begin{tabular}{cc|cc|cc|cc|cc|cc}
    \hline
    \multicolumn{2}{c}{$\mc{A}^{\dg}(:,:,1,1)$} & \multicolumn{2}{c}{$\mc{A}^{\dg}(:,:,2,1)$} &
    \multicolumn{2}{c}{$\mc{A}^{\dg}(:,:,3,1)$} &
    \multicolumn{2}{c}{$\mc{A}^{\dg}(:,:,1,2)$} &
    \multicolumn{2}{c}{$\mc{A}^{\dg}(:,:,2,2)$} &
    \multicolumn{2}{c}{$\mc{A}^{\dg}(:,:,3,2)$}\\
    \hline
    $0.5-0.5i$ & $0$ & $0$ & $0$ & $0$ & $0$ & $0$& $0.25$ & $0$ & $0$ & $0 $& $0$ \\
    $0$ & $0$ & $-i$ & $0$ & $0$ & $0$ & $0$ & $0$ & $0$ & $(5-i)/26$ &$0$ & $0$ \\
    $0$ & $0$ & $0$ & $0$ & $0.3-0.1i$ & $0$ & $0$ & $0$ & $0$ & $0$ & $0$ & $(6-i)/37$ \\
    \hline
\end{tabular}.
\end{center}
From Figure \ref{fig:nrintersection} it is clear that $W(\mc{A})\bigcap \alpha^{2}W(\mc{A}^{\dg})=\emptyset$ for $\alpha\in \{1, \sqrt{2}, \sqrt{10}, \sqrt{26},\sqrt{37}\}$ and when $\alpha=4$ we have $W(\mc{A})\bigcap \alpha^{2}W(\mc{A}^{\dg})=\{4\}\neq\emptyset$.
\begin{figure}[h!]
     \centering
     \begin{subfigure}[b]{0.4\textwidth}
         \centering
         \includegraphics[width=7cm,height=3.5cm]{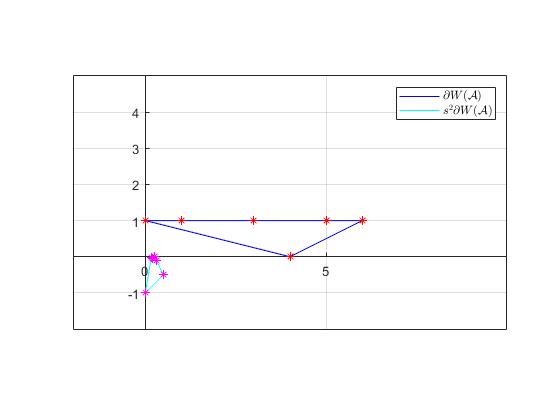}
         \caption{$\alpha^{2}=1$}
         \label{fig:s=1}
     \end{subfigure}
     \hfill
     \begin{subfigure}[b]{0.4\textwidth}
         \centering
         \includegraphics[width=7cm,height=3.5cm]{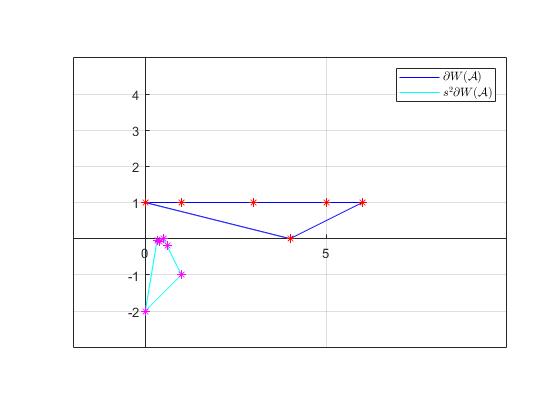}
         \caption{$\alpha^{2}=2$}
         \label{fig:s=2}
     \end{subfigure}
     \hfill
     \begin{subfigure}[b]{0.4\textwidth}
         \centering
         \includegraphics[width=7cm,height=3.5cm]{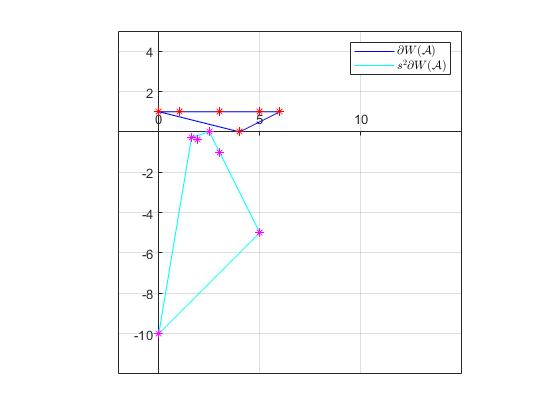}
         \caption{$\alpha^{2}=10$}
         \label{fig:s=10}
     \end{subfigure}
     \hfill
     \begin{subfigure}[b]{0.4\textwidth}
         \centering
         \includegraphics[width=7cm,height=3.5cm]{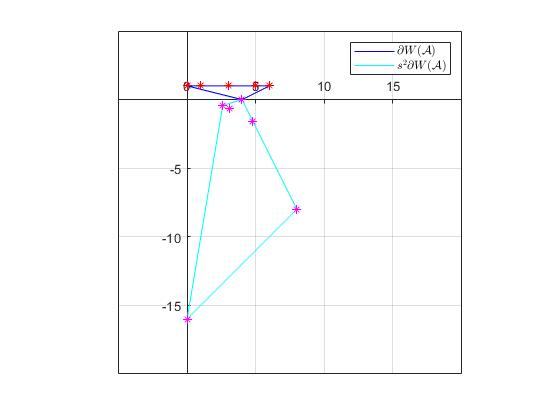}
         \caption{$\alpha^{2}=16$}
         \label{fig:s=16}
     \end{subfigure}
     \begin{subfigure}[b]{0.4\textwidth}
         \centering
    \includegraphics[width=7cm,height=3.5cm]{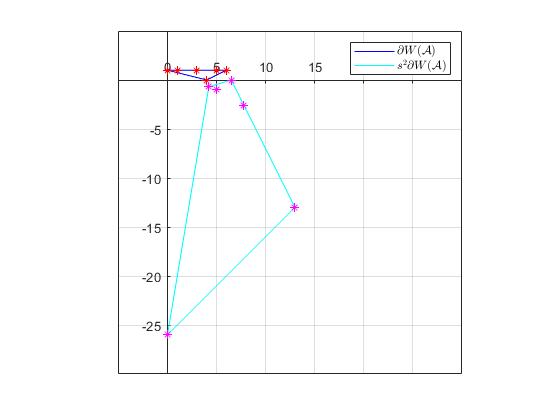}
         \caption{$\alpha^{2}=26$}
         \label{fig:s=26}
     \end{subfigure}
     \begin{subfigure}[b]{0.4\textwidth}
         \centering
    \includegraphics[width=7cm,height=3.5cm]{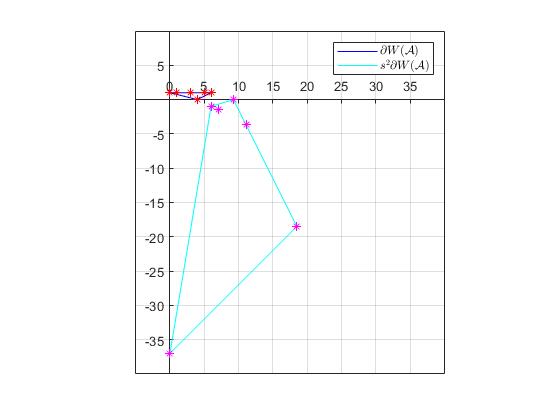}
         \caption{$\alpha^{2}=37$}
         \label{fig:s=37}
     \end{subfigure}
        \caption{Boundaries of numerical ranges of $W(\mc{A})$ and $\alpha^{2}W(\mc{A}^{\dg})$}
        \label{fig:nrintersection}
\end{figure}
\end{example}
Next we establish a relation between $\sigma(\mc{A})$, $W(\mc{A})$ and $\dfrac{1}{W(\mc{A})}$. Recall that a tensor $\mc{A}$ is called an EP-tensor if it satisfies $\mc{A}\n\mc{A}^{\dg}=\mc{A}^{\dg}\n\mc{A}$. 
\begin{theorem}
Let $\mc{A}\in \mb{C}^{I_{1\ldots N}\times I_{1\ldots N}}$ be an EP-tensor. Then, 
\begin{equation*}
    \sigma(\mc{A})\subset W(\mc{A})\bigcap \dfrac{1}{W(\mc{A}^{\dg})}.
\end{equation*}
\end{theorem}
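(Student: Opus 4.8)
The plan is to prove the two inclusions $\sigma(\mc{A})\subseteq W(\mc{A})$ and $\sigma(\mc{A})\subseteq \tfrac{1}{W(\mc{A}^{\dg})}$ separately, whose intersection is exactly the asserted set. The first inclusion is nothing but Theorem \ref{thm:subset}, so the entire content of the proof lies in the second one: I must show that every $\lambda\in\sigma(\mc{A})$ can be written as $\lambda=1/w$ for some $w\in W(\mc{A}^{\dg})$. I would split the argument according to whether $\lambda$ vanishes.

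The nonzero case is the heart of the matter. Fix $\lambda\neq 0$ in $\sigma(\mc{A})$ with a nonzero eigentensor $\mc{X}$, so $\mc{A}\n\mc{X}=\lambda\mc{X}$, equivalently $\mc{X}=\lambda^{-1}\mc{A}\n\mc{X}$. First I would use the Moore--Penrose equation \eqref{mpeq1}, $\mc{A}\n\mc{A}^{\dg}\n\mc{A}=\mc{A}$, together with associativity of the Einstein product to show that $\mc{A}\n\mc{A}^{\dg}$ fixes $\mc{X}$: indeed $\mc{A}\n\mc{A}^{\dg}\n\mc{X}=\lambda^{-1}(\mc{A}\n\mc{A}^{\dg}\n\mc{A})\n\mc{X}=\lambda^{-1}\mc{A}\n\mc{X}=\mc{X}$. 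Next I invoke the EP hypothesis $\mc{A}\n\mc{A}^{\dg}=\mc{A}^{\dg}\n\mc{A}$ and apply $\mc{A}^{\dg}$ to the eigen-equation, obtaining $\lambda\,\mc{A}^{\dg}\n\mc{X}=\mc{A}^{\dg}\n\mc{A}\n\mc{X}=\mc{A}\n\mc{A}^{\dg}\n\mc{X}=\mc{X}$. Hence $\mc{A}^{\dg}\n\mc{X}=\lambda^{-1}\mc{X}$ with $\mc{X}\neq\mc{O}$, so $1/\lambda\in\sigma(\mc{A}^{\dg})\subseteq W(\mc{A}^{\dg})$ by Theorem \ref{thm:subset}, which is precisely $\lambda\in\tfrac{1}{W(\mc{A}^{\dg})}$.

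For $\lambda=0$ I would appeal directly to Theorem \ref{thm:egnvlrln}(i): $0\in\sigma(\mc{A})$ forces $0\in\sigma(\mc{A}^{\dg})\subseteq W(\mc{A}^{\dg})$, and under the reciprocal-set convention this likewise places $0$ in $\tfrac{1}{W(\mc{A}^{\dg})}$. Combining the two cases gives $\sigma(\mc{A})\subseteq\tfrac{1}{W(\mc{A}^{\dg})}$, and intersecting with the trivial inclusion into $W(\mc{A})$ yields the claim. I expect the main obstacle to be precisely this $\lambda=0$ case, where the symbol $\tfrac{1}{W(\mc{A}^{\dg})}$ is not literally defined by $1/0$ and one must lean on Theorem \ref{thm:egnvlrln}(i) and the standing convention rather than on the reciprocal-of-eigenvalue identity; the nonzero case is purely algebraic and needs only careful bookkeeping of the order of factors and associativity of $\n$ in the manipulations with \eqref{mpeq1} and the EP commutation.
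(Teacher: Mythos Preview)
The paper states this theorem without proof, so there is no argument to compare against; your proposal is the natural one and almost certainly what the authors intend, since it leans precisely on the surrounding results they develop (Theorem~\ref{thm:subset}, equation~\eqref{mpeq1}, the EP commutation, and Theorem~\ref{thm:egnvlrln}). Your treatment of the nonzero case is correct: from $\mc{A}\n\mc{X}=\lambda\mc{X}$ with $\lambda\neq 0$ you deduce $\mc{A}\n\mc{A}^{\dg}\n\mc{X}=\mc{X}$ via \eqref{mpeq1}, then use $\mc{A}^{\dg}\n\mc{A}=\mc{A}\n\mc{A}^{\dg}$ to get $\mc{A}^{\dg}\n\mc{X}=\lambda^{-1}\mc{X}$, whence $1/\lambda\in\sigma(\mc{A}^{\dg})\subseteq W(\mc{A}^{\dg})$ by Theorem~\ref{thm:subset}. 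This is clean and needs no change.

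Your caveat about $\lambda=0$ is well placed and, in fact, points to a genuine imprecision in the paper rather than a gap in your reasoning. Having $0\in\sigma(\mc{A}^{\dg})\subseteq W(\mc{A}^{\dg})$ does \emph{not} yield $0\in\tfrac{1}{W(\mc{A}^{\dg})}$ under the ordinary meaning of that set (which would instead contain $\infty$, not $0$, since $W(\mc{A}^{\dg})$ is bounded). The paper never defines the symbol $\tfrac{1}{W(\mc{A}^{\dg})}$, and the inclusion as written can only hold literally when $0\notin\sigma(\mc{A})$, i.e.\ when the EP-tensor $\mc{A}$ is invertible. So either read the statement as implicitly restricted to nonzero eigenvalues, or adopt the Riemann-sphere convention and regard the containment as living in $\mb{C}\cup\{\infty\}$; in either reading your argument is complete.
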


Let $\mc{A},\mc{B}\in\mb{C}^{I_{1\ldots N}\times I_{1\ldots N}}$, then we define a block tensor \cite{behera2017}
\begin{equation}
    \mc{A}\oplus \mc{B}=\begin{bmatrix}\mc{A}&\mc{O}\\\mc{O}&\mc{B} \end{bmatrix}\in\mb{C}^{J_{1\ldots N}\times J_{1\ldots N}},
\end{equation}
where $\mc{O}\in\mb{C}^{I_{1\ldots N}\times I_{1\ldots N}}$ and $J_{i}=2I_{i}$ where $i\in\{1,2,\ldots,N\}$.
Next result provides a procedure to calculate Moore--Penrose inverse of a special tensor.

\begin{theorem}
Let $\{\mc{U}_{1},~\mc{U}_{2},\ldots,~\mc{U}_{r}\}$ and $\{\mc{V}_{1},~\mc{V}_{2},\ldots,~\mc{V}_{r}\}$ be two orthonormal subsets of $\mb{C}^{I_{1\ldots N}}$. If $\mc{A}=\mc{U}_{1}\n\mc{V}_{1}^{H}+\mc{U}_{2}\n\mc{V}_{2}^{H}+\ldots+\mc{U}_{r}\n\mc{V}_{r}^{H}$, then $\mc{A}^{\dg}=\mc{V}_{1}\n\mc{U}_{1}^{H}+\mc{V}_{2}\n\mc{U}_{2}^{H}+\ldots+\mc{V}_{r}\n\mc{U}_{r}^{H}$, and $W(\mc{A}^{\dg})=W(\mc{A}^{H})$.
\end{theorem}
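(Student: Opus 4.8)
The plan is to exhibit the candidate tensor $\mc{B}=\mc{V}_{1}\n\mc{U}_{1}^{H}+\cdots+\mc{V}_{r}\n\mc{U}_{r}^{H}$, verify that it satisfies the four defining equations \eqref{mpeq1}--\eqref{mpeq4} of the Moore--Penrose inverse, and then invoke uniqueness of $\mc{A}^{\dg}$. Since $\mc{A}$ and $\mc{B}$ are square tensors in $\mb{C}^{I_{1\ldots N}\times I_{1\ldots N}}$, both products $\m$ and $\n$ appearing in \eqref{mpeq1}--\eqref{mpeq4} coincide with $*_N$, so I write everything with $\n$. The computational engine throughout is the associativity of the Einstein product together with the two scalar identities coming from orthonormality, namely $\mc{U}_{i}^{H}\n\mc{U}_{j}=\langle\mc{U}_{j},\mc{U}_{i}\rangle=\delta_{ij}$ and $\mc{V}_{i}^{H}\n\mc{V}_{j}=\langle\mc{V}_{j},\mc{V}_{i}\rangle=\delta_{ij}$, where each $\mc{U}_{i},\mc{V}_{i}$ is regarded as an order-$N$ ``column'' tensor and $\mc{U}_{i}^{H},\mc{V}_{i}^{H}$ as the corresponding ``row'' tensors, so that $\mc{U}_{i}\n\mc{V}_{i}^{H}$ is an order-$2N$ square tensor while $\mc{V}_{i}^{H}\n\mc{V}_{j}$ is a scalar. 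I will also record the elementary rank-one rule $(\mc{U}_{i}\n\mc{V}_{i}^{H})^{H}=\mc{V}_{i}\n\mc{U}_{i}^{H}$, immediate from the entrywise description of the Einstein product and of the $H$-operation (partitioned after the first $N$ modes).

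First I would compute the two one-sided products. Regrouping by associativity and collapsing the cross terms with $\mc{V}_{i}^{H}\n\mc{V}_{j}=\delta_{ij}$ gives
\begin{equation*}
\mc{A}\n\mc{B}=\sum_{i,j}\mc{U}_{i}\n(\mc{V}_{i}^{H}\n\mc{V}_{j})\n\mc{U}_{j}^{H}=\sum_{i}\mc{U}_{i}\n\mc{U}_{i}^{H},
\end{equation*}
and symmetrically, using $\mc{U}_{i}^{H}\n\mc{U}_{j}=\delta_{ij}$,
\begin{equation*}
\mc{B}\n\mc{A}=\sum_{j}\mc{V}_{j}\n\mc{V}_{j}^{H}.
\end{equation*}
Each summand $\mc{U}_{i}\n\mc{U}_{i}^{H}$ and $\mc{V}_{j}\n\mc{V}_{j}^{H}$ is self-conjugate-transpose by the rank-one rule above, so both $\mc{A}\n\mc{B}$ and $\mc{B}\n\mc{A}$ are Hermitian; this establishes \eqref{mpeq3} and \eqref{mpeq4}. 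Multiplying once more and again using orthonormality to annihilate cross terms yields $\mc{A}\n\mc{B}\n\mc{A}=\sum_{i}\mc{U}_{i}\n\mc{V}_{i}^{H}=\mc{A}$ and $\mc{B}\n\mc{A}\n\mc{B}=\sum_{j}\mc{V}_{j}\n\mc{U}_{j}^{H}=\mc{B}$, which are \eqref{mpeq1} and \eqref{mpeq2}. By uniqueness of the tensor satisfying \eqref{mpeq1}--\eqref{mpeq4}, I conclude $\mc{A}^{\dg}=\mc{B}$.

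For the final claim I would simply observe that the same rank-one rule gives
\begin{equation*}
\mc{A}^{H}=\sum_{i}(\mc{U}_{i}\n\mc{V}_{i}^{H})^{H}=\sum_{i}\mc{V}_{i}\n\mc{U}_{i}^{H}=\mc{B}=\mc{A}^{\dg}.
\end{equation*}
Hence $\mc{A}$ is a partial isometry, $\mc{A}^{\dg}$ and $\mc{A}^{H}$ are literally the same tensor, and therefore $W(\mc{A}^{\dg})=W(\mc{A}^{H})$ is immediate from Definition \ref{def:nrtnsr}. I do not expect a genuine obstacle: the only points requiring care are the bookkeeping of the Einstein-product indices when the $\mc{U}_{i},\mc{V}_{i}$ are treated as order-$N$ ``vectors'' and making sure the orthonormality relations are applied in the correct contraction slot. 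The substantive content is really the identity $\mc{A}^{H}=\mc{A}^{\dg}$, after which the numerical-range statement is a formality.
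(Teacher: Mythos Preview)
Your argument is correct. The paper states this theorem without proof, so there is nothing to compare against; your direct verification of the four Moore--Penrose equations via the orthonormality relations $\mc{U}_{i}^{H}\n\mc{U}_{j}=\delta_{ij}$, $\mc{V}_{i}^{H}\n\mc{V}_{j}=\delta_{ij}$ is exactly the natural route, and your closing observation that $\mc{A}^{H}=\sum_{i}\mc{V}_{i}\n\mc{U}_{i}^{H}=\mc{A}^{\dg}$ as tensors makes the numerical-range assertion an immediate tautology rather than a separate claim requiring work.
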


The following result gives an inequality between product of spectral norm of a tensor with its Moore--Penrose inverse and their product of numerical radius.
\begin{theorem}\label{numrclrdsineqa}
Let $\mc{O}\neq \mc{A}\in \mb{C}^{I_{1\ldots N}\times I_{1\ldots N}}$. Then, for the spectral norm $\|\cdot\|$,
\begin{equation*}
    1\leq \|\mc{A}\|\|\mc{A}^{\dg}\|\leq 4w(\mc{A})w(\mc{A}^{\dg}).
\end{equation*}
\end{theorem}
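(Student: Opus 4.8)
The plan is to split the claimed chain into its two inequalities, the right one being an immediate consequence of the basic numerical radius bound and the left one following from the defining identity of the Moore--Penrose inverse together with submultiplicativity of the spectral norm. First I would dispose of the right-hand inequality. Theorem \ref{thm:bscnrineq} gives $\tfrac{1}{2}\|\mc{A}\|\leq w(\mc{A})\leq\|\mc{A}\|$, hence $\|\mc{A}\|\leq 2w(\mc{A})$, and applying the same estimate to $\mc{A}^{\dg}$ yields $\|\mc{A}^{\dg}\|\leq 2w(\mc{A}^{\dg})$. Multiplying these two bounds gives
\begin{equation*}
\|\mc{A}\|\,\|\mc{A}^{\dg}\|\leq 2w(\mc{A})\cdot 2w(\mc{A}^{\dg})=4\,w(\mc{A})\,w(\mc{A}^{\dg}),
\end{equation*}
which is exactly the upper bound. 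This part is essentially free once Theorem \ref{thm:bscnrineq} is in hand.

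For the lower bound $1\leq\|\mc{A}\|\,\|\mc{A}^{\dg}\|$, the idea is to exploit the first Moore--Penrose equation \eqref{mpeq1}, which in the even-order square setting reads $\mc{A}\n\mc{A}^{\dg}\n\mc{A}=\mc{A}$. Applying the spectral norm to both sides and using submultiplicativity twice, I would estimate
\begin{equation*}
\|\mc{A}\|=\|\mc{A}\n\mc{A}^{\dg}\n\mc{A}\|\leq\|\mc{A}\|\,\|\mc{A}^{\dg}\|\,\|\mc{A}\|=\|\mc{A}\|^{2}\,\|\mc{A}^{\dg}\|.
\end{equation*}
Since $\mc{A}\neq\mc{O}$, we have $\|\mc{A}\|>0$, so dividing through by $\|\mc{A}\|$ leaves $1\leq\|\mc{A}\|\,\|\mc{A}^{\dg}\|$, completing the chain.

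The one ingredient that is not literally spelled out earlier is submultiplicativity of the spectral norm, i.e. $\|\mc{A}\n\mc{B}\|\leq\|\mc{A}\|\,\|\mc{B}\|$, so I expect that to be the main (though modest) obstacle. I would justify it directly from the operator-norm definition $\|\mc{A}\|=\sup\{\|\mc{A}\n\mc{X}\|:\|\mc{X}\|=1\}$ given just before Theorem \ref{thm:bscnrineq}: for any unit tensor $\mc{X}$ one has $\|\mc{A}\n\mc{B}\n\mc{X}\|=\|\mc{A}\n(\mc{B}\n\mc{X})\|\leq\|\mc{A}\|\,\|\mc{B}\n\mc{X}\|\leq\|\mc{A}\|\,\|\mc{B}\|$, and taking the supremum over $\mc{X}$ gives the claim; the associativity of $\n$ invoked here is the associative law stated in the introduction. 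With submultiplicativity established, both inequalities follow as above, and I would remark in passing that $\mc{A}\neq\mc{O}$ is used only to guarantee $\|\mc{A}\|>0$ so that the division in the lower-bound step is legitimate.
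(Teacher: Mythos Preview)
Your argument is correct. The paper actually states Theorem \ref{numrclrdsineqa} without proof (it proceeds directly to a numerical example), so there is no paper proof to compare against; your derivation---the upper bound from Theorem \ref{thm:bscnrineq} applied to $\mc{A}$ and $\mc{A}^{\dg}$, and the lower bound from $\mc{A}\n\mc{A}^{\dg}\n\mc{A}=\mc{A}$ together with submultiplicativity of the spectral norm---is the natural one and would serve as the omitted proof.
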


Next, we provide an example to verify the above inequality.
\begin{example}
Consider a tensor $\mc{A}\in \mb{R}^{2\times 2\times 2\times 2}$ such that
\begin{center}
    \begin{tabular}{cc|cc|cc|cc}
    \hline
        \multicolumn{2}{c}{$\mc{A}(:,:,1,1)$} & \multicolumn{2}{c}{$\mc{A}(:,:,2,1)$} & \multicolumn{2}{c}{$\mc{A}(:,:,1,2)$} & \multicolumn{2}{c}{$\mc{A}(:,:,2,2)$}  \\
        \hline
        $2$ & $5$ & $7$ & $9$ & $0$& $11$ & $1$& $-1$\\
        $-5$ & $0$ & $5$ & $7$ & $4$ & $8$ & $9$ & $2$\\
        \hline
    \end{tabular}.
\end{center}
Then, the Moore--Penrose inverse of $\mc{A}$, $\mc{A}^{\dg}\in\mb{R}^{2\times 2\times 2\times 2}$, is
\begin{center}
    \begin{tabular}{cc|cc|cc|cc}
    \hline
        \multicolumn{2}{c}{$\mc{A}(:,:,1,1)$} & \multicolumn{2}{c}{$\mc{A}(:,:,2,1)$} & \multicolumn{2}{c}{$\mc{A}(:,:,1,2)$} & \multicolumn{2}{c}{$\mc{A}(:,:,2,2)$}  \\
        \hline
        $-0.0044$ & $-0.1223$ & $0.1790$ & $0.0131$ & $0.3808$& $0.0620$ & $-0.6131$& $0.0332$\\
        $0.1485$ & $-0.0306$ & $-0.0873$ & $0.2533$ & $-0.1467$ & $0.2655$ & $0.2454$ & $-0.4917$\\
        \hline
    \end{tabular}.
\end{center}
Here, $\|\mc{A}\|=19.9331,\|\mc{A}^{\dg}\|=1.0076, w(\mc{A})=18.9853$ and $w(\mc{A}^{\dg})=0.8253$. Thus, $ 1\leq \|\mc{A}\|\|\mc{A}^{\dg}\|\leq 4w(\mc{A})w(\mc{A}^{\dg})$ holds.
\end{example}

\bibliographystyle{amsplain}

\begin{thebibliography}{10}

\bibitem{axelsson1994}{Axelsson, O.; Lu, H.; Polman, B.},
{\it On the numerical radius of matrices and its application to iterative solution methods},
Linear Multilinear Algebra,
37 (1994) 225-238.
\bibitem{behera2017}{Behara, R.; Mishra, D.},
{\it Further results on generalized inverses of tensors via the Einstein product},
Linear Multilinear Algebra.,
65 (2017) 1662-1682.



\bibitem{berger1965}{Berger, C. A.},
{\it On the numerical range of powers of an operator},
Notices of the American Mathematical Society, 
12 (1965) p. 590. 

\bibitem{bonsall1971}{Bonsall,  F. F.; Duncan, J.}, 
{\it Numerical Ranges I},
Cambridge University Press, Cambridge, 1971.

\bibitem{bonsall1973}{Bonsall,  F. F.; Duncan, J.},
{\it Numerical Ranges II},
Cambridge University Press, Cambridge, 1973.

\bibitem{brazell2013}{Brazell, M.; Li, N.; Navasca, C.; Tamon, C.},
\textit{Solving multilinear systems via tensor inversion},
SIAM J. Matrix Anal. Appl.,
34 (2013) 542-570.




\bibitem{cheng1999}{Cheng, S.-H.; Higham, N. J.},
{\it The nearest definite pair for the Hermitian generalized eigenvalue problem},
Linear Algebra Appl.,
302-303 (1999) 63-76. 



\bibitem{eiermann1993}{Eiermann, M.},
{\it Field of values and iterative methods},
Linear Algebra Appl.,
180 (1993) 167-197. 

\bibitem{einstein2007} {Einstein, A.},
{\it The foundation of the general theory of relativity.} In: Kox AJ, Klein MJ, Schulmann R, editors. The collected papers of Albert Einstein 6. Princeton (NJ): Princeton University Press;  2007.p. 146-200.

\bibitem{fiedler1995}{Fiedler, M.},
{\it Numerical range of matrices and Levinger's theorem},
Linear Algebra Appl.,
220 (1995) 171-180. 

\bibitem{goldberg1982}{Goldberg, M.; Tadmor, E.},
{\it On the numerical radius and its applications},
Linear Algebra Appl.,
42 (1982) 263-284. 

\bibitem{halmos1967}{Halmos, P. R.},
{\it A Hilbert Space Problem Book},
Von Nostrand, New York, 1967.


\bibitem{horn1991}{Horn, R. A.; Johnson, C. R.},
{\it Topics in Matrix Analysis},
Cambridge University Press, Cambridge, 1991.

\bibitem{huang2020}{Huang, B.; Li, W.},
{\it Numerical subspace algorithms for solving the tensor equations involving Einstein product},
Numer. Linear Algebra Appl.,
28 (2021), Article number: e2351.

\bibitem{ke2016}{Ke, R.; Li, W.; Ng, M. K.},
{\it Numerical ranges of tensors},
Linear Algebra Appl., 
508 (2016) 100-132.

\bibitem{kirkland2001}{Kirkland, S.; Psarrakos,  P.; Tsatsomeros,  M.},
{\it On the location of the spectrum of hypertournament matrices}, 
Linear Algebra Appl.,
323 (2001) 37-49. 

\bibitem{lai2009}{Lai, W. M.;  Rubin, D.;  Krempl, E.},
{\it Introduction to Continuum Mechanics},
Butterworth-Heinemann, Oxford, 2009.

\bibitem{liang2019}{Liang, M.; Zheng, B.},
{\it Further results on Moore--Penrose inverses of tensors with application to tensor nearness problems},
Comput. Math. Appl.,
77 (2019) 1282–1293. 

 
\bibitem{liang2019.1}{Liang, M.; Zheng, B.; Zhao, R.},
{\it Tensor inversion and its application to the tensor equations with Einstein product},
Linear Multilinear Algebra.,
67 (2019) 843–870. 

\bibitem{maroulas2002}{Maroulas, J.; Psarrakos, P.; Tsatsomeros, M.},
{\it Perron-Frobenius type results on the numerical range},
Linear Algebra Appl.,
348 (2002) 49-62.


\bibitem{panigrahy2018}{Panigrahy, K.; Mishra, D.},
\textit{Extension  of  Moore--Penrose Inverse of  Tensor via Einstein product},  Linear Multilinear Algebra (2020),
DOI: 10.1080/03081087.2020.1748848.

\bibitem{pakmanesh2021}{Pakmanesh, M.; Afshin, H.},
{\it Numerical ranges of even-order tensor}, 
Banach J. Math. Anal.,
15 (2021)  Article number: 59. 

\bibitem{pearcy1960}{Pearcy, C.},
{\it An elementary proof of the power inequality for the numerical radius},
Michigan Math. J.,
13 (1960) 289-291. 

\bibitem{psarrakos2002}{Psarrakos, P.; Tsatsomeros, M.},
{\it On the stability radius of matrix polynomials},
Linear Multilinear Algebra.,
50 (2002) 151-165.

\bibitem{stanimirovic2018}{Stanimirovi\'c, P. S.; \'Ciri\'c, M.; Katsikis, V. N.;  Li, C.; Ma, H.},
\textit{Outer and (b,c) inverses of tensors},
Linear Multilinear Algebra.,
68 (2020)  940-971.

\bibitem{sun2016}{Sun, L.; Zheng, B.; Bu, C.; Wei, Y.},
\textit{Moore--Penrose inverse of tensors via Einstein product},
Linear Multilinear Algebra.,
64  (2016) 686-698.




\end{thebibliography}

\end{document}